\documentclass[review,onefignum,onetabnum]{siamart190516}

\usepackage{latexsym}
\usepackage{verbatim}
\usepackage{cite}
\usepackage[english]{babel}
\usepackage[latin1]{inputenc}
\usepackage{enumitem}
\usepackage{pmat}
\usepackage{amsmath}
\newtheorem{appxlem}{Lemma}[section]
\usepackage{url}
\usepackage{tikz}
\usetikzlibrary{tikzmark}
\usepackage{amsfonts}
\usepackage{amssymb}
\usepackage{amsbsy}

\usepackage{bm,palatino}

    \let\leq\leqslant
    \let\geq\geqslant
    \let\emptyset\varnothing
   
\makeatletter
\newsavebox\myboxA
\newsavebox\myboxB
\newlength\mylenA
\newcommand*\widebar[2][0.7]{%
    \sbox{\myboxA}{$\m@th#2$}%
    \setbox\myboxB\null
    \ht\myboxB=\ht\myboxA%
    \dp\myboxB=\dp\myboxA%
    \wd\myboxB=#1\wd\myboxA
    \sbox\myboxB{$\m@th\overline{\copy\myboxB}$}
    \setlength\mylenA{\the\wd\myboxA}
    \addtolength\mylenA{-\the\wd\myboxB}%
    \ifdim\wd\myboxB<\wd\myboxA%
       \rlap{\hskip 0.5\mylenA\usebox\myboxB}{\usebox\myboxA}%
    \else
        \hskip -0.5\mylenA\rlap{\usebox\myboxA}{\hskip 0.5\mylenA\usebox\myboxB}%
    \fi}
\makeatother


%
%

\DeclareMathOperator{\im}{im}

\DeclareMathOperator{\rank}{rank}

\let\leq\leqslant
\let\geq\geqslant
\let\emptyset\varnothing


\newcommand{\calF}{\ensuremath{\mathcal{F}}}

\newcommand{\calL}{\ensuremath{\mathcal{L}}}

\newcommand{\calN}{\ensuremath{\mathcal{N}}}

\newcommand{\calS}{\ensuremath{\mathcal{S}}}

\newcommand{\calV}{\ensuremath{\mathcal{V}}}

\newcommand{\calZ}{\ensuremath{\mathcal{Z}}}





\newcommand{\barZ}{\ensuremath{\bar{Z}}}


\newcommand{\bmat}{\begin{matrix}}
\newcommand{\emat}{\end{matrix}}
\newcommand{\bbm}{\begin{bmatrix}}
\newcommand{\ebm}{\end{bmatrix}}
\newcommand{\bbma}{\begin{bmatrix*}[r]}
\newcommand{\ebma}{\end{bmatrix*}}
\newcommand{\bpm}{\begin{pmatrix}}
\newcommand{\epm}{\end{pmatrix}}
\newcommand{\bvm}{\begin{vmatrix}}
\newcommand{\evm}{\end{vmatrix}}
\newcommand{\bse}{\begin{subequations}}
\newcommand{\ese}{\end{subequations}}
\newcommand{\beq}{\begin{equation}}
\newcommand{\eeq}{\end{equation}}
\newcommand{\ben}{\renewcommand{\labelenumi}{\arabic{enumi}.}
\renewcommand{\theenumi}{\arabic{enumi}}\begin{enumerate}}

\newcommand{\een}{\end{enumerate}}

\newcommand{\beni}{\renewcommand{\labelenumi}{\roman{enumi}.}
\renewcommand{\theenumi}{\roman{enumi}}\begin{enumerate}}

\newcommand{\eeni}{\end{enumerate}}

\newcommand{\bena}{\renewcommand{\labelenumi}{\alph{enumi}.}
\renewcommand{\theenumi}{\alph{enumi}}\begin{enumerate}}

\newcommand{\eena}{\end{enumerate}}

\newcommand{\bit}{\begin{itemize}}
\newcommand{\eit}{\end{itemize}}
\newcommand{\bthe}{\begin{theorem}}
\newcommand{\ethe}{\end{theorem}}
\newcommand{\blem}{\begin{lemma}}
\newcommand{\elem}{\end{lemma}}
\newcommand{\bprop}{\begin{proposition}}
\newcommand{\eprop}{\end{proposition}}
\newcommand{\bex}{\begin{example}}
\newcommand{\eex}{\end{example}}
\newcommand{\bas}{\begin{assumption}}
\newcommand{\eas}{\end{assumption}}
\newcommand{\bre}{\begin{remark}}
\newcommand{\ere}{\end{remark}}
\newcommand{\bcor}{\begin{corollary}}
\newcommand{\ecor}{\end{corollary}}
\newcommand{\bdfn}{\begin{definition}}
\newcommand{\edfn}{\end{definition}}
\newcommand{\bcon}{\begin{conjecture}}
\newcommand{\econ}{\end{conjecture}}


\newcommand{\ones}{\ensuremath{1\!\!1}}

\newcommand{\half}{\ensuremath{\frac{1}{2}}}
\newcommand{\inv}{\ensuremath{^{-1}}}

\newcommand{\set}[2]{\ensuremath{\{#1\mid #2\}}}

\newcommand{\norm}[1]{\ensuremath{\| #1 \|}}

\newcommand{\R}{\ensuremath{\mathbb R}}

\newcommand{\Z}{\ensuremath{\mathbb Z}}




\newcommand{\qand}{\quad\text{ and }\quad}

\newcommand{\bmu}{\bm{u}}
\newcommand{\bmx}{\bm{x}}
\newcommand{\bmv}{\bm{v}}
\newcommand{\bmw}{\bm{w}}
\newcommand{\bmy}{\bm{y}}

\newcommand{\bmf}{\bm{f}}

\newcounter{todocounter}
\setcounter{todocounter}{0}

\usepackage[colorinlistoftodos]{todonotes}

\newtheorem{example}[theorem]{Example}
\newtheorem{remark}[theorem]{Remark}

\newcommand{\bpi}{\bm{\Pi}}
\newcommand{\mpi}{\bbm \Pi_{11}&\Pi_{12}\\\Pi_{21}&\Pi_{22}\ebm}
\renewcommand{\S}[1]{\mathbb{S}^{#1}}
\newcommand{\gi}{^\dagger}

\renewcommand{\set}[2]{\left\{#1:#2\right\}}

\newcommand{\schur}{\!\mid\!}

\newcommand{\pip}{\Pi_{22}}
\newcommand{\pis}{\Pi\schur\Pi_{22}}
\newcommand{\zs}{\calZ_r(\Pi)}

\DeclareMathOperator*{\argmin}{arg\,min}

\headers{Quadratic matrix inequalities}{H. J. van Waarde, M. K. Camlibel, J. Eising, and H. L. Trentelman}

\title{Quadratic matrix inequalities with applications to data-based control\thanks{Final submission on the 17th of February, 2023.}}
\author{Henk J. van Waarde\thanks{Bernoulli Institute for Mathematics, Computer Science and Artificial Intelligence, University of Groningen, The Netherlands (\email{h.j.van.waarde@rug.nl}, \email{m.k.camlibel@rug.nl},
\email{h.l.trentelman@rug.nl}).} \and M. Kanat Camlibel\footnotemark[2] \and Jaap Eising\thanks{Department of Mechanical and Aerospace Engineering, UC San Diego, USA
(\email{jeising@ucsd.edu}).} \and Harry L. Trentelman\footnotemark[2]}


\begin{document}

\nolinenumbers

\maketitle

\begin{abstract}
This paper studies several problems related to quadratic matrix inequalities (QMI's), i.e., inequalities in the Loewner order involving quadratic functions of matrix variables. In particular, we provide conditions under which the solution set of a QMI is nonempty, convex, bounded, or has nonempty interior. We also provide a parameterization of the solution set of a given QMI. In addition, we state results regarding the image of such sets under linear maps, which characterize a subset of ``structured" solutions to a QMI. Thereafter, we derive matrix versions of the classical S-lemma and Finsler's lemma, that provide conditions under which all solutions to one QMI also satisfy another QMI. The results will be compared to related work in the robust control literature, such as the full block S-procedure and Petersen's lemma, and it is demonstrated how existing results can be obtained from the results of this paper as special cases. Finally, we show how the various results for QMI's can be applied to the problem of data-driven stabilization. This problem involves finding a stabilizing feedback controller for an unknown dynamical system influenced by noise on the basis of a finite set of data. We provide general necessary and sufficient conditions for data-based quadratic stabilization. In addition, we demonstrate how to reduce the computational complexity of data-based stabilization by leveraging the aforementioned results. This involves separating the computation of the Lyapunov function and the controller, and also leads to explicit formulas for data-guided feedback gains.
\end{abstract}

\vspace{-5pt}\begin{keywords}
  Quadratic matrix inequalities, data-driven control, robust control.
\end{keywords}

\vspace{-5pt}\begin{AMS}
93B51, 93B52, 93D21  
\end{AMS}

\vspace{-10pt}\section{Introduction}

Designing control laws directly using measured data is becoming an increasingly prominent problem in systems and control \cite{Hjalmarsson1994,
Hjalmarsson1998,
Campi2002,
Willems2005,
Markovsky2008,
Campestrini2017,
Maupong2017,
Dai2018,
Baggio2019,
Coulson2019,
Umenberger2019,
Hewing2020,
Iannelli2020,
Ferizbegovic2020,
Steentjes2021b,
Eising2021,
Eising2022,
vanWaarde2022c}. This ``one shot" paradigm has several potential advantages over the two-step procedure of system identification combined with model-based controller synthesis. For example, direct data-driven control is simpler from a conceptual point of view and could thus be the preferred choice for practitioners. It has also been argued that stabilizing controllers can be obtained from data that lack the richness assumptions typically imposed for system identification \cite{vanWaarde2020}. Despite these advantages, direct data-driven control schemes have their own issues. For example, since system models can be regarded as ``condensed" representations of data, control methods that work directly with high-dimensional data matrices often suffer from a higher computational complexity than their model-based counterparts (see, e.g., \cite{Markovsky2021}). Therefore, the problem of reducing computational complexity of direct methods is very relevant. Another important problem is to obtain (non-asymptotic) guarantees of the performance of direct control schemes, working with a \emph{finite} number of noisy data samples, see e.g. \cite{Dean2019,Cohen2019}.

A fruitful and recent line of work that addresses the latter problem in the context of linear time-invariant (LTI) systems builds on the assumption that the matrix containing the noise samples is the solution to a \emph{quadratic matrix inequality} (QMI) \cite{DePersis2020,
Berberich2019c,
vanWaarde2022,
Koch2021,
vanWaarde2021,
Steentjes2021,
vanWaarde2022b,
Bisoffi2021,
Burohman2021,
Steentjes2022}. Such a QMI can capture, for example, energy bounds on the noise or bounds on the noise sample covariance matrix. These bounds give rise to a set of LTI systems that ``explain" the measured data, in the sense that each system in this set could have generated  the measurements \emph{for some} admissible noise sequence. Because of the assumption on the noise, the set of explaining systems can also be written as the set of solutions to a QMI. In this setting, direct data-driven control boils down to the robust control problem of finding a single controller that controls all systems explaining the data. Different control problems have been addressed in this manner, ranging from stabilization \cite{DePersis2020,Berberich2019c,vanWaarde2022,Bisoffi2021,Steentjes2022}, to $H_{2}$ and $H_{\infty}$ control \cite{Berberich2019c,vanWaarde2022,Steentjes2021}. Also data-based analysis problems such as dissipativity have been studied \cite{Koch2021,vanWaarde2022b}, as well as data-driven reduced order modeling \cite{Burohman2021}. The results are typically phrased in terms of data-dependent linear matrix inequalities (LMI's). A theorem that is of interest in this context is a matrix version of the classical \emph{S-lemma} \cite{Yakubovich1977}, that was developed in \cite{vanWaarde2022}. This matrix S-lemma provides a tractable LMI condition under which all solutions to one QMI also satisfy another QMI. By noting that Lyapunov and dissipation inequalities (with quadratic storage functions) can be regarded as QMI's in the system matrices, the matrix S-lemma can be used to establish conditions under which all systems explaining the data (i.e., solutions of a QMI) also satisfy the performance specifications (i.e., are also solutions to a second QMI).

Despite this insight, we believe that there are certain aspects of QMI's that are not yet completely understood. For example, the matrix S-lemma of \cite{vanWaarde2022} assumes a so-called \emph{generalized Slater condition}, which turns out to not be satisfied in the special case that the data are noise-free. Regardless, the paper \cite{vanWaarde2021} shows that the LMI condition for stabilization in \cite{vanWaarde2022} still holds in the exact data setting. The latter result for exact data was established by deriving a matrix version of \emph{Finsler's lemma}, that provides conditions under which all solutions to a \emph{quadratic matrix equality} satisfy a QMI. The paper \cite{Bisoffi2021} makes use of the so-called \emph{Petersen's lemma}, which enables the removal of the generalized Slater condition. Nonetheless, the results of \cite{Bisoffi2021} are only applicable to full rank data matrices, and to a specific noise model that does not capture all relevant cases (such as cross-covariance bounds, see \cite{Steentjes2022}). 

These observations suggest the existence of a general theory that encapsulates all of the above results. The purpose of this paper is to establish this theory. In fact, one of our main contributions is to derive matrix versions of the S-lemma and Finsler's lemma that do not rely on the generalized Slater condition. We study both the cases of strict and nonstrict QMI's. In the case of a strict inequality, we even show that the matrix Finsler's lemma and S-lemma can be unified to obtain a single, more general result. In addition to these highlights, an important contribution of the paper is to give a full account of properties of quadratic matrix inequalities. In particular, we provide conditions under which the set of solutions to a QMI is nonempty, bounded, convex, or has nonempty interior. We study both strict and nonstrict QMI's, as well as quadratic matrix equalities. Furthermore, we provide parameterizations of all solutions to a QMI. Thereafter, we study the image of solution sets of QMI's under linear maps. This means that we are interested in matrices of the form $ZW$, where $Z$ satisfies a QMI and $W$ is a given matrix. We will show that, under suitable conditions, the set of all these matrices coincides with the solution set of yet another QMI. This result is relevant for data-driven control whenever the noise is contained in a known subspace of the state-space \cite{vanWaarde2022}, or if certain prior knowledge of the system matrices is given \cite{Berberich2020d}. Finally, we will study applications of the various results on QMI's in the context of data-driven control. First, we will provide a general necessary and sufficient condition for data-based quadratic stabilization, using the unified matrix S-lemma and Finsler's lemma. Subsequently, we apply the results regarding linear maps to reduce the computational complexity of data-based stabilization. In particular, we completely separate the computation of the quadratic Lyapunov function and the controller, which leads to lower-dimensional LMI's. We also provide an explicit formula for a suitable stabilizing controller, given a (pre-computable) Lyapunov function. In the final part of the paper, we apply our results to resolve more intricate problems. First, we consider the nonlinear class of Lur'e systems, for which we derive stabilization results. After this, we consider the situation of linear systems with Gaussian noise. Our results will lead to conditions on the data which allow us to design a controller that has a given probability of stabilizing the true system.

Throughout the paper we will compare our work to related results in the literature. For instance, in Section~\ref{s:parameterization} we show the relation between \cite[Cor. 2.3.6]{Skelton1998} and the QMI parameterization result. In Section~\ref{s:comparison}, we compare the matrix S-lemma of this paper with a matrix S-lemma that can be derived from the full block S-procedure and the literature on LMI relaxations \cite{Scherer2001,Scherer2005}. The conclusion is that the latter literature is able to deal with more general uncertainty descriptions than the one in this paper. Nonetheless, in the context of uncertainty described by QMI's, the matrix S-lemma derived from \cite{Scherer2001,Scherer2005} follows from this paper. We also show how the matrix S-lemma of this paper improves that of \cite{vanWaarde2022} by removing the generalized Slater condition. Finally, we show that both the strict and nonstrict Petersen's lemma \cite{Petersen1987,Petersen1986}, that were recently applied to data-driven control \cite{Bisoffi2021}, can be obtained from our results. 

The outline is as follows. In Section~\ref{s:motivation} we motivate our study of QMI's with the application of data-based stabilization. Next, in Section~\ref{s:sets} we study basic properties of solution sets of QMI's, such as boundedness and convexity, and we establish the parameterization and results on images of solution sets of QMI's. Section~\ref{s:FinslerandSlemma} provides matrix versions of the S-lemma and Finsler's lemma. The results are applied to data-driven control in Section~\ref{sec:appl}. Finally, Section~\ref{s:conclusions} contains our conclusions. 

\subsection{Notation}
The \emph{Moore-Penrose pseudo-inverse} of a real matrix $A$ is denoted by $A^\dagger$. The set of real \emph{symmetric} $n \times n$ matrices is denoted by $\mathbb{S}^n$. In what follows, let $A \in \mathbb{S}^n$. The matrix $A$ is called \emph{positive semidefinite} if $x^\top A x \geq 0$ for all $x \in \mathbb{R}^n$ and \emph{positive definite} if $x^\top A x > 0$ for all nonzero $x\in \mathbb{R}^n$. This is denoted by $A \geq 0$ and $A > 0$, respectively. Negative semidefiniteness and negative definiteness is defined in a similar way, and denoted by $A \leq 0$ and $A < 0$, respectively. Throughout the paper, we will only consider definiteness conditions on symmetric matrices. As such, the notation $A \geq 0$ implies symmetry of $A$. In addition, the notation $A \leq B$ means that $A - B \leq 0$ (equivalently, $B-A \geq 0$), and $A < B$ means that $A-B<0$. We denote by $\lambda_{\mathrm{min}}(A)$ and $\lambda_{\mathrm{max}}(A)$ the smallest and largest eigenvalue of $A$, respectively. By \cite[Cor. 8.4.2]{Bernstein2009}, we have that $\lambda_{\mathrm{min}}(A)I\leq A\leq \lambda_{\mathrm{max}}(A)I$. If $A \geq 0$, there is exactly one positive semidefinite \emph{square root} of $A$, denoted by $A^{\frac{1}{2}}$, such that $A = A^{\frac{1}{2}}A^{\frac{1}{2}}$. 

\section{Motivation}
\label{s:motivation}
To demonstrate how quadratic matrix inequalities arise in the context of data-driven control, this section discusses data-based feedback stabilization. Consider input-state systems with unknown process noise of the form
\begin{equation} \label{systemprocessnoise}
\bmx(t+1) = A_s \bmx(t) + B_s \bmu(t) +\bmw(t),
\end{equation}
where $\bmx,\bmw \in \mathbb{R}^n$ and $\bmu \in \mathbb{R}^m$. The matrices $A_s$ and $B_s$ and the noise $\bmw(t)$ are unknown, but samples of the state and input are measured and collected in the matrices
\beq\label{e:def x umin}
X = \begin{bmatrix}
x(0) & x(1) & \cdots & x(T)
\end{bmatrix} \:\:\text{ and }\:\: U_- = \begin{bmatrix}
u(0) & u(1) & \cdots & u(T-1)
\end{bmatrix}.
\eeq
We also define $X_- = \begin{bmatrix}
x(0) & x(1) & \cdots & x(T-1)
\end{bmatrix}$, $X_+ = \begin{bmatrix}
x(1) & x(2) & \cdots & x(T)
\end{bmatrix}$ and $W_- = \begin{bmatrix}
w(0) & w(1) & \cdots & w(T-1)
\end{bmatrix}$. The matrix $W_-$ is not known, but assumed to satisfy a \emph{quadratic matrix inequality}, as explained next. Consider 
\[\Phi = \begin{bmatrix}
    \Phi_{11} & \Phi_{12} \\
    \Phi_{21} & \Phi_{22}
    \end{bmatrix} \in\S{n+T}, \textrm{ where } \Phi_{11} \in \mathbb{S}^n, \Phi_{12} \in \mathbb{R}^{n \times T}, \Phi_{21} \in \mathbb{R}^{T \times n}, \Phi_{22} \in \mathbb{S}^{T}. \]
We assume that the unknown noise $W_-$ satisfies
\begin{equation}
\label{asnoise}
\begin{bmatrix}
    I \\ W_-^\top 
    \end{bmatrix}^\top 
    \Phi
    \begin{bmatrix}
    I \\ W_-^\top 
    \end{bmatrix} \geq 0.
\end{equation}
The above noise model captures, for instance, 
\ben[label=(\roman*),ref=(\roman*),leftmargin=*]
\item\label{i:nm1} {\em energy bounds}: $\Phi_{22} =\! -I$ and $\Phi_{12} =\! 0$  imply $W_- W_-^\top \!=\! \sum_{t=0}^{T-1} w(t) w(t)^\top \!\leq \Phi_{11}$, which means that the energy of $w$ on the time interval $[0,T-1]$ is bounded by $\Phi_{11}$;
\item\label{i:nm2} {\em individual noise sample bounds}: The noise bound \eqref{asnoise} with $\Phi_{22} = -I$, $\Phi_{12} = 0$ and $\Phi_{11} = \epsilon T I$ 
is implied by $\|w(t)\|_2^2 \leq \epsilon\,\,\forall t$. The latter means that the individual noise samples at every time instant are bounded in norm;
\item\label{i:nm3} {\em sample covariance bounds:} The choices $\Phi_{22}\! = \frac{1}{T}(-I + \frac{1}{T}\ones \ones^\top)$ and $\Phi_{12} = 0$ with $\mu := \frac{1}{T} \sum_{t=0}^{T-1} w(t)$ lead to $\frac{1}{T} \sum_{t =0}^{T -1} (w(t) - \mu)  (w(t) - \mu)^\top= \frac{1}{T} W_-(I - \frac{1}{T}\ones \ones^\top)W_-^\top \leq \Phi_{11}$ where $\ones$ denotes the $T$-vector of ones. In other words, the sample covariance matrix of $w$ is bounded by $\Phi_{11}$;
\item\label{i:nm4} {\em bounded noise within a subspace:} Let $E \in \mathbb{R}^{n\times d}$ have full column rank and let $\hat{\Phi} \in \mathbb{S}^{d + T}$. Under suitable conditions, $W_- = E\hat{W}_-$ for some $\hat{W}_-$ satisfying 
$\begin{bmatrix} I \\ \hat{W}_-^\top \end{bmatrix}^\top \hat{\Phi} \begin{bmatrix} I \\ \hat{W}_-^\top \end{bmatrix} \geq 0$ if and only if $W_-$ and $\Phi := 
\begin{bmatrix} E & 0 \\ 0 & I \end{bmatrix} \hat{\Phi} \begin{bmatrix} E^\top & 0 \\ 0 & I \end{bmatrix}$  satisfy~\eqref{asnoise} (see Section~\ref{sec:appl}). This matrix $\Phi$ thus captures the situation that the noise is contained in the subspace $\im E$, and equal to $E\hat{W}_-$, where $\hat{W}_-$ again satisfies a quadratic matrix inequality;
\item\label{i:nm5} {\em exact measurements:} $\Phi_{11}=0$, $\Phi_{12}=0$, and $\Phi_{22}=-I$ leads to $W_- = 0$.
\een

\noindent
The set $\Sigma$ of all systems compatible with the data is given by \vspace{-6pt}
\begin{equation*}
\vspace{-6pt}  \Sigma = \{(A,B) \in \mathbb{R}^{n \times n} \times \mathbb{R}^{n \times m} \mid X_+ = A X_- + B U_- + W_- \text{ for some } 
 W_- \text{ satisfying }\eqref{asnoise}\}.
\end{equation*}
Since the measurements are obtained from the system \eqref{systemprocessnoise}, we clearly have that $(A_s,B_s)\in\Sigma$. Since on the basis of the given data we are not able to distinguish the true system $(A_s,B_s)$ from any other system in $\Sigma$, a stabilizing controller computed using only the data should necessarily stabilize all systems in $\Sigma$. This observation leads to the following notion of \emph{informativity} for quadratic stabilization.
\begin{definition}\label{def:quad stab}
The data $(U_-,X)$ are called \emph{informative for quadratic stabilization} if there exists a feedback gain $K$ and a matrix $P > 0$ such that for all $(A,B) \in \Sigma$:\vspace{-6pt}
\begin{equation}\label{lyapunovineq}
\vspace{-6pt}P - (A+BK) P (A+BK)^\top > 0.
\end{equation}
\end{definition}
Note that $P>0$ satisfies \eqref{lyapunovineq} if and only $Q:= P^{-1}$ satisfies $Q - (A +BK)^\top Q (A +BK) >0$, which expresses that $V(x) = x^T Q x$ is a Lyapunov function for the closed loop system $\bmx(t + 1) = (A + BK)\bmx(t)$.

Finding a controller that stabilizes all systems in a given set is a problem which has been studied under a number of different names in the literature. One of the most common of these is \textit{robust control}, where the set of systems takes the form of a nominal system and a set of perturbations. An overview of such methods pertaining to linear systems can be found in \cite{Green:2012}. More specifically an LMI-based approach to this problem can be found in \cite{Scherer2006}. The problem is also studied under the name of \textit{simultaneous control}, an overview of which can be found in \cite{Petersen:1987,Blondel:1994}. Apart from being studied on its own merits, results regarding sets of systems have been applied in the study of, for instance linear parameter varying systems \cite{Mohammadpour:2012} or switched systems \cite{Lin:07}.

Taking inspiration from the aforementioned works, we are interested in \emph{quadratic stabilization} in the sense that we ask for a \emph{common} Lyapunov matrix $P$ for all $(A,B) \in \Sigma$. This assumption is quite customary in data-driven \cite{DePersis2020,Berberich2019c,vanWaarde2022} and robust \cite{Scherer1999} control, and is made mainly because it leads to tractable solutions. This assumption is, however, often conservative in the sense that there may exist a \emph{parameter-dependent} Lyapunov function in cases that no common Lyapunov function exists. In \cite{vanWaarde2022e}, a first attempt was made to quantify the conservatism of common Lyapunov functions in the context of this paper. 

Recall that $(A,B)\in\Sigma$ if and only if $W_- = X_+ - A X_- -BU_-$, where $W_-$ is such that~\eqref{asnoise} holds. Combining this yields that $(A,B)\in\Sigma$ if and only if $(A,B)$ satisfies\vspace{-6pt}
\begin{equation}\label{ineqAB}
\vspace{-6pt}    \begin{bmatrix}
    I \\ A^\top \\ B^\top 
    \end{bmatrix}^\top 
    \begin{bmatrix}
    I & X_+ \\ 0 & -X_- \\ 0 & -U_-
    \end{bmatrix}
    \begin{bmatrix}
    \Phi_{11} & \Phi_{12} \\
    \Phi_{21} & \Phi_{22}
    \end{bmatrix}
    \begin{bmatrix}
    I & X_+ \\ 0 & -X_- \\ 0 & -U_-
    \end{bmatrix}^\top
    \begin{bmatrix}
    I \\ A^\top \\ B^\top 
    \end{bmatrix} \geq 0.
\end{equation}
In addition, if we fix $K$ and $P$ then \eqref{lyapunovineq} is yet another QMI in $(A,B)$:\vspace{-6pt}
\begin{equation}
\label{ineqABPK}
\vspace{-6pt}    \begin{bmatrix}
    I \\ A^\top \\ B^\top 
    \end{bmatrix}^\top
    \begin{bmatrix}
    P & 0 & 0 \\
    0 & -P & -PK^\top \\
    0 & -KP & -KPK^\top
    \end{bmatrix}
    \begin{bmatrix}
    I \\ A^\top \\ B^\top 
    \end{bmatrix} > 0.
\end{equation}
Therefore, finding conditions for informativity for quadratic stabilization amounts to finding conditions under which there exist $K$ and $P>0$ such that the quadratic matrix inequality \eqref{ineqABPK} holds for all $(A,B)$ satisfying the quadratic matrix inequality \eqref{ineqAB}. In other words, checking informativity requires us to check whether a given QMI implies another one. A generalization of the so-called \emph{S-lemma} \cite{Yakubovich1977,Polik2007} to matrix variables \cite[Thm. 13]{vanWaarde2022} leads to necessary and sufficient conditions for this implication in terms of an LMI.

\bprop
\label{theoremstab}
Assume that $\Phi_{22}<0$ and there exists $\bar{Z} \in \mathbb{R}^{(n+m) \times n}$ such that
\begin{equation}
    \label{matSlater}
\begin{bmatrix} I \\ \bar{Z} \end{bmatrix}^\top     \begin{bmatrix}
    I & X_+ \\ 0 & -X_- \\ 0 & -U_-
    \end{bmatrix}
    \begin{bmatrix}
    \Phi_{11} & \Phi_{12} \\
    \Phi_{21} & \Phi_{22}
    \end{bmatrix}
    \begin{bmatrix}
    I & X_+ \\ 0 & -X_- \\ 0 & -U_-
    \end{bmatrix}^\top\begin{bmatrix} I \\ \bar{Z} \end{bmatrix} > 0.
\end{equation}
Then the data $(U_-,X)$ are informative for quadratic stabilization if and only if there exists an $n\times n$ matrix $P > 0$, an $L \in \mathbb{R}^{m \times n}$ and scalars $\alpha \geq 0$ and $\beta > 0$ satisfying \vspace{-6pt}
\begin{equation}
\small\begin{bmatrix}
   P-\beta I & 0 & 0 & 0 \\
    0 & -P & -L^\top & 0 \\
    0 & -L & 0 & L \\
    0 & 0 & L^\top & P
    \end{bmatrix} - \alpha \begin{bmatrix}
    I & X_+ \\ 0 & -X_- \\ 0 & -U_- \\ 0 & 0
    \end{bmatrix}
    \begin{bmatrix}
    \Phi_{11} & \Phi_{12} \\
    \Phi_{21} & \Phi_{22}
    \end{bmatrix}
    \begin{bmatrix}
    I & X_+ \\ 0 & -X_- \\ 0 & -U_- \\ 0 & 0
    \end{bmatrix}^\top \geq 0.\normalsize \label{LMIstabmotivation}
\end{equation}
Moreover, if $P$ and $L$ satisfy \eqref{LMIstabmotivation} then $K := L P^{-1}$ is a stabilizing gain for all $(A,B) \in \Sigma$.
\eprop

Similar data-driven methods have also been provided in \cite{vanWaarde2022} for $H_2$ and $H_\infty$ controller design. However, these results cannot be applied to certain relevant noise models of the form \eqref{asnoise} because the assumptions $\Phi_{22} < 0$ and \eqref{matSlater} of Proposition~\ref{theoremstab} fail to hold. Indeed, the inequality $\Phi_{22} < 0$ is not satisfied by the noise model \ref{i:nm3} whereas \eqref{matSlater} is not satisfied by \ref{i:nm4} and \ref{i:nm5}. Nevertheless, it was proven in \cite[Thm. 2]{vanWaarde2021} that feasibility of \eqref{LMIstabmotivation} is still necessary and sufficient for informativity with exact data, i.e. noise model \ref{i:nm5}. For this noise model, $\Phi$ is negative semidefinite and hence \eqref{ineqAB} boils down to an equality. Therefore, to tackle the noiseless data case a matrix version of Finsler's lemma was developed in \cite[Thm. 1]{vanWaarde2021}. Further, \cite{Steentjes2022} considers a noise model capturing cross covariance bounds for which $\Phi_{22}$ is not necessarily negative definite and thus Proposition~\ref{theoremstab} cannot be directly applied. Under certain rank conditions on the data and for the noise model \ref{i:nm1}, the paper \cite{Bisoffi2021} used the Petersen's lemma to remove the condition \eqref{matSlater}. 

In this paper, we will provide an in-depth study of QMI's in a general framework which covers each of the noise models \ref{i:nm1}-\ref{i:nm5}. The results mentioned above can be recovered from the more general theory of this paper.

\section{Sets induced by quadratic matrix inequalities}
\label{s:sets}

In this section, we are interested in properties of sets defined in terms of quadratic matrix inequalities. We begin with sets of the form
$$
\calZ_{r}(\Pi):=\set{Z\in\R^{r\times q}}{\bbm I_q\\Z\ebm^\top\Pi\bbm I_q\\Z\ebm\geq 0},
$$
for $\Pi\in\S{q+r}$. The very first question one may ask is under what conditions on $\Pi$ the set $\calZ_{r}(\Pi)$ is nonempty. An immediate necessary condition is that $\Pi$ must have at least $q$ nonnegative eigenvalues. Clearly, this is not sufficient in general. What is necessary and sufficient for $Z\in\calZ_{r}(\Pi)$ is that the matrix\vspace{-5pt}
\beq\label{e: Phi Z matrix}
\small\bbm
\Pi_{11}&\Pi_{12}&-Z^\top\\\Pi_{21}&\Pi_{22}&I\\
-Z & I & 0
\ebm\normalsize, \quad \textrm{ where} \quad \Pi=\mpi
\eeq
has exactly $r$ negative eigenvalues (see \cite[Fact 8.15.28]{Bernstein2009}). From now on, whenever we partition a matrix $\Pi\in\S{q+r}$, we assume that $\Pi_{11}$ is $q \times q$ and $\Pi_{22}$ is $r \times r$. The condition on the eigenvalues of \eqref{e: Phi Z matrix} does not translate to an easily verifiable condition on $\Pi$ for nonemptiness of $\calZ_{r}(\Pi)$. Nevertheless, it leads to a noteworthy dualization result. To state this result, for given $\calS\subseteq\R^{r\times q}$, we define $\calS^\top:=\set{Z^\top}{Z\in\calS}$.
\vspace{-5pt}\bprop
Let $\Pi\in\S{q+r}$ be nonsingular and $\calZ_{r}(\Pi)\neq\emptyset$. Then, \vspace{-5pt} \[\vspace{-8pt}\big(\calZ_{r}(\Pi)\big)^\top=\calZ_{q}(\Pi^{\sharp}_{r,q}), \textrm{ where } \Pi^{\sharp}_{r,q}:=\bbm0&-I_r\\I_q&0\ebm\Pi\inv\bbm0&-I_q\\I_r&0\ebm.\] 
\eprop
A proof can be found in \cite[Lem. 4.9]{Scherer1999}. An alternative proof was given in \cite[Lem. 3]{vanWaarde2022b} based on the Schur complements of \eqref{e: Phi Z matrix} with respect to $\Pi$ and 
$
\bbm \Pi_{22}&I\\I & 0\ebm
$.

It turns out that for particular matrices $\Pi$, a Schur complement argument on the matrix $\Pi$ itself leads to a simple characterization of nonemptiness of the set $\calZ_{r}(\Pi)$. Specifically, suppose that $\Pi_{22}\leq 0$ and $\ker\Pi_{22}\subseteq\ker\Pi_{12}$. Since the latter condition is equivalent to $\Pi_{12}\Pi_{22}\Pi_{22}\gi=\Pi_{12}$, we have that\vspace{-5pt}
\begin{equation}\label{e:gen-schur}
\vspace{-5pt}\mpi=\bbm I_q & \Pi_{12}\Pi_{22}\gi\\0&I_r\ebm\bbm \Pi\schur\Pi_{22}& 0 \\0 & \Pi_{22}\ebm\bbm I_q & 0\\\Pi_{22}\gi\Pi_{21} & I_r\ebm
\end{equation}
where $\Pi\schur\Pi_{22}:=\Pi_{11}-\Pi_{12}\Pi_{22}\gi\Pi_{21}$ is the (generalized) Schur complement of $\Pi$ with respect to $\Pi_{22}$. This results in \vspace{-5pt}
\begin{equation}\label{e:expression for ztpiz}
\vspace{-5pt}\begin{bmatrix}
I_q \\ Z
\end{bmatrix}^\top
\mpi
\begin{bmatrix}
I_q \\ Z
\end{bmatrix}
=
\Pi\schur\Pi_{22}+(Z+\Pi_{22}\gi\Pi_{21})^\top\Pi_{22}(Z+\Pi_{22}\gi\Pi_{21})
\end{equation}
because $(\Pi_{22}^\dagger)^\top = (\Pi_{22}^\top)^\dagger = \Pi_{22}^\dagger$ by symmetry of $\Pi_{22}$ and, since $\Pi_{22} \leq 0$,\vspace{-5pt}
\begin{equation}\label{e:obs gen schur}
\vspace{-5pt}\Pi\schur\Pi_{22}=
\begin{bmatrix}
I_q \\ -\Pi_{22}\gi\Pi_{21}
\end{bmatrix}^\top
\mpi
\begin{bmatrix}
I_q \\ -\Pi_{22}\gi\Pi_{21}
\end{bmatrix}
\geq \begin{bmatrix}
I_q \\ Z
\end{bmatrix}^\top
\mpi
\begin{bmatrix}
I_q \\ Z
\end{bmatrix}
\end{equation}
for any $Z \in \mathbb{R}^{r\times q}$. An immediate consequence of this inequality is that $\calZ_{r}(\Pi)\neq\emptyset$ if and only if $\Pi\schur\Pi_{22}\geq 0$. Motivated by this observation, we define the set\vspace{-5pt}
\begin{equation}
\label{Piqr}
\vspace{-5pt}\bpi_{q,r}=\set{\mpi\in\S{q+r}}{\Pi_{22}\leq 0, \Pi\schur\Pi_{22} \geq 0 \text{ and }\ker\Pi_{22}\subseteq\ker\Pi_{12}}.
\end{equation}
All the noise models mentioned after \eqref{asnoise} can be captured by elements of $\bpi_{n,T}$.

Next, for $\Pi\in\S{q+r}$ we will investigate properties of $\calZ_{r}(\Pi)$ and the sets\vspace{-5pt}
\[\vspace{-5pt}\calZ_r^+(\Pi) := \left\{\! Z \in \mathbb{R}^{r\times q} : 
\begin{bmatrix}I_q \\ Z\end{bmatrix}^\top\!\! \Pi \begin{bmatrix}I_q \\ Z\end{bmatrix}\!\! > 0 \right\}, \calZ_{r}^0(\Pi):=\set{\!Z\in\R^{r\times q}}{\bbm I_q\\Z\ebm^\top\!\!\Pi\bbm I_q\\Z\ebm \!\!= 0}\!\!,\]

\subsection{Basic properties}
\label{s:basicproperties}

In the following theorem, we study nonemptiness, convexity and boundedness of the sets induced by QMI's. 
\bthe\label{t:basic props}
Let $\Pi\in\bpi_{q,r}$. Then, $\calZ_r(\Pi)$ 
\begin{enumerate}[label=\emph{(\alph*)}, ref=\alph*]
\item\label{t:basic props.1} is nonempty and convex.
\item\label{t:basic props.2} is bounded if and only if $\Pi_{22}<0$.
\item\label{t:basic props.3} has nonempty interior if and only if $\Pi_{22}=0$ or $\Pi\schur\Pi_{22}>0$ .
\end{enumerate}
Further,  
\begin{enumerate}[resume,label=\emph{(\alph*)}, ref=\alph*]
\item\label{t:basic props.4} $\calZ^+_{r}(\Pi)$ is nonempty if and only if $\Pi\schur\Pi_{22}>0$.
\item\label{t:basic props.5} $\calZ^0_{r}(\Pi)$ is nonempty if and only if $\rank\Pi_{22}\geq\rank \Pi\schur\Pi_{22}$.
\end{enumerate}
\ethe

\begin{proof}
\eqref{t:basic props.1}: Since $\Pi\schur\Pi_{22}\geq 0$, it follows from \eqref{e:obs gen schur} that $-\Pi_{22}\gi\Pi_{21}\in\calZ_r(\Pi)$. This proves nonemptiness whereas convexity readily follows from $\Pi_{22}\leq 0$. 

\eqref{t:basic props.2}: We first prove the `if' part. Let $Z\in\calZ_r(\Pi)$. Then, it follows from \eqref{e:expression for ztpiz} that $
(Z+\Pi_{22}\gi\Pi_{21})^\top(-\Pi_{22})(Z+\Pi_{22}\gi\Pi_{21})\leq \Pi\schur\Pi_{22}$. This leads to $
\lambda_{\mathrm{min}}(-\Pi_{22})(Z+\Pi_{22}\gi\Pi_{21})^\top(Z+\Pi_{22}\gi\Pi_{21}) \leq \lambda_{\mathrm{max}}(\Pi\schur\Pi_{22})I$. 
Since $-\Pi_{22}>0$ and $\Pi\schur\Pi_{22}\geq 0$, we see that $\norm{Z+\Pi_{22}\gi\Pi_{21}}\leq \alpha$ for some $\alpha\geq 0$. Hence, $\calZ_r(\Pi)$ is bounded.

For the `only if' part, let $Z\in\calZ_r(\Pi)$ and let $\xi$ be such that $\Pi_{22}\xi=0$. Since $\Pi \in \bpi_{q,r}$, we see that $Z+\alpha\xi\xi^\top\in\calZ_r(\Pi)$ for any $\alpha \in \mathbb{R}$. Since $\calZ_r(\Pi)$ is bounded, this implies $\xi=0$. This proves that $\Pi_{22}$ is nonsingular. Thus $\Pi_{22}\leq 0$ implies $\Pi_{22}<0$.

\eqref{t:basic props.3}: For the `if' part, let $\Delta\in\R^{r\times q}$ be such that $\norm{\Delta}\leq 1$. For all $\epsilon>0$, we have\vspace{-5pt}
\begin{align*}
\pis+\epsilon^2\Delta^\top\pip\Delta 
&\geq\lambda_{\mathrm{min}}(\pis)I+\epsilon^2\lambda_{\mathrm{min}}(\pip)\Delta^\top\Delta \\
&\geq\lambda_{\mathrm{min}}(\pis)I+\epsilon^2\lambda_{\mathrm{min}}(\pip)I  \end{align*}
where the last inequality follows from the facts that $\pip\leq 0$ and $\Delta^\top\Delta\leq I$. If $\pip=0$, then the right hand side is nonnegative for any $\epsilon$ since $\pis\geq 0$. If $\pis>0$, then the right hand side is nonnegative for all sufficiently small $\epsilon>0$. Therefore, there exists $\epsilon>0$ such that $\pis+\epsilon^2\Delta^\top\pip\Delta\geq 0$ for all $\Delta$ with $\norm{\Delta}\leq 1$. Now, take $Z_0=-\Pi_{22}\gi\Pi_{21}$ and note that this implies \vspace{-5pt}
\[\vspace{-5pt}\Pi\schur\Pi_{22}+(Z_0+\epsilon\Delta+\Pi_{22}\gi\Pi_{21})^\top\Pi_{22}(Z_0+\epsilon\Delta+\Pi_{22}\gi\Pi_{21})=\pis+\epsilon^2\Delta^\top\pip\Delta\geq 0\]
for all $\Delta$ with $\norm{\Delta}\leq1$. Then, it follows from \eqref{e:expression for ztpiz} that $Z_0+\epsilon\Delta\in\zs$ for all $\Delta$ with $\norm{\Delta}\leq1$. This means that the set $\zs$ has nonempty interior.

For the `only if' part, suppose that $Z_0$ is in the interior of $\zs$. This means that there exists $\epsilon>0$ such that $Z_0+\epsilon\Delta\in\zs$ for all $\Delta$ with $\norm{\Delta}\leq1$. By \eqref{e:expression for ztpiz},\vspace{-5pt} 
\begin{equation}\label{e:nonempty interior 2}
\vspace{-5pt}\Pi\schur\Pi_{22}+(Z_0+\epsilon\Delta+\Pi_{22}\gi\Pi_{21})^\top\Pi_{22}(Z_0+\epsilon\Delta+\Pi_{22}\gi\Pi_{21})\geq 0.
\end{equation}
Suppose that $\xi\in\R^q$ is such that $(\pis)\xi=0$. Since $\pip\leq 0$, \eqref{e:nonempty interior 2} yields the equation $\Pi_{22}(Z_0+\epsilon\Delta+\Pi_{22}\gi\Pi_{21})\xi=0
$ for all $\Delta$ with $\norm{\Delta}\leq1$. By taking $\Delta=0$, we see that $\Pi_{22}(Z_0+\Pi_{22}\gi\Pi_{21})\xi=0$. Therefore, $\Pi_{22}\Delta\xi=0$ for all $\Delta$ with $\norm{\Delta}\leq1$. In particular, consider $\Delta = \zeta\xi^\top$. Then, we conclude that $\Pi_{22}\zeta\xi^\top\xi=0$ for all $\zeta\in\R^r$. Therefore, either $\pip=0$ or $\xi=0$. Equivalently, either $\pip=0$ or $\pis>0$.

\eqref{t:basic props.4}: For the `if' part, suppose that $\Pi\schur\Pi_{22}> 0$. Then, it follows from \eqref{e:obs gen schur} that $-\Pi_{22}\gi\Pi_{21}\in\calZ^+_r(\Pi)$. Thus, $\calZ^+_r(\Pi)$ is nonempty. For the `only if' part, suppose that $\calZ^+_r(\Pi)$ is nonempty. Let $Z\in\calZ^+_r(\Pi)$. Then, \eqref{e:obs gen schur} implies that
$\Pi\schur\Pi_{22}>0$.

\eqref{t:basic props.5}: For the `only if' part, suppose that $\calZ^0_r(\Pi)$ is nonempty. Let $Z\in\calZ^0_r(\Pi)$. Then, it follows from \eqref{e:expression for ztpiz} that $\Pi\schur\Pi_{22}=-(Z+\Pi_{22}\gi\Pi_{21})^\top\Pi_{22}(Z+\Pi_{22}\gi\Pi_{21})$. Since the rank of a product of matrices is less than or equal to the ranks of individual matrices, we see that $\rank\Pi_{22}\geq\rank (\Pi\schur\Pi_{22})$. For the `if' part, suppose that $\rank\Pi_{22}\geq\rank (\Pi\schur\Pi_{22})$. Let $U_1\Sigma_1U_1^\top$ and $U_2\Sigma_2U_2^\top$ be singular value decompositions of $\Pi\schur\Pi_{22}$ and $-\Pi_{22}$, respectively. Then, $\rank\Sigma_2\geq\rank\Sigma_1$. Hence, there exists a diagonal matrix $D\geq 0$ such that $\Sigma_1=D\Sigma_2$. Take $\barZ=-\Pi_{22}\gi\Pi_{21}+U_2D^{\half}U_1^T$. Note that $(\barZ+\Pi_{22}\gi\Pi_{21})^\top\Pi_{22}(\barZ+\Pi_{22}\gi\Pi_{21})=-U_1\Sigma_1U_1^\top=-\Pi\schur\Pi_{22}$. Consequently, it follows from \eqref{e:expression for ztpiz} that $\barZ\in\calZ^0_r(\Pi)$ and $\calZ^0_r(\Pi)$ is nonempty.
\end{proof}

\subsection{Parameterization of $\calZ_{r}(\Pi)$ and $\calZ^+_{r}(\Pi)$}
\label{s:parameterization}

It turns out that one can parameterize all solutions of a given QMI associated with $\Pi \in \bpi_{q,r}$.
\begin{theorem}\label{thm:para new}
Let $\Pi\in\bpi_{q,r}$. The following statements hold:
\ben[label=\emph{(\alph*)}, ref=\alph*,leftmargin=*]
\item\label{thm:para new.1} $Z\in\calZ_{r}(\Pi)$ if and only if there exist $S,T\in\R^{r\times q}$ with $S^\top S\leq I$ such that \vspace{-5pt} 
\begin{equation}\label{eq:Z equal}\vspace{-5pt}  Z=-\Pi_{22}\gi\Pi_{21}+\big((-\Pi_{22})\gi\big)^\half S (\Pi\schur\Pi_{22})^\half+(I-\Pi_{22}\gi\Pi_{22}) T.\end{equation}

\item\label{thm:para new.2} Assume that $\calZ_r^+(\Pi)$ is nonempty, equivalently, $\Pi\schur \Pi_{22} > 0$. Then, $Z\in\calZ^+_{r}(\Pi)$ if and only if $Z$ is of the form \eqref{eq:Z equal} for some $S,T\in\R^{r\times q}$ with $S^\top S < I$.
\een
\end{theorem} 

We note that, in the special case that $\Pi_{22} < 0$, Theorem~\ref{thm:para new}.\eqref{thm:para new.2} also follows from \cite[Corollary 2.3.6]{Skelton1998} by taking $A = \Pi_{12}\Pi_{22}^{-1}$, $B = I$, $Q = \Pi \schur \Pi_{22}$, $R = - \Pi_{22}$ and $X = Z^\top$ in that result. Here we provide a proof in the case that $\Pi_{22}$ is not necessarily negative definite and hence $\calZ_r(\Pi)$ is not necessarily bounded.

\begin{proof} 
We first prove \eqref{thm:para new.2}. From \eqref{e:expression for ztpiz} we have that $Z\in\calZ^+_{r}(\Pi)$ if and only if\vspace{-5pt} 
\begin{equation}\label{eq:equivstrict} \vspace{-5pt} (Z+\Pi_{22}\gi\Pi_{21})^\top(-\Pi_{22})(Z+\Pi_{22}\gi\Pi_{21}) < \Pi\schur\Pi_{22}. \end{equation}
By Lemma~\ref{l:A A <= B B 2}.\ref{item<I}, we then have that $Z\in\calZ^+_{r}(\Pi)$ if and only if there exists a matrix $S$ such that $S^\top S < I$ and $
(-\Pi_{22})^\half (Z+\Pi_{22}\gi\Pi_{21}) = S (\Pi\schur\Pi_{22})^\half$. Using the fact that $\ker (-\Pi_{22})^\half = \ker \Pi_{22}$, and by exploiting Lemma~\ref{lem: image-exist}, we see that this is equivalent to $Z+\Pi_{22}\gi\Pi_{21} = \big((-\Pi_{22})^\half \big)\gi S (\Pi\schur\Pi_{22})^\half + (I-\Pi_{22}\gi\Pi_{22})T$ for some $T\in\R^{r\times q}$. This proves \eqref{thm:para new.2}. The proof of \eqref{thm:para new.1} follows the same arguments but instead invokes Lemma~\ref{l:A A <= B B 2}.\ref{item<=I}.
\end{proof}

\subsection{Image of $\calZ_{r}(\Pi)$ and $\calZ^+_{r}(\Pi)$ under linear maps}
\label{s:projections}

Let $W\in\R^{q\times p}$. For $\calS\subseteq\R^{r\times q}$, we define $\calS W:=\set{SW}{S\in\calS}$. Also, for $\Pi \in \mathbb{S}^{q+r}$ we define \vspace{-5pt} 
\begin{equation}\label{e:PiW}
\vspace{-5pt} \Pi_W:=\bbm W^\top & 0\\0 & I_r\ebm\Pi\bbm W & 0\\0& I_r\ebm=\bbm W^\top\Pi_{11}W&W^\top\Pi_{12}\\\Pi_{21}W&\Pi_{22}\ebm\in\S{p+r}.
\end{equation}
Note that $\Pi_W\in\bpi_{p,r}$ if $\Pi\in\bpi_{q,r}$. Next, we will study the sets $\calZ_{r}(\Pi)$ and $\calZ_{r}(\Pi_W)$.
\bthe\label{t:projectionfcr}
Let $\Pi\in\bpi_{q,r}$ and $W\in\R^{q\times p}$. We have that $\calZ_{r}(\Pi)W \subseteq \calZ_{r}(\Pi_W)$. Assume, in addition, that either $W$ has full column rank or $\Pi_{22}$ is nonsingular.
Then, $\calZ_{r}(\Pi)W=\calZ_{r}(\Pi_W)$.
\ethe

\begin{proof} First we prove that $\calZ_{r}(\Pi)W\subseteq\calZ_{r}(\Pi_W)$. Let $Z'\in\zs W$. Then, $Z'=ZW$ where $Z\in\zs$. Note that \vspace{-5pt} 
\[\vspace{-5pt} \bbm I_q\\Z\ebm^\top\Pi\bbm I_q\\Z\ebm\geq 0, \quad \textrm{implies that } W^\top \bbm I_q\\Z\ebm^\top\Pi\bbm I_q\\Z\ebm W=\bbm I_p\\Z'\ebm^\top\Pi_W\bbm I_p\\Z'\ebm\geq 0.\]
This means that $Z'\in\calZ_{r}(\Pi_W)$ and hence $\calZ_{r}(\Pi)W\subseteq\calZ_{r}(\Pi_W)$.

Now, we assume that at least one of the conditions on $W$ and $\Pi_{22}$ hold. We claim that $\calZ_{r}(\Pi_W)\subseteq\calZ_{r}(\Pi)W$. Let $Z'\in\calZ_{r}(\Pi_W)$. Note that $\Pi_W\schur\Pi_{22}=W^\top(\Pi\schur\Pi_{22})W$. From \eqref{e:PiW} and Theorem~\ref{thm:para new}.\eqref{thm:para new.1}, we see that\vspace{-5pt} 
\begin{equation}\label{e:z prime}
\vspace{-5pt} Z'=-\Pi_{22}\gi\Pi_{21}W+\big((-\Pi_{22}\big)\gi\big)^\half S\big(W^\top(\Pi\schur\Pi_{22})W\big)^\half+(I-\Pi_{22}\gi\Pi_{22})V
\end{equation}
where $V,S\in\R^{r\times p}$ with $S^\top S\leq I_p$. Since $\big(W^\top(\Pi\schur\Pi_{22})W\big)^\half\big(W^\top(\Pi\schur\Pi_{22})W\big)^\half = W^\top (\Pi\schur\Pi_{22})^\half (\Pi\schur\Pi_{22})^\half W$, we have that $\big(W^\top(\Pi\schur\Pi_{22})W\big)^\half=T(\Pi\schur\Pi_{22})^\half W$ where $T\in\R^{p\times q}$ is such that $T^\top T\leq I_q$ due to Lemma~\ref{l:A A <= B B 2}. If $W$ has full column rank then \eqref{e:z prime} results in $Z'=ZW$ where \vspace{-5pt} 
\begin{equation}\label{e:from z prime to z}
\vspace{-5pt} Z := -\Pi_{22}\gi\Pi_{21}+\big((-\Pi_{22}\big)\gi\big)^\half ST(\Pi\schur\Pi_{22})^\half+(I-\Pi_{22}\gi\Pi_{22})V(W^\top W)\inv W^\top.
\end{equation}
On the other hand, if $\Pi_{22}$ is nonsingular then $I - \Pi_{22}^\dagger \Pi_{22} = 0$ and $Z' = ZW$ with\vspace{-5pt} 
\begin{equation}\label{e:Zcase2}
\vspace{-5pt} Z := -\Pi_{22}\inv\Pi_{21}+\big(-\Pi_{22}\inv\big)^\half ST(\Pi\schur\Pi_{22})^\half.
\end{equation}
In either of these two cases, we observe that $T^\top S^\top S T\leq T^\top T\leq I_q$. Therefore, Theorem~\ref{thm:para new}.\eqref{thm:para new.1} implies that $Z\in\zs$. Consequently, we see that $Z'=ZW$ for some $Z \in\zs$ and thus $\calZ_{r}(\Pi_W)\subseteq\calZ_{r}(\Pi)W$. This proves the theorem.
\end{proof}

A similar result holds for the sets $\calZ^+_{r}(\Pi)$ and $\calZ^+_{r}(\Pi_W)$, as shown next.

\bthe\label{t:projectionfcrstrict}
Let $\Pi\in\bpi_{q,r}$ and $W\in\R^{q\times p}$. Assume that $W$ has full column rank and $\calZ_r^+(\Pi)$ is nonempty. Then, $\calZ^+_{r}(\Pi)W=\calZ^+_{r}(\Pi_W)$.
\ethe

The proof of Theorem~\ref{t:projectionfcrstrict} is similar to that of Theorem~\ref{t:projectionfcr}, but applies Theorem~\ref{thm:para new}.\eqref{thm:para new.2} instead of Theorem~\ref{thm:para new}.\eqref{thm:para new.1}. The following two corollaries follow from Theorems~\ref{t:projectionfcr} and \ref{t:projectionfcrstrict} and provide conditions under which there exists a ``structured" matrix in $\mathcal{Z}_r(\Pi)$ (respectively, $\mathcal{Z}_r^+(\Pi)$) that satisfies a linear equation.\vspace{-5pt} 
\begin{corollary}
\label{cor:elimination}
Let $\Pi \in \S{q+r}$ with $\Pi_{22} \leq 0$ and $\ker \Pi_{22} \subseteq \ker \Pi_{21}$, $W \in \mathbb{R}^{q\times p}$, and $Y \in \mathbb{R}^{r \times p}$. Suppose that either $W$ has full column rank or $\Pi_{22}$ is nonsingular. Then there exists a $Z \in \mathcal{Z}_r(\Pi)$ such that $Z W = Y$ if and only if $\Pi \in \bpi_{q,r}$ and $Y \in \mathcal{Z}_r(\Pi_W)$.
\end{corollary} 
\vspace{-5pt} \begin{proof}
To prove the `if' statement, suppose that $\Pi \in \bpi_{q,r}$ and $Y \in \mathcal{Z}_r(\Pi_W)$. By Theorem~\ref{t:projectionfcr} there exists a $Z\in \mathcal{Z}_r(\Pi)$ such that $ZW = Y$. 

To prove the `only if' statement, suppose that there exists a $Z \in \mathcal{Z}_r(\Pi)$ satisfying $ZW = Y$. Therefore, $\mathcal{Z}_r(\Pi)$ is nonempty and \eqref{e:expression for ztpiz} implies that $\Pi \schur \Pi_{22} \geq 0$. Consequently, $\Pi \in \bpi_{q,r}$. Finally, $Y \in \mathcal{Z}_r(\Pi_W)$ follows directly from multiplying the defining quadratic matrix inequality from left by $W^\top$ and right by $W$.
\end{proof}
\vspace{-5pt} \begin{corollary}
\label{cor:strictelimination}
Let $\Pi \in \S{q+r}$ with $\Pi_{22} \leq 0$ and $\ker \Pi_{22} \subseteq \ker \Pi_{21}$. Consider $W \in \mathbb{R}^{q\times p}$ and $Y \in \mathbb{R}^{r \times p}$. Assume that $W$ has full column rank. Then there exists a matrix $Z \in \mathcal{Z}_r^+(\Pi)$ satisfying $Z W = Y$ if and only if $\Pi\schur\Pi_{22} > 0$ and $Y \in \mathcal{Z}_r^+(\Pi_W)$.
\end{corollary} 

The proof of Corollary~\ref{cor:strictelimination} follows the same lines as that of Corollary~\ref{cor:elimination}, but applies Theorem~\ref{t:projectionfcrstrict} rather than Theorem~\ref{t:projectionfcr}. It is therefore omitted. Corollary~\ref{cor:strictelimination} is intimately related to the so-called \emph{elimination lemma} \cite{Hermersson1999,Scherer2001}. 
In fact, in the case that $\Pi$ is nonsingular and has $r$ negative and $q$ positive eigenvalues, Corollary~\ref{cor:strictelimination} can also be obtained from \cite[Lem. A.2]{Scherer2001} by taking $P = -\Pi$, $A = I$, $B = W^\perp$ and $C = YW^\dagger$ where $W^\perp \in \mathbb{R}^{(q-p)\times q}$ is any full row rank matrix such that $W^\perp W = 0$.

\section{Matrix versions of Finsler's lemma and Yakubovich's S-lemma}
\label{s:FinslerandSlemma}

In this section we deal with the question under what conditions all solutions to one quadratic matrix inequality also satisfy another QMI. We aim at finding necessary and sufficient conditions for the inclusion $\calZ_r(N) \subseteq \calZ_r(M)$, where $M,N\in \mathbb{S}^{q+r}$. We will also consider $Z_r^0(N)$ instead of $Z_r(N)$, and $Z_r^+(M)$ replacing $Z_r(M)$. This leads to non-strict and strict versions of Yakubovich's S-lemma and Finsler's lemma. 

\subsection{Recap of standard S-lemma and Finsler's lemma}

For future reference, we will start with a brief recap of ``standard" (vector-valued) S-lemmas and Finsler's lemma. The idea behind all of these results is that certain implications involving quadratic inequalities and equalities can be reformulated as linear matrix inequalities. The following statement is the S-lemma for non-strict inequalities, which was first proven by Yakubovich in \cite{Yakubovich1977}, see also the survey paper \cite{Polik2007}.

\begin{lemma}[S-lemma]
\label{l:standardS-lemma}
Let $M,N \in \mathbb{S}^n$ and suppose that $N$ has at least one positive eigenvalue. Then $x^\top M x \geq 0$ for all $x\in \mathbb{R}^n$ satisfying
$x^\top N x \geq 0$ if and only if there exists a real number $\alpha \geq 0$ such that $M-\alpha N \geq 0$. 
\end{lemma}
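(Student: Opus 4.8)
The ``if'' direction is immediate: if $M-\alpha N\geq 0$ with $\alpha\geq 0$, then $x^\top M x\geq\alpha\, x^\top N x\geq 0$ for every $x$ with $x^\top N x\geq 0$. The content is the ``only if'' direction, and the plan is to run the classical joint-numerical-range/separation argument. Introduce
\[
S:=\set{\big(x^\top M x,\, x^\top N x\big)\in\R^2}{x\in\R^n}.
\]
Then $S$ is a cone (scaling $x$ by $t$ scales the pair by $t^2$), it contains the origin, and by Dines' theorem it is \emph{convex}. The hypothesis says precisely that $S$ is disjoint from the convex cone $C:=\set{(u,v)\in\R^2}{u<0,\ v\geq 0}$: a point $x$ with $x^\top N x\geq 0$ and $x^\top M x<0$ would be exactly a point of $S\cap C$.

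The crux, and the only genuinely nontrivial ingredient, is the convexity of $S$; I would either cite Dines' result or prove it as follows. Since $S$ is a cone containing $0$, convexity is equivalent to $S+S\subseteq S$. Given $x,y\in\R^n$, restrict the forms $M$ and $N$ to $\spn\{x,y\}$; this reduces the claim $\big(x^\top Mx+y^\top My,\ x^\top Nx+y^\top Ny\big)\in S$ to the same statement for two quadratic forms in at most two variables. There the joint range over $\R^2$ is the cone over the image of the unit circle, i.e. the cone over a (possibly degenerate) ellipse, and a short case analysis on the position of the origin relative to that ellipse shows this cone is always convex --- an angular sector, a half-plane, a line, a ray, or all of $\R^2$. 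Being conical and convex it is closed under addition, which gives $S+S\subseteq S$.

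To finish, apply the separating hyperplane theorem to the nonempty disjoint convex sets $S$ and $C$: there is $(\tau,\sigma)\neq(0,0)$ and $\gamma\in\R$ with $\tau u+\sigma v\leq\gamma$ on $C$ and $\tau u+\sigma v\geq\gamma$ on $S$. Because $S$ and $C$ are cones, the functional $(u,v)\mapsto\tau u+\sigma v$ must in fact be $\geq 0$ on all of $S$ and $\leq 0$ on all of $C$ (rescaling any violating point would contradict the bound). Testing $\leq 0$ on $C$ at $(-1,0)$ and at $(-1,v)$ with $v\uparrow\infty$ gives $\tau\geq 0$ and $\sigma\leq 0$; and $\geq 0$ on $S$ means $x^\top(\tau M+\sigma N)x\geq 0$ for every $x$, i.e. $\tau M+\sigma N\geq 0$. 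Here the assumption that $N$ has a positive eigenvalue is used exactly once, to rule out $\tau=0$: if $\tau=0$ then $\sigma<0$ (since $(\tau,\sigma)\neq 0$), and $\sigma N\geq 0$ would force $N\leq 0$, contradicting the hypothesis. Hence $\tau>0$, and $\alpha:=-\sigma/\tau\geq 0$ satisfies $M-\alpha N=\tfrac1\tau(\tau M+\sigma N)\geq 0$. The separation and the sign bookkeeping are routine; the one place where care is needed is establishing the convexity of $S$.
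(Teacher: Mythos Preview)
Your argument is correct: this is precisely the classical Dines/separation proof of Yakubovich's S-lemma, and the bookkeeping with the signs of $\tau$ and $\sigma$ and the use of the positive-eigenvalue hypothesis to rule out $\tau=0$ are all in order. The only place where a referee might ask for more is the sketch of Dines' theorem, but reducing to two variables and analyzing the conical hull of the image of the unit circle is a legitimate route.

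As for comparison: the paper does \emph{not} prove this lemma. It is stated as Lemma~\ref{l:standardS-lemma}, attributed to Yakubovich, with a pointer to the survey of P\'olik and Terlaky, and then used as a black box in the proof of Theorem~\ref{t:nonstrictS-lemma}. So there is nothing to compare your proof against; you have supplied an argument where the paper simply cites one.
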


\vspace{-5pt}Next, we recall a variant with a strict inequality on $x^\top M x$, see \cite{Yakubovich1977} and \cite[p. 24]{Boyd1994}.

\begin{lemma}[Strict S-lemma]
\label{l:strictS-lemma}
Let $M,N \in \mathbb{S}^n$ and suppose that $N$ has at least one positive eigenvalue. Then $x^\top M x > 0$ for all nonzero $x\in \mathbb{R}^n$ satisfying
$x^\top N x \geq 0$ if and only if there exists a real number $\alpha \geq 0$ such that $M-\alpha N > 0$. 
\end{lemma}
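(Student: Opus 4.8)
The plan is to prove the two directions separately, reducing the strict statement to the non-strict S-lemma (Lemma~\ref{l:standardS-lemma}) via a compactness argument.

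For the ``if'' direction, suppose $M-\alpha N > 0$ for some $\alpha\geq 0$. Given any nonzero $x\in\R^n$ with $x^\top N x\geq 0$, I would write $x^\top M x = x^\top(M-\alpha N)x + \alpha\, x^\top N x$; the first term is strictly positive because $M-\alpha N>0$ and $x\neq 0$, and the second is nonnegative because $\alpha\geq 0$ and $x^\top N x\geq 0$. Hence $x^\top M x>0$, as required.

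For the ``only if'' direction---the substantive part---assume $x^\top M x>0$ for every nonzero $x$ with $x^\top N x\geq 0$. First I would pass to the unit sphere: the set $K:=\set{x\in\R^n}{\norm{x}=1,\ x^\top N x\geq 0}$ is closed and bounded, hence compact, and it is nonempty since $N$ has a positive eigenvalue. Moreover $x\mapsto x^\top M x$ is continuous and, by hypothesis, strictly positive on $K$, so $\epsilon:=\min_{x\in K}x^\top M x>0$. By homogeneity (both $x^\top M x$ and $x^\top N x$ are quadratic forms), this yields $x^\top M x\geq \epsilon\norm{x}^2$ for every $x$ with $x^\top N x\geq 0$, the case $x=0$ being trivial; equivalently, $x^\top(M-\epsilon I)x\geq 0$ whenever $x^\top N x\geq 0$. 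Now I would invoke Lemma~\ref{l:standardS-lemma} with $M$ replaced by $M-\epsilon I$, which is legitimate precisely because $N$ has a positive eigenvalue: there exists $\alpha\geq 0$ such that $(M-\epsilon I)-\alpha N\geq 0$, i.e.\ $M-\alpha N\geq \epsilon I>0$, which is exactly the claimed conclusion.

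The main obstacle is the passage from the pointwise strict inequality to a uniform one: one must extract a strictly positive lower bound $\epsilon$, and this is exactly where compactness of $K$ is indispensable---over a noncompact set the infimum of $x^\top M x$ could be $0$ and the argument would break down. Everything else (the homogeneity rescaling, checking that $x^\top(M-\epsilon I)x\geq 0$ holds for all $x$ including $x=0$, and verifying the hypothesis of Lemma~\ref{l:standardS-lemma}) is routine.
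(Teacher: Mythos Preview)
Your proof is correct. The paper does not actually supply a proof of this lemma; it is merely recalled as a classical result with references to Yakubovich and Boyd. Your argument---extracting a uniform positive lower bound $\epsilon$ via compactness of the constraint set on the unit sphere, and then invoking the non-strict S-lemma (Lemma~\ref{l:standardS-lemma}) with $M$ replaced by $M-\epsilon I$---is a standard and complete proof of this classical fact.
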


The strict S-lemma has also been generalized \cite{Iwasaki2000} from the set $\set{\alpha N}{\alpha \geq 0}$ of multiplier matrices to a more general set satisfying the so-called \emph{losslessness} property. Finally, we recall Finsler's lemma \cite{Finsler1936} which involves an \emph{equality} $x^\top N x = 0$. We state the result for a strict inequality on $x^\top Mx$. We note that also a non-strict version of the result exists (see e.g., \cite{Zi-zong2010}), but this will not be used in this paper. 

\begin{lemma}[Finsler's lemma]
\label{l:standardFinsler'slemma}
Let $M,N \in \mathbb{S}^n$. Then $x^\top M x > 0$ for all nonzero $x\in \mathbb{R}^n$ satisfying
$x^\top N x = 0$ if and only if there exists a real number $\alpha \in \mathbb{R}$ such that $M-\alpha N > 0$. 
\end{lemma}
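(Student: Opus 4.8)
The plan is to derive the ``only if'' direction from the strict S-lemma (Lemma~\ref{l:strictS-lemma}) after a reduction from the equality constraint $x^\top N x = 0$ to a single inequality constraint; the ``if'' direction is the one-line computation $x^\top M x = x^\top(M-\alpha N)x + \alpha\, x^\top N x = x^\top(M-\alpha N)x > 0$ for nonzero $x$ with $x^\top N x = 0$.

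For the ``only if'' direction, first dispose of the case $M>0$ by taking $\alpha=0$. So assume $M\not>0$ and call $x\neq 0$ a \emph{bad direction} if $x^\top M x\le 0$; by hypothesis no bad direction satisfies $x^\top N x = 0$, so every bad direction has $x^\top N x>0$ or $x^\top N x<0$. The crucial reduction step is the claim that \emph{all} bad directions lie on the same side, i.e.\ either every bad direction has $x^\top N x<0$ or every bad direction has $x^\top N x>0$. Granting the claim, by replacing $N$ with $-N$ (and $\alpha$ with $-\alpha$ at the end) we may assume every bad direction has $x^\top N x<0$; then $x^\top M x>0$ for \emph{all} nonzero $x$ with $x^\top N x\ge 0$. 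Now split on the inertia of $N$: if $N$ has a positive eigenvalue, Lemma~\ref{l:strictS-lemma} applied to the single inequality $x^\top N x\ge 0$ produces $\alpha\ge 0$ with $M-\alpha N>0$; if $N$ has no positive eigenvalue then $N\le 0$, so $\{x:x^\top N x\ge 0\}=\ker N$ and the hypothesis says exactly that $M$ is positive definite on $\ker N$, whence decomposing $\mathbb R^n=\ker N\oplus\im N$, writing $M$ in the corresponding block form, and taking a Schur complement shows $M-\alpha N=M+\alpha(-N)>0$ for all sufficiently large $\alpha>0$ (using $-N\ge 0$).

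The main obstacle is the reduction claim, and I would argue it by contradiction. Suppose there are bad directions $u$ with $u^\top N u>0$ and $v$ with $v^\top N v<0$. Then $u,v$ are linearly independent (dependent vectors would give $u^\top N u$ and $v^\top N v$ the same sign), so they span a plane $V$ on which $x\mapsto x^\top N x$ takes both signs; hence $N|_V$ is nonsingular and indefinite, and after a linear change of coordinates on $V$ we may take $N|_V=\diag(1,-1)$, whose null cone is the pair of lines $\{x_1=\pm x_2\}$. The hypothesis, restricted to $V$, forces $M|_V$ to be positive on these two lines, so the bad set $\{x\in V:x^\top M x\le 0\}$ is a cone that \emph{misses} both lines entirely; examining the possibilities for a symmetric $2\times 2$ form consistent with this (namely $M|_V>0$, $M|_V$ of rank one and positive semidefinite, or $M|_V$ indefinite of rank two), the bad set is respectively $\{0\}$, a single line through the origin, or the closed double cone between the two null lines of $M|_V$. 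In every case it is a $\pm$-symmetric cone whose (at most two, antipodal) connected components each avoid the dividing lines $\{x_1=\pm x_2\}$ and therefore each lie in a single open wedge; since antipodal components lie in antipodal wedges, the whole bad set is contained in $\{x_1^2>x_2^2\}$ or in $\{x_1^2<x_2^2\}$, i.e.\ entirely on one side of $N|_V$. This contradicts having both $u$ and $v$ in it on opposite sides, proving the claim and completing the proof.
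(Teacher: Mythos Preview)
The paper does not supply its own proof of this lemma; it is stated as a classical result with a citation to \cite{Finsler1936}. Your proof is correct. The key idea---reducing the equality constraint $x^\top N x=0$ to a one-sided inequality by showing that all ``bad'' directions (nonzero $x$ with $x^\top M x\le 0$) lie on the same side of $N$---is carried out cleanly via the two-dimensional analysis on $V=\spn\{u,v\}$: since $u^\top N u>0$ and $v^\top N v<0$ force $N|_V$ to have determinant $u^\top N u\cdot v^\top N v-(u^\top N v)^2<0$, the restriction is indeed nonsingular and indefinite, so the normalization $N|_V=\diag(1,-1)$ is legitimate. Your case split on $M|_V$ is exhaustive because positivity of $M|_V$ on both null lines of $N|_V$ gives $p+q>2|r|\ge 0$, ruling out $M|_V\le 0$; and the connectedness argument correctly pins each antipodal component of the bad cone to a single open $N$-wedge, hence to a single sign of $x_1^2-x_2^2$. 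One implicit point worth making explicit: in the $N\le 0$ branch you need $\im N\ne\{0\}$ for the Schur-complement step to produce a dominating term $-\alpha N_{22}>0$, but this is automatic since $M\not>0$ yields a bad direction $x$ with $x^\top N x\ne 0$, hence $N\ne 0$.
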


\subsection{Reduction of the matrix case to the vector case}

Throughout this section, we will consider matrices $M,N \in  \mathbb{S}^{q+r}$ partitioned as \vspace{-5pt}
\begin{equation}
\label{partitionMN}
\vspace{-5pt}M = \begin{bmatrix}
M_{11} & M_{12} \\ M_{21} & M_{22}
\end{bmatrix} \text{ and } N = \begin{bmatrix}
N_{11} & N_{12} \\ N_{21} & N_{22}
\end{bmatrix}.
\end{equation}

We will provide conditions under which the inclusion $\calZ_r(N) \subseteq \calZ_r(M)$ is equivalent to the \emph{vector-valued} implication $x^\top N x \geq 0 \implies x^\top M x \geq 0$. This will provide an important building block in obtaining matrix versions of the S-lemma. To proceed, we will need the following lemma.
 
\begin{lemma}
\label{l:xS}
Let $S \in \mathbb{S}^n$, $S \geq 0$. Given a nonzero vector $x \in \mathbb{R}^n$, there exists a matrix $\bar{X} \in \mathbb{R}^{n \times (n-1)}$ such that $x^\top S \bar{X} = 0$ and $\begin{bmatrix}
x & \bar{X}
\end{bmatrix}$ is nonsingular. 
\end{lemma}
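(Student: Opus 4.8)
The plan is to translate the two requirements on $\bar X$ into geometric language and then split into two cases according to whether $Sx$ vanishes. First I would note that the condition $x^\top S \bar X = 0$ says precisely that every column of $\bar X$ lies in the subspace
$$
H := \set{v \in \mathbb{R}^n}{(Sx)^\top v = 0},
$$
which is the orthogonal complement of the single vector $Sx$. So the goal is reduced to exhibiting a matrix $\bar X$ with $n-1$ columns, all contained in $H$, such that $\begin{bmatrix} x & \bar X \end{bmatrix}$ is nonsingular.

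In the easy case $Sx = 0$ one has $H = \mathbb{R}^n$, so there is no constraint at all on $\bar X$; since $x \neq 0$, I would simply complete $x$ to a basis $\pset{x, v_1, \dots, v_{n-1}}$ of $\mathbb{R}^n$ and take $\bar X = \begin{bmatrix} v_1 & \cdots & v_{n-1} \end{bmatrix}$. In the case $Sx \neq 0$, the space $H$ is a hyperplane, i.e. $\dim H = n-1$, and I would want to take the columns of $\bar X$ to be any basis of $H$; the only thing to check is that this makes $\begin{bmatrix} x & \bar X \end{bmatrix}$ nonsingular, equivalently that $x \notin H$, equivalently that $x^\top S x = (Sx)^\top x \neq 0$.

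The one substantive point — and the only place where positive semidefiniteness of $S$ enters — is exactly this last implication: I would argue that $x^\top S x = 0$ forces $Sx = 0$. This follows by writing $S = S^{\frac12} S^{\frac12}$ (or $S = R^\top R$), so that $x^\top S x = \norm{S^{\frac12} x}^2$; if this is zero then $S^{\frac12} x = 0$ and hence $Sx = S^{\frac12} S^{\frac12} x = 0$, contradicting $Sx \neq 0$ in the case under consideration. Therefore $x^\top S x > 0$, so $x \notin H$, and $\mathbb{R}^n = \spn\pset{x} \oplus H$, which completes the argument. Beyond this observation the proof is routine linear algebra, so I do not anticipate any real obstacle; the care is simply in handling the degenerate possibility $Sx = 0$ separately so that a well-defined basis of $H$ is available.
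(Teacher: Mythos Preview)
Your proof is correct and follows essentially the same approach as the paper: both split on whether $Sx=0$, take the columns of $\bar X$ to be a basis of $\ker(x^\top S)$ in the nontrivial case, and use positive semidefiniteness (via $x^\top S x = 0 \Rightarrow Sx = 0$) to conclude $x \notin \ker(x^\top S)$. The paper's write-up is slightly more compressed, but the argument is identical.
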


\begin{proof}
If $x^\top S = 0$ the statement is immediate. Thus, assume that $x^\top S \neq 0$. Let $\bar{X} \in \mathbb{R}^{n \times (n-1)}$ be a matrix whose columns form a basis for $\ker x^\top S$. If $\begin{bmatrix}
x & \bar{X}
\end{bmatrix}$ is singular, then $x\in \im \bar{X}$ and, hence, $x^\top S x = 0$. However, since $S$ is symmetric and positive semidefinite, this implies that $x^\top S = 0$. This yields a contradiction, and we conclude that $\begin{bmatrix}
x & \bar{X}
\end{bmatrix}$ is nonsingular. This proves the lemma. 
\end{proof}

Next, we state the following instrumental lemma that is an extension of \cite[Lemma A.2]{Scherer2005} to the set of matrices $\bpi_{q,r}$. Note that we do not require the matrix $N$ to satisfy $N_{22} < 0$ and $N_{11} - N_{12} N_{22}^{-1} N_{21} > 0$.

\begin{lemma}
\label{l:vectormatrix}
Let $N \in \bpi_{q,r}$. Let $x \in \mathbb{R}^{q}$ and $y \in \mathbb{R}^r$ be vectors, with $x$ nonzero, such that $\begin{bmatrix}
x \\ y
\end{bmatrix}^\top N \begin{bmatrix}
x \\ y
\end{bmatrix} \geq 0.
$ Then there exists a matrix $Z \in \calZ_r(N)$ such that $y=Zx$.
\end{lemma}
\begin{proof}
Since $x$ is nonzero and $N \schur N_{22} \geq 0$, we conclude from Lemma~\ref{l:xS} that there exists a matrix $\bar{X} \in \mathbb{R}^{q \times (q-1)}$ such that $x^\top (N\schur N_{22}) \bar{X} = 0$ and $\begin{bmatrix}
x & \bar{X}
\end{bmatrix}$ is nonsingular. Define the matrix $\bar{Y}:= - N_{22}^\dagger N_{21} \bar{X}$. Note that \vspace{-5pt}
\[
\vspace{-5pt}\begin{bmatrix}
N_{11} & N_{12} \\
N_{21} & N_{22}
\end{bmatrix} = \begin{bmatrix}
I & N_{12} N_{22}^\dagger \\0 & I 
\end{bmatrix}
\begin{bmatrix}
N \schur N_{22} & 0 \\ 0 & N_{22}
\end{bmatrix}
\begin{bmatrix}
I & 0 \\ N_{22}^\dagger N_{21} & I
\end{bmatrix}.\]
Therefore, we have\vspace{-5pt}
\[\vspace{-5pt}\begin{bmatrix}
x \\ y 
\end{bmatrix}^\top N \begin{bmatrix}
\bar{X} \\ \bar{Y}
\end{bmatrix}
 = 0 \qand
\begin{bmatrix}
\bar{X} \\ \bar{Y}
\end{bmatrix}^\top N \begin{bmatrix}
\bar{X} \\ \bar{Y}
\end{bmatrix} = \bar{X}^\top (N\schur N_{22})\bar{X} \geq 0
\]
since $N \schur N_{22} \geq 0$. The latter two results imply that\vspace{-5pt}
\[\vspace{-5pt}\begin{bmatrix}
x & \bar{X} \\ y & \bar{Y}
\end{bmatrix}^\top N \begin{bmatrix}
x & \bar{X} \\ y & \bar{Y}
\end{bmatrix} = \small \begin{bmatrix}
\begin{bmatrix}
x\\y
\end{bmatrix}^\top N \begin{bmatrix}
x\\y
\end{bmatrix} & 0 \\ 0 & \begin{bmatrix}
\bar{X}\\\bar{Y}
\end{bmatrix}^\top N \begin{bmatrix}
\bar{X}\\ \bar{Y}
\end{bmatrix}
\end{bmatrix} \normalsize \geq 0.\]
Recall that $\begin{bmatrix}
x & \bar{X}
\end{bmatrix}$ is nonsingular. Thus, the matrix $Z:=\begin{bmatrix}
y & \bar{Y}
\end{bmatrix}\begin{bmatrix}
x & \bar{X}
\end{bmatrix}^{-1}
$ is a member of $\calZ_r(N)$. In addition, note that $Zx = y$. This proves the lemma. 
\end{proof}

The following theorem provides conditions under which $\calZ_r(N) \subseteq \calZ_r(M)$ and $\calZ_r(N) \subseteq \calZ_r^+(M)$ are equivalent to their respective vector-valued implications.

\begin{theorem}
\label{t:equivmatrixvector}
Let $M,N \in \mathbb{S}^{q+r}$ with $N \in \bpi_{q,r}$. 
\begin{enumerate}[label=\emph{(\alph*)}, ref=\alph*]
\item\label{i:ns} Assume that $N$ has at least one positive eigenvalue. Then the following two statements are equivalent:
\begin{enumerate}[label=\emph{(\roman*)}, ref=\roman*]
\item\label{i:ns1} $\calZ_r(N) \subseteq \calZ_r(M)$, 
\item\label{i:ns2} $z^\top M z \geq 0$ for all $z \in \mathbb{R}^{q+r}$ satisfying $z^\top N z \geq 0$. 
\end{enumerate}
\item\label{i:s} Assume that $N_{22} < 0$. Then the following two statements are equivalent:
\begin{enumerate}[label=\emph{(\roman*)}, ref=\roman*]
\item\label{i:s1} $\calZ_r(N) \subseteq \calZ_r^+(M)$, 
\item\label{i:s2} $z^\top M z > 0$ for all nonzero $z \in \mathbb{R}^{q+r}$ satisfying $z^\top N z \geq 0$. 
\end{enumerate}
\end{enumerate}

\end{theorem}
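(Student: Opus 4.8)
The plan is to reduce everything to Lemma~\ref{l:vectormatrix}, handling the two implications in each part separately. The implications \ref{i:ns2} $\Rightarrow$ \ref{i:ns1} and \ref{i:s2} $\Rightarrow$ \ref{i:s1} need neither the eigenvalue hypothesis nor $N_{22}<0$, and follow by direct substitution: given $Z\in\calZ_r(N)$ and any $v\in\R^q$, the vector $x:=\bbm v\\ Zv\ebm$ satisfies $x^\top N x=v^\top\bbm I\\ Z\ebm^\top N\bbm I\\ Z\ebm v\geq 0$, so the assumed vector implication gives $v^\top\bbm I\\ Z\ebm^\top M\bbm I\\ Z\ebm v\geq 0$ (respectively $>0$ when $v\neq 0$, observing that then $x\neq 0$ since its first block is $v$). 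As $v$ is arbitrary, $\bbm I\\ Z\ebm^\top M\bbm I\\ Z\ebm\geq 0$ (respectively $>0$), i.e.\ $Z\in\calZ_r(M)$ (respectively $Z\in\calZ_r^+(M)$).

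For the converse implications, let $x=\bbm x_1\\ x_2\ebm$ with $x_1\in\R^q$, $x_2\in\R^r$, $x^\top N x\geq 0$, and (in the strict case) $x\neq 0$. If $x_1\neq 0$, Lemma~\ref{l:vectormatrix} produces $Z\in\calZ_r(N)$ with $x=\bbm I\\ Z\ebm x_1$; the inclusion hypothesis places $Z$ in $\calZ_r(M)$ (respectively $\calZ_r^+(M)$), whence $x^\top M x=x_1^\top\bbm I\\ Z\ebm^\top M\bbm I\\ Z\ebm x_1$ is $\geq 0$ (respectively $>0$, using $x_1\neq 0$). This already settles part~\ref{i:s}: when $N_{22}<0$, the remaining case $x_1=0$ cannot occur, since then $x_2\neq 0$ and $x^\top N x=x_2^\top N_{22}x_2<0$, contradicting $x^\top N x\geq 0$.

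The one substantive point is the case $x_1=0$ in part~\ref{i:ns}, which is exactly where the positive-eigenvalue assumption enters. Here $x^\top N x=x_2^\top N_{22}x_2$, and since $N\in\bpi_{q,r}$ forces $N_{22}\leq 0$, the inequality $x^\top N x\geq 0$ forces $x_2^\top N_{22}x_2=0$, hence $N_{22}x_2=0$; then $\ker N_{22}\subseteq\ker N_{12}$ gives $N_{12}x_2=0$, and consequently $x^\top N v=0$ for every $v\in\R^{q+r}$. Pick (by the eigenvalue hypothesis) a vector $v=\bbm v_1\\ v_2\ebm$ with $v^\top N v>0$; necessarily $v_1\neq 0$, since otherwise $v^\top N v=v_2^\top N_{22}v_2\leq 0$. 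For $t>0$ set $w_t:=x+tv=\bbm tv_1\\ x_2+tv_2\ebm$: its first block $tv_1$ is nonzero, and $w_t^\top N w_t=x^\top N x+2t\,x^\top N v+t^2 v^\top N v=x^\top N x+t^2 v^\top N v>0$. Applying the case ``$x_1\neq 0$'' to $w_t$ yields $w_t^\top M w_t\geq 0$, i.e.\ $x^\top M x+2t\,x^\top M v+t^2 v^\top M v\geq 0$ for all $t>0$; letting $t\downarrow 0$ gives $x^\top M x\geq 0$, closing the last case. I expect this perturbation step --- producing a strictly $N$-positive vector with nonzero first block that degenerates to $x$ as $t\to 0$ --- to be the only delicate part; everything else is substitution together with Lemma~\ref{l:vectormatrix}.
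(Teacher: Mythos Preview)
Your proof is correct and follows essentially the same approach as the paper: the easy direction is by substitution, and the hard direction uses Lemma~\ref{l:vectormatrix} for vectors with nonzero first block, together with a perturbation by an $N$-positive vector (necessarily with nonzero first block since $N_{22}\leq 0$) to handle the $x_1=0$ case in part~\eqref{i:ns}. The only stylistic difference is that you argue directly and pass to the limit $t\downarrow 0$, whereas the paper argues by contradiction and keeps $\epsilon$ small enough to preserve the strict $M$-negativity of the counterexample; the two are equivalent, and your formulation is arguably a bit cleaner since it avoids tracking the $M$-side of the perturbation.
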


\begin{proof}
We first prove that \eqref{i:ns}.\eqref{i:ns1} implies \eqref{i:ns}.\eqref{i:ns2}. Assume that \eqref{i:ns}.\eqref{i:ns1} holds but, on the contrary, \eqref{i:ns}.\eqref{i:ns2} does not hold. This implies that there exist vectors $x \in \mathbb{R}^q$ and $y \in \mathbb{R}^r$, not both zero, such that\vspace{-5pt}
\begin{equation}
\label{ineqsxyMN}
\vspace{-5pt}\begin{bmatrix}
x \\ y
\end{bmatrix}^\top N \begin{bmatrix}
x \\ y
\end{bmatrix} \geq 0 \text{ and } \begin{bmatrix}
x \\ y
\end{bmatrix}^\top M \begin{bmatrix}
x \\ y
\end{bmatrix} < 0.
\end{equation} 
We claim that there exists a pair $(x,y)$ satisfying \eqref{ineqsxyMN} with $x \neq 0$. 

To see this, suppose that $x = 0$ and $y$ satisfy \eqref{ineqsxyMN}. We will use these vectors to construct a new pair $(\tilde{x},\tilde{y})$ satisfying \eqref{ineqsxyMN} with $\tilde{x} \neq 0$. By the hypothesis that $N_{22} \leq 0$, this implies that $N_{22} y = 0$. In addition, since $\ker N_{22} \subseteq \ker N_{12}$ we have that $N \begin{bmatrix}
0 \\ y
\end{bmatrix}
=0$. Let $
\begin{bmatrix}
\bar{x}^\top & \bar{y}^\top
\end{bmatrix}^\top 
$ be an eigenvector of $N$ corresponding to a positive eigenvalue $\lambda$. Note that $\bar{x} \neq 0$ because $N_{22} \leq 0$. The previous implies that\vspace{-5pt}
\[\vspace{-5pt}\left(\begin{bmatrix}
0 \\ y
\end{bmatrix} + \epsilon \begin{bmatrix}
\bar{x} \\ \bar{y}
\end{bmatrix}\right)^\top N \left(\begin{bmatrix}
0 \\ y
\end{bmatrix} + \epsilon \begin{bmatrix}
\bar{x} \\ \bar{y}
\end{bmatrix}\right) = \epsilon^2 \lambda \begin{bmatrix}
\bar{x} \\ \bar{y}
\end{bmatrix}^\top \begin{bmatrix}
\bar{x} \\ \bar{y}
\end{bmatrix} \geq 0\]
for any $\epsilon \in \mathbb{R}$. In addition,\vspace{-5pt}
\begin{equation}
\label{xbarybarineq}
\vspace{-5pt}\left(\begin{bmatrix}
0 \\ y
\end{bmatrix} + \epsilon \begin{bmatrix}
\bar{x} \\ \bar{y}
\end{bmatrix}\right)^\top M \left(\begin{bmatrix}
0 \\ y
\end{bmatrix} + \epsilon \begin{bmatrix}
\bar{x} \\ \bar{y}
\end{bmatrix}\right) < 0
\end{equation}
if $\epsilon$ is sufficiently small. Therefore, for sufficiently small $\epsilon \neq 0$, the pair $(\epsilon \bar{x}, y + \epsilon \bar{y})$ satisfies \eqref{ineqsxyMN}. As $\bar{x} \neq 0$, there exists a pair $(\tilde{x},\tilde{y}):=(\epsilon \bar{x}, y + \epsilon \bar{y})$ satisfying \eqref{ineqsxyMN} with $\tilde{x} \neq 0$. By Lemma~\ref{l:vectormatrix} there exists a matrix $Z \in \calZ_r(N)$ such that $\tilde{y}=Z\tilde{x}$. By \eqref{xbarybarineq} we see that
$
\tilde{x}^\top \begin{bmatrix}
I \\ Z
\end{bmatrix}^\top M \begin{bmatrix}
I \\ Z
\end{bmatrix} \tilde{x} < 0,
$ that is, $Z \not\in \calZ_r(M)$. This, however, contradicts the assumption that $\calZ_r(N) \subseteq \calZ_r(M)$. Therefore, we conclude that \eqref{i:ns}.\eqref{i:ns2} holds. 

Next, we prove that \eqref{i:s}.\eqref{i:s1} implies \eqref{i:s}.\eqref{i:s2}. Therefore, assume that \eqref{i:s}.\eqref{i:s1} holds but, on the contrary, \eqref{i:s}.\eqref{i:s2} does not hold. This implies that there exist vectors $x \in \mathbb{R}^q$ and $y \in \mathbb{R}^r$, not both zero, such that 
$\begin{bmatrix}
x \\ y
\end{bmatrix}^\top N \begin{bmatrix}
x \\ y
\end{bmatrix} \geq 0$ and $\begin{bmatrix}
x \\ y
\end{bmatrix}^\top M \begin{bmatrix}
x \\ y
\end{bmatrix} \leq 0$. 
This implies that $x \neq 0$. Indeed, if $x = 0$ then also $y = 0$ by the hypothesis that $N_{22} < 0$. Thus, by Lemma~\ref{l:vectormatrix} there exists a matrix $Z \in \calZ_r(N)$ such that $y=Zx$. Combining the previous yields $x^\top \begin{bmatrix}
I \\ Z
\end{bmatrix}^\top
M \begin{bmatrix}
I \\ Z
\end{bmatrix}x \leq 0$,
that is, $Z \not\in \calZ_r^+(M)$. This contradicts the hypothesis that $\calZ_r(N) \subseteq \calZ_r^+(M)$. This shows that \eqref{i:s}.\eqref{i:s2} holds. 

Next, we prove that \eqref{i:ns}.\eqref{i:ns2} implies \eqref{i:ns}.\eqref{i:ns1}. Suppose that $Z \in \calZ_r(N)$. Then: \vspace{-5pt}
\[\vspace{-5pt}y^\top \begin{bmatrix}
I \\ Z
\end{bmatrix}^\top N \begin{bmatrix}
I \\ Z
\end{bmatrix} y \geq 0, \:\: \text{ and thus } \:\: y^\top \begin{bmatrix}
I \\ Z
\end{bmatrix}^\top M \begin{bmatrix}
I \\ Z
\end{bmatrix} y \geq 0\]
for all $y \in \mathbb{R}^q$. In other words, $Z \in \calZ_r(M)$. The proof that \eqref{i:s}.\eqref{i:s2} implies \eqref{i:s}.\eqref{i:s1} is analogous and therefore omitted. This proves the theorem.
\end{proof}

\subsection{Matrix versions of the S-lemma and Finsler's lemma for non-strict inequalities}

In the following theorem we apply the results of the previous section to establish a matrix version of the S-lemma.

\begin{theorem}[Matrix S-lemma]
\label{t:nonstrictS-lemma}
Let $M,N \in \mathbb{S}^{q+r}$. If there exists a real $\alpha \geq 0$ such that $M - \alpha N \geq 0$, then $\calZ_r(N) \subseteq \calZ_r(M)$. Next, assume that $N \in \bpi_{q,r}$ and $N$ has at least one positive eigenvalue. Then $\calZ_r(N) \subseteq \calZ_r(M)$ if and only if there exists a real $\alpha \geq 0$ such that $M-\alpha N \geq 0$. 
\end{theorem}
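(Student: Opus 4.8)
The plan is to prove the two implications separately, with the easy direction first. For the first claim, suppose $M - \alpha N \geq 0$ for some $\alpha \geq 0$. Given any $Z \in \calZ_r(N)$, I would multiply the inequality $M - \alpha N \geq 0$ on the left and right by $\begin{bmatrix} I \\ Z \end{bmatrix}$ and its transpose, yielding
\[
\begin{bmatrix} I \\ Z \end{bmatrix}^\top M \begin{bmatrix} I \\ Z \end{bmatrix} \geq \alpha \begin{bmatrix} I \\ Z \end{bmatrix}^\top N \begin{bmatrix} I \\ Z \end{bmatrix} \geq 0,
\]
the last inequality because $\alpha \geq 0$ and $Z \in \calZ_r(N)$. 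Hence $Z \in \calZ_r(M)$. This direction requires no assumptions on $N$ beyond symmetry.

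For the converse, assume $N \in \bpi_{q,r}$, $N$ has at least one positive eigenvalue, and $\calZ_r(N) \subseteq \calZ_r(M)$. The strategy is to reduce to the vector-valued S-lemma (Lemma~\ref{l:standardS-lemma}) via Theorem~\ref{t:equivmatrixvector}. Specifically, since $N \in \bpi_{q,r}$ and $N$ has a positive eigenvalue, part \eqref{i:ns} of Theorem~\ref{t:equivmatrixvector} tells us that $\calZ_r(N) \subseteq \calZ_r(M)$ is equivalent to the implication: $x^\top N x \geq 0 \implies x^\top M x \geq 0$ for all $x \in \R^{q+r}$. Now $N$, viewed as an element of $\S{q+r}$, has at least one positive eigenvalue by hypothesis, so the standard S-lemma applies and yields a real $\alpha \geq 0$ with $M - \alpha N \geq 0$. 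This completes the proof.

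The only point requiring a little care is checking that the hypotheses of Theorem~\ref{t:equivmatrixvector}~\eqref{i:ns} are exactly the ones assumed here: we need $N \in \bpi_{q,r}$ (assumed) and $N$ to have at least one positive eigenvalue (assumed). Similarly the standard S-lemma only needs $N$ to have a positive eigenvalue, which we have. So there is essentially no obstacle; the substantive work has already been done in establishing Theorem~\ref{t:equivmatrixvector}, and this theorem is a clean corollary obtained by chaining that reduction with the classical scalar S-lemma. I would also remark that the ``only if'' direction is where the structural assumption $N \in \bpi_{q,r}$ is genuinely used (through Lemma~\ref{l:vectormatrix}, which underlies Theorem~\ref{t:equivmatrixvector}), whereas the ``if'' direction is unconditional — mirroring the asymmetry in the scalar S-lemma.
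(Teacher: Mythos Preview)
Your proof is correct and follows essentially the same approach as the paper: the ``if'' direction is handled by the obvious congruence argument, and the ``only if'' direction chains Theorem~\ref{t:equivmatrixvector}\eqref{i:ns} with the classical S-lemma (Lemma~\ref{l:standardS-lemma}). The paper's proof is slightly more terse (it calls the ``if'' parts obvious), but the logical structure is identical.
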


\begin{proof}
The `if' statements are obvious. We thus focus on proving the `only if' part of the second statement. Assume that $\calZ_r(N) \subseteq \calZ_r(M)$. By Theorem~\ref{t:equivmatrixvector}, $x^\top M x \geq 0$ for all $x\in \mathbb{R}^{q+r}$ satisfying $x^\top N x \geq 0$.  Finally, by Lemma~\ref{l:standardS-lemma}, we conclude that there exists a scalar $\alpha \geq 0$ such that $M-\alpha N \geq 0$.
\end{proof}

Similar to the `standard' S-lemma (Lemma~\ref{l:standardS-lemma}), we note that the matrix S-lemma requires $N$ to have at least one positive eigenvalue, an assumption known as the \emph{Slater condition}. It turns out, however, that under additional assumptions on $M$ and $N$, we can state a theorem analogous to Theorem~\ref{t:nonstrictS-lemma} in the case where $N \in \bpi_{q,r}$ has no positive eigenvalues, equivalently, $N \schur N_{22} = 0$. In this special case, $\calZ_r(N) = \calZ_r^0(N)$ which leads to a matrix version of Finsler's lemma. 

\begin{theorem}[Matrix Finsler's lemma]
\label{t:matFinsler}
Let $M,N \in  \mathbb{S}^{q+r}$. If there exists $\alpha \in \mathbb{R}$ such that $M - \alpha N \geq 0$ then $\calZ_r^0(N) \subseteq \calZ_r(M)$. Next, define $\Theta \in \mathbb{S}^q$ by\vspace{-5pt}
\begin{equation}\label{eq:def Theta}\vspace{-5pt} \Theta := 
\begin{bmatrix}
I \\ -N_{22}^\dagger N_{21}
\end{bmatrix}^\top M \begin{bmatrix}
I \\ -N_{22}^\dagger N_{21} 
\end{bmatrix}.
\end{equation}
Assume that $M,N \in \bpi_{q,r}$, $N\schur N_{22} = 0$, and $\ker \Theta\subseteq \ker M\schur M_{22}$.
Then $\calZ_r^0(N) \subseteq \calZ_r(M)$ if and only if there exists $\alpha \geq 0$ such that $M - \alpha N \geq 0$.
\end{theorem}

\begin{proof}
The `if' statements are obvious. 
Now, assume that  $\calZ_r^0(N) \subseteq \calZ_r(M)$. Let $Z \in \calZ_r^0(N)$, $\xi \in \ker N_{22}$, and $\eta \in \mathbb{R}^q$ be a nonzero vector.
By hypothesis, we have \vspace{-5pt}
\begin{equation}
\label{Zhat}
\vspace{-5pt}Z+\gamma \xi\eta^\top \in \calZ_r(M)
\end{equation}
for all $\gamma \in \mathbb{R}$. Recall that $M \in \bpi_{q,r}$ and therefore $M_{22} \leq 0$. This implies that $M_{22} \xi = 0$, for otherwise there exists a sufficiently large $\gamma \in \mathbb{R}$ that violates \eqref{Zhat}. We have thus proven that $\ker N_{22} \subseteq \ker M_{22}$. Let $T := \begin{bmatrix}
I & 0 \\ -N_{22}^\dagger N_{21} & I
\end{bmatrix}$, then \vspace{-5pt}
\[\vspace{-5pt}T^\top N T = \begin{bmatrix}
0 & 0 \\ 0 & N_{22}
\end{bmatrix} \text{ and } 
T^\top M T = \begin{bmatrix}
\Theta & M_{12} -N_{12} N_{22}^\dagger M_{22} \\
M_{21}-M_{22} N_{22}^\dagger N_{21} & M_{22}
\end{bmatrix}.\]
This yields\vspace{-5pt}
\begin{equation}
\label{matrixTMaNT}
\vspace{-5pt}T^\top (M-\alpha N) T = \begin{bmatrix}
\Theta & M_{12} -N_{12} N_{22}^\dagger M_{22} \\
M_{21}-M_{22} N_{22}^\dagger N_{21} & M_{22} - \alpha N_{22}
\end{bmatrix}.
\end{equation}
Next, note that $\ker M_{22} \subseteq \ker M_{12}$ implies the following two statements:\vspace{-5pt}
\begin{subequations}
	\begin{align}
	\label{rewriteTheta}
	\Theta = M\schur M_{22} + (M_{22}^\dagger M_{21} - N_{22}^\dagger N_{21})^\top M_{22} (M_{22}^\dagger M_{21} - N_{22}^\dagger N_{21})\\
	\label{eqM22} M_{22} (M_{22}^\dagger M_{21} - N_{22}^\dagger N_{21}) = M_{21} -M_{22}N_{22}^\dagger N_{21}.
	\end{align}
\end{subequations}
Note that $-N_{22}^\dagger N_{21} \in \calZ_r^0(N) \subseteq \calZ_r(M)$ and thus, $\Theta \geq 0$. 
Therefore, since $M_{22} \leq 0$, \eqref{rewriteTheta} and \eqref{eqM22} imply $\ker(M\schur M_{22}) = \ker \Theta \cap \ker(M_{21} - M_{22} N_{22}^\dagger N_{21})$. Therefore, by the hypothesis that $\ker \Theta \subseteq \ker (M\schur M_{22})$ we must have $\ker \Theta = \ker (M\schur M_{22})$, and it follows that $\ker \Theta = \ker(M_{21} - M_{22} N_{22}^\dagger N_{21})$. Consequently, by \eqref{matrixTMaNT} and $\Theta \geq 0$, we see that $T^\top (M-\alpha N) T \geq 0$ if and only if 
\begin{equation}
\label{ineqM22N22}
M_{22} - \alpha N_{22} - (M_{21}-M_{22} N_{22}^\dagger N_{21}) \: \Theta^\dagger (M_{12} -N_{12} N_{22}^\dagger M_{22}) \geq 0. 
\end{equation}

Since $N_{22} \leq 0$ and $\ker N_{22} \subseteq \ker M_{22} \subseteq \ker M_{12}$, we conclude that there exists a sufficiently large $\alpha \geq 0$ such that \eqref{ineqM22N22} holds. This implies that there exists an $\alpha \geq 0$ such that $M-\alpha N \geq 0$. This proves the theorem. 
\end{proof}

The assumption on the matrix $\Theta$ is required in the sense that Theorem~\ref{t:matFinsler} is, in general, not valid without it. We illustrate this as follows.
\begin{example}
Suppose that $ N=\bbm -1 & 1\\1 &-1\ebm \: \text{ and } M = \begin{bmatrix}
1 & 0 \\ 0 & -1 \end{bmatrix}$. Note that $M,N \in \bpi_{1,1}$ and $N \schur N_{22} = 0$. In this case, $\Theta = 0$ and $M \schur M_{22} = 1$ so the assumption on $\Theta$ of Theorem~\ref{t:matFinsler} does not hold. In addition, we see that $\calZ_1^0(N) = \{1\} \subseteq \calZ_1(M)$. Nonetheless, there does not exist an $\alpha \geq 0$ such that $M-\alpha N \geq 0$. 
\end{example}

\subsection{Matrix versions of the S-lemma and Finsler's lemma for a strict inequality}

Subsequently, we consider {\em strict} versions of the above theorems. We focus on the case that the inequality involving $M$ is strict, while the inequality on $N$ is nonstrict. We note that a matrix S-lemma with two strict inequalities was considered in \cite{Szabo2013}. The following theorem provides a strict matrix S-lemma in case $N_{22}$ is negative definite.

\begin{theorem}[Strict matrix S-lemma]
\label{t:strictS-lemmaN22}
Let $M,N \in \mathbb{S}^{q+r}$. If there exists a real $\alpha \geq 0$ such that $M - \alpha N > 0$, then $\calZ_r(N) \subseteq \calZ_r^+(M)$. Next, assume that $N \in \bpi_{q,r}$ and $N_{22} < 0$. Then $\calZ_r(N) \subseteq \calZ_r^+(M)$ if and only if there exists a real $\alpha \geq 0$ such that $M-\alpha N > 0$. 
\end{theorem}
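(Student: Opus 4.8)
The \emph{if} direction is immediate and I would dispose of it in one line: if $M-\alpha N>0$ for some $\alpha\ge 0$, then for every $Z\in\calZ_r(N)$ the matrix $L:=\begin{bmatrix}I_q\\ Z\end{bmatrix}$ has full column rank, so $L^\top M L=L^\top(M-\alpha N)L+\alpha\,L^\top N L$ is the sum of a positive definite matrix and a positive semidefinite one (using $\alpha\ge 0$ and $L^\top N L\ge 0$), hence positive definite; that is, $Z\in\calZ_r^+(M)$.

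For the converse, the plan is to first reduce to a vector-valued implication and then call on a classical S-lemma or Finsler's lemma. Assume $N\in\bpi_{q,r}$, $N_{22}<0$, and $\calZ_r(N)\subseteq\calZ_r^+(M)$. The hypothesis $N_{22}<0$ is exactly what Theorem~\ref{t:equivmatrixvector}.\eqref{i:s} needs, and that theorem converts the inclusion into the statement that $x^\top M x>0$ for all nonzero $x\in\R^{q+r}$ with $x^\top N x\ge 0$. I would then split according to the inertia of $N$.

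If $N$ has at least one positive eigenvalue, the strict S-lemma (Lemma~\ref{l:strictS-lemma}) applies verbatim and yields $\alpha\ge 0$ with $M-\alpha N>0$, so this case is done. If $N$ has no positive eigenvalue, i.e.\ $N\le 0$, then $x^\top N x\ge 0$ is equivalent to $x^\top N x=0$, so the vector implication above is exactly the hypothesis of Finsler's lemma (Lemma~\ref{l:standardFinsler'slemma}); this gives some $\alpha\in\R$ with $M-\alpha N>0$, but not a priori with $\alpha\ge 0$. I would then boost the multiplier: for any $\alpha'\ge\alpha$ we have $M-\alpha' N=(M-\alpha N)+(\alpha-\alpha')N\ge M-\alpha N>0$, since $\alpha-\alpha'\le 0$ and $N\le 0$; choosing $\alpha'=\max\{\alpha,0\}\ge 0$ finishes this case. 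As an alternative to invoking Finsler here, one can argue directly in this degenerate case by splitting $\R^{q+r}=\ker N\oplus\im N$, using that $M$ is positive definite on $\ker N$ while $-N$ is positive definite on $\im N$, and completing the square to see $M-\alpha N>0$ for all large enough $\alpha\ge 0$.

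The main obstacle is this second case $N\le 0$: it genuinely occurs under the standing hypotheses — for instance $N=\diag(0,-1)\in\bpi_{1,1}$ has $N_{22}=-1<0$ yet no positive eigenvalue — so the strict S-lemma cannot be applied, and one must instead pass through Finsler's lemma and then argue that the multiplier may be taken nonnegative, the latter step being exactly where $N\le 0$ enters. Everything else (the reduction of Theorem~\ref{t:equivmatrixvector} and the strict S-lemma) is used off the shelf.
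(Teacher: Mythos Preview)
Your proposal is correct and follows essentially the same route as the paper's proof: reduce the matrix inclusion to the vector implication via Theorem~\ref{t:equivmatrixvector}.\eqref{i:s}, then split on whether $N$ has a positive eigenvalue, invoking the strict S-lemma (Lemma~\ref{l:strictS-lemma}) in the first case and Finsler's lemma (Lemma~\ref{l:standardFinsler'slemma}) in the second. The only cosmetic difference is in securing $\alpha\ge 0$ when $N\le 0$: the paper observes that if the Finsler multiplier $\bar\alpha$ is negative then $M>\bar\alpha N\ge 0$ so one may take $\alpha=0$, whereas you increase the multiplier via $M-\alpha' N=(M-\alpha N)+(\alpha-\alpha')N>0$ for any $\alpha'\ge\alpha$; both arguments exploit $N\le 0$ in the same way.
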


\begin{proof}
The `if' parts are clear. Therefore, we focus on proving the `only if' part of the second statement. Suppose that $\calZ_r(N) \subseteq \calZ_r^+(M)$. By Theorem~\ref{t:equivmatrixvector}, we have that $x^\top M x > 0$ for all nonzero $x \in \mathbb{R}^{q+r}$ satisfying $x^\top N x \geq 0$. We now distinguish two cases. First suppose that $N$ has at least one positive eigenvalue. Then, by Lemma~\ref{l:strictS-lemma}, there exists a real $\alpha \geq 0$ such that $M-\alpha N > 0$. Next, suppose that $N$ does not have any positive eigenvalues, i.e., $N \leq 0$. We clearly have that $x^\top M x > 0$ for all nonzero $x \in \mathbb{R}^{q+r}$ satisfying $x^\top N x = 0$. Then, by Lemma~\ref{l:standardFinsler'slemma}, there exists a real $\bar{\alpha} \in \mathbb{R}$ such that $M-\bar{\alpha} N > 0$. If $\bar{\alpha} \geq 0$ then we have $M-\alpha N > 0$ for $\alpha = \bar{\alpha}$. On the other hand, if $\bar{\alpha} < 0$ then $M > \bar{\alpha} N \geq 0$, so $M - \alpha N > 0$ for $\alpha = 0$. This proves the theorem.  
\end{proof}

One can even prove a strict matrix S-lemma in the case that $N_{22}$ is not necessarily negative definite, but under the extra assumptions that $M_{22} \leq 0$ and the Slater condition holds on $N$. It turns out, however, that in that case we need two real numbers $\alpha \geq 0$ and $\beta > 0$ to state a necessary and sufficient condition.

\begin{theorem}[Strict matrix S-lemma with $\alpha$ and $\beta$]
\label{t:strictS-lemma}
Let $M,N \in  \mathbb{S}^{q+r}$. Then we have that $\calZ_r(N) \subseteq \calZ_r^+(M)$
if there exist scalars  $\alpha \geq 0$ and $\beta >0$ such that \vspace{-5pt}
\begin{equation}
\label{ineqalphabeta}
\vspace{-5pt}M-\alpha N \geq \begin{bmatrix}
\beta I & 0 \\ 0 & 0
\end{bmatrix}.
\end{equation} 
Assume, in addition, that $N \in \bpi_{q,r}$, $M_{22} \leq 0$ and $N$ has at least one positive eigenvalue. Then $\calZ_r(N) \subseteq \calZ_r^+(M)$ if and only if there exist $\alpha \geq 0$ and $\beta >0$ such that \eqref{ineqalphabeta} holds.
\end{theorem}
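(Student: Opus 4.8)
The `if' direction is immediate: if $\alpha\geq 0$, $\beta>0$ and $M-\alpha N\geq\begin{bmatrix}\beta I&0\\0&0\end{bmatrix}$, then for every $Z\in\calZ_r(N)$, multiplying on the left and right by $\begin{bmatrix}I\\Z\end{bmatrix}^\top$ and $\begin{bmatrix}I\\Z\end{bmatrix}$ gives $\begin{bmatrix}I\\Z\end{bmatrix}^\top M\begin{bmatrix}I\\Z\end{bmatrix}\geq\alpha\begin{bmatrix}I\\Z\end{bmatrix}^\top N\begin{bmatrix}I\\Z\end{bmatrix}+\beta I\geq\beta I>0$, i.e. $Z\in\calZ_r^+(M)$. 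For the converse, the plan is to produce a scalar $\beta>0$ such that the shifted matrix $\tilde M:=M-\begin{bmatrix}\beta I&0\\0&0\end{bmatrix}$ still satisfies $\calZ_r(N)\subseteq\calZ_r(\tilde M)$; since $N\in\bpi_{q,r}$ has a positive eigenvalue, the non-strict matrix S-lemma (Theorem~\ref{t:nonstrictS-lemma}) then yields an $\alpha\geq 0$ with $\tilde M-\alpha N\geq 0$, which is exactly \eqref{ineqalphabeta}. Thus everything reduces to finding a uniform lower bound $\begin{bmatrix}I\\Z\end{bmatrix}^\top M\begin{bmatrix}I\\Z\end{bmatrix}\geq\beta I$ valid for all $Z\in\calZ_r(N)$; this is the heart of the matter, since by Theorem~\ref{t:basic props} the set $\calZ_r(N)$ is unbounded unless $N_{22}<0$.

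I would first pass to vectors via Lemma~\ref{l:vectormatrix}, which converts $\calZ_r(N)\subseteq\calZ_r^+(M)$ into: $x^\top M x>0$ for every $x=\begin{bmatrix}x_1\\x_2\end{bmatrix}$ with $x_1\in\R^q$ nonzero and $x^\top N x\geq 0$. Next I would establish the structural fact $\ker N_{22}\subseteq\ker M_{22}\cap\ker M_{12}$. To see this, fix $Z_0\in\calZ_r(N)$ (nonempty by Theorem~\ref{t:basic props}); for $\xi\in\ker N_{22}$ and arbitrary $\eta\in\R^q$, $\gamma\in\R$, identity \eqref{e:expression for ztpiz} together with $N_{22}\xi=0$ and $\ker N_{22}\subseteq\ker N_{12}$ shows $Z_0+\gamma\xi\eta^\top\in\calZ_r(N)\subseteq\calZ_r^+(M)$ for all $\gamma$ (the same computation as in the proof of Theorem~\ref{t:matFinsler}). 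Multiplying the resulting positive definiteness by $\eta$ on both sides gives a scalar inequality $a+2b\gamma+c\gamma^2>0$ for all $\gamma\in\R$, with $c=\xi^\top M_{22}\xi\leq 0$ (because $M_{22}\leq 0$) and $a>0$ (because $Z_0\in\calZ_r^+(M)$ and $\eta\neq 0$); an opening-downward parabola is impossible, so $c=0$, i.e. $M_{22}\xi=0$, and then linearity forces $b=\eta^\top M_{12}\xi=0$ for all $\eta$, i.e. $M_{12}\xi=0$.

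Finally, decomposing $\R^r=\ker N_{22}\oplus\im N_{22}$ orthogonally and writing $x_2=x_2'+\xi$ accordingly, the inclusions just established give $x^\top N x=\begin{bmatrix}x_1\\x_2'\end{bmatrix}^\top N\begin{bmatrix}x_1\\x_2'\end{bmatrix}$ and $x^\top M x=\begin{bmatrix}x_1\\x_2'\end{bmatrix}^\top M\begin{bmatrix}x_1\\x_2'\end{bmatrix}$, so both forms see only $(x_1,x_2')$. On the slice $\|x_1\|=1$ the constraint $x^\top N x\geq 0$ confines $x_2'$ to a bounded set ($-N_{22}$ is positive definite on $\im N_{22}$, while the cross term is only linear in $x_2'$), hence $C:=\{(x_1,x_2'):\|x_1\|=1,\ x_2'\in\im N_{22},\ x^\top N x\geq 0\}$ is compact and nonempty; by the vector statement above $(x_1,x_2')\mapsto x^\top M x$ is continuous and strictly positive on $C$, so it attains a minimum $\beta>0$. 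Applying this with $x_1=u$, $x_2=Zu$ for unit vectors $u$ shows $\begin{bmatrix}I\\Z\end{bmatrix}^\top M\begin{bmatrix}I\\Z\end{bmatrix}\geq\beta I$ for every $Z\in\calZ_r(N)$, i.e. $\calZ_r(N)\subseteq\calZ_r(\tilde M)$, and Theorem~\ref{t:nonstrictS-lemma} finishes the proof. The main obstacle is exactly this last step: one must recognize that divergence within $\calZ_r(N)$ is confined to $\ker N_{22}$, use the \emph{strict} hypothesis together with $M_{22}\leq 0$ to obtain $\ker N_{22}\subseteq\ker M_{22}\cap\ker M_{12}$, and then verify that quotienting out this kernel leaves a genuinely compact constraint set on which $x^\top Mx$ stays bounded away from $0$; the Slater condition on $N$ is, as usual, invoked only at the very end through Theorem~\ref{t:nonstrictS-lemma}.
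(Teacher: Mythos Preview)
Your proof is correct and follows the same three–step architecture as the paper's: (i) establish $\ker N_{22}\subseteq\ker M_{22}\cap\ker M_{12}$ from $\calZ_r(N)\subseteq\calZ_r^+(M)$ and $M_{22}\leq 0$; (ii) produce a uniform $\beta>0$ with $\calZ_r(N)\subseteq\calZ_r\big(M-\begin{bmatrix}\beta I&0\\0&0\end{bmatrix}\big)$; (iii) invoke Theorem~\ref{t:nonstrictS-lemma}. The kernel–inclusion argument and the final step are essentially identical to the paper's.

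The only genuine difference is in step (ii). The paper works at the \emph{matrix} level: it supposes no such $\beta$ exists, picks $Z_i\in\calZ_r(N)$ violating the $\beta_i$–shifted inequality, splits $Z_i=Z_i^1+Z_i^2$ along $\calV^\perp\oplus\calV$ with $\calV=\{Z:N_{22}Z=0\}$, shows $\{Z_i^1\}$ is bounded by a Bolzano--Weierstrass/normalization argument, and extracts a limit point contradicting $\calZ_r(N)\subseteq\calZ_r^+(M)$. You instead drop to the \emph{vector} level via Lemma~\ref{l:vectormatrix}, quotient out $\ker N_{22}$ in the second block, and minimize $x^\top Mx$ directly over the compact slice $\{\|x_1\|=1,\ x_2'\in\im N_{22},\ x^\top Nx\geq 0\}$. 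Both arguments exploit exactly the same mechanism—unboundedness of $\calZ_r(N)$ lives entirely in $\ker N_{22}$, and the kernel inclusions make both quadratic forms blind to that direction—but your route is somewhat more direct: it replaces the contradiction/subsequence extraction by a single continuous minimization on an explicitly compact set. The paper's matrix-level decomposition, on the other hand, stays closer to the objects actually appearing in the statement and does not need the detour through Lemma~\ref{l:vectormatrix}. Either way, the Slater hypothesis on $N$ enters only at the very end through Theorem~\ref{t:nonstrictS-lemma}, exactly as you note.
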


\begin{proof}
Both `if' statements are clear, so we focus on the `only if' part. Assume  that $\calZ_r(N) \subseteq \calZ_r^+(M)$. We will first prove that $\ker N_{22} \subseteq \ker M_{22}$ and $\ker N_{22} \subseteq \ker M_{12}$. Let $Z \in \calZ_r(N)$ and $v \in \ker N_{22}$. In addition, select any nonzero vector $w \in \mathbb{R}^q$ and define $\hat{Z} := vw^\top$. Since $N \in \bpi_{q,r}$, we have that $Z+\gamma \hat{Z} \in \calZ_r(N)$ for all $\gamma \in \mathbb{R}$. Therefore $Z+\gamma \hat{Z} \in \calZ_r^+(M)$. We write \vspace{-5pt}
\begin{equation}
\label{ineqZZhat}
\vspace{-5pt}0 < \begin{bmatrix}
I \\ Z + \gamma \hat{Z}
\end{bmatrix}^\top M \begin{bmatrix}
I \\ Z + \gamma \hat{Z}
\end{bmatrix} = \calL(\gamma) + \gamma^2 (v^\top M_{22} v) \: w w^\top,
\end{equation} 
where $\calL(\gamma)$ is a matrix that depends affinely on $\gamma$. This implies that $M_{22} v = 0$. Indeed, if $M_{22}v \neq 0$ then $v^\top M_{22} v < 0$ and we can find a sufficiently large $\gamma \in \mathbb{R}$ that violates \eqref{ineqZZhat}. We conclude that $\ker N_{22} \subseteq \ker M_{22}$. Next, let $v \in \ker N_{22}$ and define $\hat{Z} := - vv^\top M_{21}$. Since $v \in \ker M_{22}$, we can write\vspace{-5pt}
\begin{equation}
\label{ineq2gamma}
\vspace{-5pt}0 < \begin{bmatrix}
I \\ Z + \gamma \hat{Z}
\end{bmatrix}^\top M \begin{bmatrix}
I \\ Z + \gamma \hat{Z}
\end{bmatrix} =  \begin{bmatrix}
I \\ Z 
\end{bmatrix}^\top M \begin{bmatrix}
I \\ Z 
\end{bmatrix} - 2 \gamma \: M_{12} vv^\top M_{21}.
\end{equation}
This implies that $M_{12}v = 0$, for otherwise we can select a sufficiently large $\gamma \in \mathbb{R}$ violating \eqref{ineq2gamma}. Therefore, we conclude that $\ker N_{22} \subseteq \ker M_{12}$. 
Subsequently, we claim that there exists a scalar $\beta > 0$ such that \vspace{-5pt}
\begin{equation}
\label{claimbeta}
\vspace{-5pt}\calZ_r(N) \subseteq \calZ_r^+\left(M-\begin{bmatrix}
\beta I & 0 \\ 0 & 0
\end{bmatrix}\right).
\end{equation} 
Suppose on the contrary that this claim is false. Then there exists a sequence $\{\beta_i\}$ such that $\beta_i \to 0$ ($i\to \infty$) and for all $i$ there exists $Z_i \in \calZ_r(N)$ such that \vspace{-5pt}
\begin{equation}
\label{ZinotinZr}
\vspace{-5pt}Z_i \not\in \calZ_r^+\left(M-\begin{bmatrix}
\beta_i I & 0 \\ 0 & 0
\end{bmatrix}\right).
\end{equation}
Define $\calV := \{Z \in \mathbb{R}^{r \times q} \mid N_{22} Z = 0\}$. Write $Z_i$ as $Z_i = Z_i^1 + Z_i^2$ where $Z_i^1 \in \calV^\perp$ and $Z_i^2 \in \calV$. Since $\ker N_{22} \subseteq \ker N_{12}$ we see that $Z_i^1 \in \calZ_r(N)$ for all $i$. Next, we claim that $\{Z_i^1\}$ is bounded. We will prove this by contradiction. Thus, assume that $\{Z_i^1\}$ is unbounded. Clearly, the sequence $\left\{\frac{Z_i^1}{\norm{Z_i^1}}\right\}$ is bounded. By Bolzano-Weierstrass, it thus has a convergent subsequence with limit, say $Z_*$. Note that\vspace{-5pt}
\[\vspace{-5pt}\frac{1}{\norm{Z_i^1}^2}(N_{11} + N_{12} Z_i^1 + (N_{12} Z_i^1)^\top + (Z_i^1)^\top N_{22} Z_i^1) \geq 0. 
\]
By taking the limit along the subsequence as $i \to \infty$, we obtain $Z_*^\top N_{22}Z_* \geq 0$. Using the fact that $N_{22} \leq 0$, we conclude that $Z_* \in \calV$. Since $Z_i^1 \in \calV^\perp$ for all $i$, also $\frac{Z_i^1}{\norm{Z_i^1}} \in \calV^\perp$ and thus $Z_* \in \calV^\perp$. Therefore, we conclude that both $Z_* \in \calV$ and $Z_* \in \calV^\perp$. That is, $Z_* = 0$. This is a contradiction as $\frac{Z_i^1}{\norm{Z_i^1}}$ has norm $1$ for all $i$. We conclude that the sequence $\{Z_i^1\}$ is bounded. It thus contains a convergent subsequence with limit, say $Z_*$. Note that $\calZ_r(N)$ is closed and thus $Z_* \in \calZ_r(N)$. Since $\ker N_{22} \subseteq \ker M_{22}$ and $\ker N_{22} \subseteq \ker M_{12}$, \eqref{ZinotinZr} implies that \vspace{-5pt}
\[\vspace{-5pt}
\begin{bmatrix} I \\
Z_i^1 \end{bmatrix}^\top \left(M-\begin{bmatrix}
\beta_i I & 0 \\ 0 & 0 
\end{bmatrix}
\right)\begin{bmatrix} I \\
Z_i^1 \end{bmatrix} \not> 0
\]
for all $i$. We take the limit as $i \to \infty$ along a subsequence with limit $Z_*$, which yields $Z_* \not\in \calZ_r^+(M)$. However, since $Z_* \in \calZ_r(N)$, this contradicts our hypothesis that $\calZ_r(N) \subseteq \calZ_r^+(M)$. Therefore, we conclude that there exists a $\beta > 0$ such that \eqref{claimbeta} holds. In particular, this implies that there exists $\beta > 0$ such that\vspace{-5pt}
\[\vspace{-5pt}\calZ_r(N) \subseteq \calZ_r\left(M-\begin{bmatrix}
\beta I & 0 \\ 0 & 0
\end{bmatrix}\right).\]
Finally, by Theorem~\ref{t:nonstrictS-lemma}, there exists a scalar $\alpha \geq 0$ such that \eqref{ineqalphabeta} holds. 
\end{proof}

Next, we state a matrix Finsler's lemma in the case of a strict inequality. 

\begin{theorem}[Strict matrix Finsler's lemma]
\label{t:strictmatFinsler}
Let $M,N \in  \mathbb{S}^{q+r}$. Then $\calZ_r^0(N) \subseteq \calZ_r^+(M)$
if there exist scalars  $\alpha \in \mathbb{R}$ and $\beta >0$ such that \eqref{ineqalphabeta} holds. Next, assume that $N \in \bpi_{q,r}$, $N\schur N_{22} = 0$ and $M_{22} \leq 0$. Then $\calZ_r^0(N) \subseteq \calZ_r^+(M)$ if and only if there exist $\alpha \geq 0$ and $\beta >0$ such that \eqref{ineqalphabeta} holds.
\end{theorem}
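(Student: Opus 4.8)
The plan is to strip off $\ker N_{22}$ by an orthogonal change of coordinates and then invoke the strict matrix S-lemma of Theorem~\ref{t:strictS-lemmaN22}, which handles the case $N_{22}<0$. The ``if'' direction I would dispatch at once: if \eqref{ineqalphabeta} holds and $Z\in\calZ_r^0(N)$, then, since $\bigl[\begin{smallmatrix}I\\Z\end{smallmatrix}\bigr]^\top N\bigl[\begin{smallmatrix}I\\Z\end{smallmatrix}\bigr]=0$, we have $\bigl[\begin{smallmatrix}I\\Z\end{smallmatrix}\bigr]^\top M\bigl[\begin{smallmatrix}I\\Z\end{smallmatrix}\bigr]=\bigl[\begin{smallmatrix}I\\Z\end{smallmatrix}\bigr]^\top(M-\alpha N)\bigl[\begin{smallmatrix}I\\Z\end{smallmatrix}\bigr]\geq\beta I>0$.

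For the converse I would start by recording that $N\in\bpi_{q,r}$ and $N\schur N_{22}=0$ force, via \eqref{e:expression for ztpiz} and $N_{22}\leq 0$, the equality $\calZ_r(N)=\calZ_r^0(N)$; hence the hypothesis becomes $\calZ_r(N)\subseteq\calZ_r^+(M)$. I would then observe that the opening paragraph of the proof of Theorem~\ref{t:strictS-lemma} applies word for word --- perturbing a member of $\calZ_r(N)$ along directions $Z\mapsto Z+\gamma\hat Z$ whose columns lie in $\ker N_{22}$, and invoking $M_{22}\leq 0$, gives $\ker N_{22}\subseteq\ker M_{22}$ and $\ker N_{22}\subseteq\ker M_{12}$ --- because that argument uses nothing but $N\in\bpi_{q,r}$, $M_{22}\leq 0$ and the strict inclusion, and in particular no positive-eigenvalue (Slater) condition on $N$.

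The heart of the proof is the reduction. I would take orthonormal bases: columns of $E$ spanning $\ker N_{22}$ and columns of $F$ spanning $\im N_{22}$, so $[F\ E]$ is $r\times r$ orthogonal, and set $s:=\dim\ker N_{22}$. From the kernel inclusions, $M_{12}E=M_{22}E=0$, while $N\in\bpi_{q,r}$ gives $N_{12}E=N_{22}E=0$; so the orthogonal congruences of $M$ and of $N$ by $\diag(I_q,[F\ E])$ both vanish in the last $s$ coordinates. Deleting that block yields $\bar M,\bar N\in\S{q+(r-s)}$ with $\bar N=\bigl[\begin{smallmatrix}N_{11}&N_{12}F\\F^\top N_{21}&F^\top N_{22}F\end{smallmatrix}\bigr]$ and $\bar M$ analogous; I would check that $\bar N\in\bpi_{q,r-s}$ with $\bar N_{22}=F^\top N_{22}F<0$, and that $\calZ_r(N)\subseteq\calZ_r^+(M)$ holds exactly when $\calZ_{r-s}(\bar N)\subseteq\calZ_{r-s}^+(\bar M)$ does, the last $s$ rows of $Z$ being free and annihilated on both sides. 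Then Theorem~\ref{t:strictS-lemmaN22} --- whose ``only if'' direction requires only $\bar N\in\bpi$ and $\bar N_{22}<0$ --- supplies an $\alpha\geq 0$ with $\bar M-\alpha\bar N>0$. To finish, I would apply the same congruence to $M-\alpha N$: it equals $\diag(\bar M-\alpha\bar N,0_s)$, since $(M-\alpha N)_{12}E=(M-\alpha N)_{22}E=0$ too; hence $M-\alpha N\geq 0$ and $\ker(M-\alpha N)=\{0\}^q\times\ker N_{22}\subseteq\{0\}^q\times\R^r=\ker\bigl[\begin{smallmatrix}I_q&0\\0&0\end{smallmatrix}\bigr]$. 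A positive semidefinite matrix whose kernel lies in that subspace dominates $\bigl[\begin{smallmatrix}\beta I_q&0\\0&0\end{smallmatrix}\bigr]$ for every small enough $\beta>0$, say $\beta=\lambda_{\mathrm{min}}(\bar M-\alpha\bar N)$, which is precisely \eqref{ineqalphabeta}.

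The step I expect to fight with is the reduction: one must be careful that deleting the common zero block turns $\calZ_r(N)\subseteq\calZ_r^+(M)$ into a genuinely equivalent lower-dimensional inclusion, and this rests entirely on $\ker N_{22}\subseteq\ker M_{22}\cap\ker M_{12}$. Those inclusions do not hold for an arbitrary $M$ with $M_{22}\leq 0$; they are forced here only by the strict inclusion $\calZ_r(N)\subseteq\calZ_r^+(M)$, and without them the ``dead'' block of the congruence of $M$ would not vanish. By contrast, everything after the appeal to Theorem~\ref{t:strictS-lemmaN22}, in particular the passage from $\bar M-\alpha\bar N>0$ to \eqref{ineqalphabeta}, is routine once $\ker(M-\alpha N)$ has been pinned down.
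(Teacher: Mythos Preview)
Your proof is correct but follows a genuinely different route from the paper. The paper uses the shear transformation $T=\bigl[\begin{smallmatrix}I&0\\-N_{22}^\dagger N_{21}&I\end{smallmatrix}\bigr]$, which block-diagonalizes $N$ (to $\diag(0,N_{22})$, since $N\schur N_{22}=0$) and brings $M$ into a form whose $(1,1)$ block is $\Theta=\bigl[\begin{smallmatrix}I\\-N_{22}^\dagger N_{21}\end{smallmatrix}\bigr]^\top M\bigl[\begin{smallmatrix}I\\-N_{22}^\dagger N_{21}\end{smallmatrix}\bigr]$. Since $-N_{22}^\dagger N_{21}\in\calZ_r^0(N)\subseteq\calZ_r^+(M)$, one has $\Theta>0$; one then picks $\beta>0$ with $\Theta-\beta I>0$ and uses a Schur complement with respect to $\Theta-\beta I$ to show that, thanks to $\ker N_{22}\subseteq\ker M_{22}\cap\ker M_{12}$, a sufficiently large $\alpha\geq 0$ makes the $(2,2)$ Schur complement nonnegative.

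Your approach instead quotients out $\ker N_{22}$ by an orthogonal congruence, reducing to a pair $(\bar M,\bar N)$ with $\bar N_{22}<0$, and then invokes Theorem~\ref{t:strictS-lemmaN22} as a black box; the lift back is then a simple congruence argument. This is cleaner in that it recycles the already-established strict matrix S-lemma rather than redoing a Schur-complement computation, and it makes transparent why the kernel inclusions $\ker N_{22}\subseteq\ker M_{22}\cap\ker M_{12}$ are exactly what is needed. The paper's proof, on the other hand, is self-contained (it does not call Theorem~\ref{t:strictS-lemmaN22}) and more constructive, pinning down $\beta$ directly from $\Theta$ rather than from the reduced problem. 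Both rest on the same preliminary step, namely extracting the kernel inclusions from $\calZ_r(N)\subseteq\calZ_r^+(M)$ via the perturbation argument borrowed from Theorem~\ref{t:strictS-lemma}.
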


\begin{proof} 
The `if' statements are obvious. To prove the `only if' statement, assume that $\calZ_r^0(N) \subseteq \calZ_r^+(M)$. Since $N \schur N_{22} = 0$ and $N \in \bpi_{q,r}$, we have that $N \leq 0$. This implies that $\calZ_r^0(N) = \calZ_r(N)$. Therefore, we also have that $\calZ_r(N) \subseteq \calZ_r^+(M)$. We can thus use the same argument as in the proof of Theorem~\ref{t:nonstrictS-lemma} to show that $\ker N_{22} \subseteq \ker M_{22}$ and $\ker N_{22} \subseteq \ker M_{12}$. Let $T\!\! := \!\begin{bmatrix}I & 0 \\ -N_{22}^\dagger N_{21}\!\!\!\! & I \end{bmatrix}$ and $\Theta$ as~\eqref{eq:def Theta}, then
$$
T^\top N T = \begin{bmatrix}
0 & 0 \\ 0 & N_{22}
\end{bmatrix} \text{ and } T^\top M T = \begin{bmatrix}
\Theta & M_{12} -N_{12} N_{22}^\dagger M_{22} \\
M_{21}-M_{22} N_{22}^\dagger N_{21} & M_{22}
\end{bmatrix}.
$$
Since $-N_{22}^\dagger N_{21} \in \calZ_r^0(N) \subseteq \calZ_r^+(M)$ we have $\Theta > 0$. Then obviously, there exists a real $\beta > 0$ so that $\Theta - \beta I > 0$. We have that
\begin{equation*}
T^\top \left(M-\alpha N - \begin{bmatrix}
\beta I & 0 \\ 0 & 0
\end{bmatrix} \right) T = \begin{bmatrix}
\Theta - \beta I & M_{12} -N_{12} N_{22}^\dagger M_{22} \\
M_{21}-M_{22} N_{22}^\dagger N_{21} & M_{22} - \alpha N_{22}
\end{bmatrix}.
\end{equation*}
Therefore it holds that $T^\top \left( M-\alpha N - \begin{bmatrix} \beta I & 0 \\ 0 & 0 \end{bmatrix} \right) T \geq 0$ if and only if 
\begin{equation}
\label{strictineqM22N22}
M_{22} - \alpha N_{22} - (M_{21}-M_{22} N_{22}^\dagger N_{21}) \: (\Theta-\beta I)^{-1} (M_{12} -N_{12} N_{22}^\dagger M_{22}) \geq 0. 
\end{equation}

Because $N_{22} \leq 0$, $\ker N_{22} \subseteq \ker M_{22}$ and $\ker N_{22} \subseteq \ker M_{12}$, there exists a sufficiently large $\alpha \geq 0$ such that \eqref{strictineqM22N22} holds. This proves the statement.
\end{proof}

Finally, we note that it is possible to combine the strict versions of the matrix S-lemma and Finsler's lemma, Theorems~\ref{t:strictS-lemma} and \ref{t:strictmatFinsler}, into one result. This results in the following corollary. Note the absence of the Slater condition on $N$. 

\begin{corollary}
\label{c:combinedstrictS-lemmaFinslerslemma}
Let $M,N \in  \mathbb{S}^{q+r}$. Then $\calZ_r(N) \subseteq \calZ_r^+(M)$
if there exist scalars  $\alpha \geq 0$ and $\beta >0$ such that \eqref{ineqalphabeta} holds. Next, assume that $N \in \bpi_{q,r}$ and $M_{22} \leq 0$. Then $\calZ_r(N) \subseteq \calZ_r^+(M)$ if and only if there exist $\alpha \geq 0$ and $\beta >0$ such that \eqref{ineqalphabeta} holds.
\end{corollary}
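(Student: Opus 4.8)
The plan is to deduce the result from the two theorems already established, Theorem~\ref{t:strictS-lemma} and Theorem~\ref{t:strictmatFinsler}, by distinguishing whether or not $N$ satisfies the Slater condition. The first (`if') assertion is immediate: the opening statement of Theorem~\ref{t:strictS-lemma} says, with no structural hypotheses on $M$ and $N$ whatsoever, that feasibility of \eqref{ineqalphabeta} with $\alpha\geq 0$ and $\beta>0$ already forces $\calZ_r(N)\subseteq\calZ_r^+(M)$. So the work is entirely in the `only if' direction, under the standing assumptions $N\in\bpi_{q,r}$ and $M_{22}\leq 0$.

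For that direction I would first record a simple dichotomy for matrices $N\in\bpi_{q,r}$. By the congruence factorization \eqref{e:gen-schur}, $N$ has the same inertia as $\bdiag(N\schur N_{22},N_{22})$; since $N_{22}\leq 0$ and $N\schur N_{22}\geq 0$, this shows that $N$ has a positive eigenvalue precisely when $N\schur N_{22}\neq 0$, and in the complementary case $N\schur N_{22}=0$ the same factorization gives $N\leq 0$. When $N\leq 0$, for any $Z\in\calZ_r(N)$ the quantity $\begin{bmatrix} I\\ Z\end{bmatrix}^\top N\begin{bmatrix} I\\ Z\end{bmatrix}$ is simultaneously $\geq 0$ and $\leq 0$, hence zero, so that $\calZ_r(N)=\calZ_r^0(N)$.

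Now split into two cases. If $N$ has at least one positive eigenvalue, then all hypotheses of Theorem~\ref{t:strictS-lemma} ($N\in\bpi_{q,r}$, $M_{22}\leq 0$, and the Slater condition on $N$) are met, and its `only if' part directly yields scalars $\alpha\geq 0$ and $\beta>0$ satisfying \eqref{ineqalphabeta}. If $N$ has no positive eigenvalue, then by the dichotomy $N\schur N_{22}=0$ and $\calZ_r(N)=\calZ_r^0(N)$, so the hypothesis $\calZ_r(N)\subseteq\calZ_r^+(M)$ is exactly $\calZ_r^0(N)\subseteq\calZ_r^+(M)$; since moreover $N\in\bpi_{q,r}$, $N\schur N_{22}=0$ and $M_{22}\leq 0$, Theorem~\ref{t:strictmatFinsler} applies and again produces $\alpha\geq 0$ and $\beta>0$ with \eqref{ineqalphabeta}. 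This exhausts the cases and finishes the proof.

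There is no real analytic difficulty here; the only point requiring care is the observation that, for $N\in\bpi_{q,r}$, failure of the Slater condition is equivalent to $N\schur N_{22}=0$ (and hence to $N\leq 0$). This is precisely what lets the Slater hypothesis of Theorem~\ref{t:strictS-lemma} and the equality hypothesis $N\schur N_{22}=0$ of Theorem~\ref{t:strictmatFinsler} jointly cover all of $\bpi_{q,r}$, so that no new argument beyond these two theorems is needed.
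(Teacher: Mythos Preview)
Your proof is correct and follows essentially the same approach as the paper: both split into the two cases where $N$ has a positive eigenvalue (invoking Theorem~\ref{t:strictS-lemma}) and where $N\leq 0$, equivalently $N\schur N_{22}=0$ (invoking Theorem~\ref{t:strictmatFinsler}). You provide a bit more justification for the dichotomy via the congruence \eqref{e:gen-schur}, whereas the paper states it parenthetically, but the argument is otherwise identical.
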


\begin{proof}
Once again, the `if' parts are clear. To prove the `only if' statement, we distinguish the cases that $N$ has at least one positive eigenvalue, and $N \leq 0$ (equivalently, $N \schur N_{22} = 0$). In the first case, Theorem~\ref{t:strictS-lemma} is directly applicable, resulting in the existence of $\alpha \geq 0$ and $\beta >0$ such that \eqref{ineqalphabeta} holds. In the second case, $\calZ_r(N) = \calZ_r^0(N)$ and Theorem~\ref{t:strictmatFinsler} yields $\alpha \geq 0$ and $\beta >0$ satisfying \eqref{ineqalphabeta}. 
\end{proof}

\subsection{Comparison with related work}
\label{s:comparison}

\subsubsection{Full block S-procedure and LMI relaxations}

In this section, we point out a relation between Theorem~\ref{t:strictS-lemmaN22} and the literature on LMI relaxations in robust control, see \cite{Scherer2005,Scherer2006,Scherer2006b}. First of all, we acknowledge that these references are able to deal with more general uncertainty sets than those defined by quadratic matrix inequalities (i.e., the set $\calZ_r(N)$). Nevertheless, we can apply the general theory of \cite{Scherer2005} to obtain a matrix S-lemma with a strict inequality on $M$. To obtain the result, we substitute $A = 0$, $B = I$, $W(x) = -M$ and \vspace{-5pt}
\[\vspace{-5pt}{\bf\Delta} = \left\{\Delta \mid \begin{bmatrix}
	\Delta \\ I 
	\end{bmatrix}^\top N \begin{bmatrix}
	\Delta \\ I 
	\end{bmatrix}\geq 0\right\}\]
into Equation (1.2) of \cite{Scherer2005}. Then, we combine the full block S-procedure (c.f. \cite{Scherer2001},\cite[p. 367]{Scherer2005}) with the fact that the LMI relaxation of \cite{Scherer2005} is exact for a single full block \cite[Thm. 5.3]{Scherer2005}. This yields the following result. 

\begin{proposition}
\label{p:Scherer}
Let $M,N \in \mathbb{S}^{q+r}$. Assume that $N$ is nonsingular, $N_{11} \geq 0$ and $N_{22} < 0$. Then we have that $\calZ_r(N) \subseteq \calZ_r^+(M)$ if and only if there exists a real $\alpha \geq 0$ such that $M - \alpha N > 0$.
\end{proposition}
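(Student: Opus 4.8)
The plan is to obtain this as a direct corollary of the strict matrix S-lemma, Theorem~\ref{t:strictS-lemmaN22}. The `if' direction requires nothing new: if $M-\alpha N>0$ for some $\alpha\geq 0$ and $Z\in\calZ_r(N)$, then for every nonzero $v$ the vector $\begin{bmatrix} I\\ Z\end{bmatrix}v$ is nonzero, so $v^\top\begin{bmatrix} I\\ Z\end{bmatrix}^\top M\begin{bmatrix} I\\ Z\end{bmatrix}v>\alpha\, v^\top\begin{bmatrix} I\\ Z\end{bmatrix}^\top N\begin{bmatrix} I\\ Z\end{bmatrix}v\geq 0$, whence $Z\in\calZ_r^+(M)$.

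For the `only if' direction the single substantive step is to verify that the hypotheses force $N\in\bpi_{q,r}$. First, $N_{22}<0$ gives $N_{22}\leq 0$ and, since $N_{22}$ is then invertible, $\ker N_{22}=\{0\}\subseteq\ker N_{12}$ and $N_{22}\gi=N_{22}\inv$. Next, using symmetry of $N$ (so $N_{21}=N_{12}^\top$) and $-N_{22}\inv>0$, a one-line Schur-complement computation gives $N\schur N_{22}=N_{11}-N_{12}N_{22}\inv N_{12}^\top=N_{11}+N_{12}(-N_{22}\inv)N_{12}^\top\geq N_{11}\geq 0$. Hence all three defining conditions of $\bpi_{q,r}$ hold, so $N\in\bpi_{q,r}$; I would note in passing that the nonsingularity of the full matrix $N$ is not actually used. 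With $N\in\bpi_{q,r}$ and $N_{22}<0$ in hand, Theorem~\ref{t:strictS-lemmaN22} applies verbatim and yields the required $\alpha\geq 0$ with $M-\alpha N>0$.

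There is essentially no obstacle here: the only real content is the elementary inequality $N\schur N_{22}\geq N_{11}\geq 0$ above, and the rest is a citation of Theorem~\ref{t:strictS-lemmaN22}. The point of stating the proposition is comparative — the assumptions coming out of the framework of \cite{Scherer2005} ($N$ nonsingular, $N_{11}\geq 0$ in place of merely $N\schur N_{22}\geq 0$) are strictly stronger than those of Theorem~\ref{t:strictS-lemmaN22}, so Proposition~\ref{p:Scherer} is subsumed by it.
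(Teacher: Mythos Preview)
Your proposal is correct and follows essentially the same route as the paper: the paper likewise observes that $N_{11}\geq 0$ and $N_{22}<0$ force $N\in\bpi_{q,r}$ (your Schur-complement inequality $N\schur N_{22}\geq N_{11}\geq 0$ makes this explicit), after which Theorem~\ref{t:strictS-lemmaN22} applies directly. Your remark that nonsingularity of $N$ is never used is also made in the paper's surrounding discussion.
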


We note that Proposition~\ref{p:Scherer} also follows from Theorem~\ref{t:strictS-lemmaN22} of this paper. In fact, $N_{11} \geq 0$ and $N_{22} < 0$ imply that $N \in \bpi_{q,r}$, making Theorem~\ref{t:strictS-lemmaN22} readily applicable. Theorem~\ref{t:strictS-lemmaN22}, however, is more general in the sense that nonsingularity of $N$ and $N_{11} \geq 0$ are not required in general as long as $N \schur N_{22} \geq 0$.

\subsubsection{Other matrix S-lemmas and Finsler's lemmas}

In the paper \cite{vanWaarde2022}, various matrix S-lemmas were presented. The main contribution of Section~\ref{s:FinslerandSlemma} of this paper is to weaken some of the assumptions of these results, especially the so-called \emph{generalized Slater condition}. We illustrate this by recalling \cite[Thm. 13]{vanWaarde2022}.

\begin{proposition}
\label{p:S-lemmapaper}
Let $M,N \in \mathbb{S}^{q+r}$. Assume that $\ker N_{22} \subseteq \ker N_{12}$, $N_{22} \leq 0$ and $M_{22} \leq 0$. Suppose that $\calZ_r^+(N)$ is nonempty. Then we have that $\calZ_r(N) \subseteq \calZ_r^+(M)$ if and only if there exist $\alpha \geq 0$ and $\beta > 0$ such that \eqref{ineqalphabeta} holds.
\end{proposition}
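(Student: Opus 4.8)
The plan is to derive Proposition~\ref{p:S-lemmapaper} as an immediate corollary of Corollary~\ref{c:combinedstrictS-lemmaFinslerslemma} (or, equivalently, of Theorem~\ref{t:strictS-lemma}); the only point that genuinely needs verification is that the proposition's hypotheses force $N\in\bpi_{q,r}$. I would dispose of the ``if'' direction in one line — it is exactly the first, unconditional, statement of Corollary~\ref{c:combinedstrictS-lemmaFinslerslemma} — and concentrate all the work on the ``only if'' direction.

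For ``only if'', I would first establish $N\in\bpi_{q,r}$. Two of the three defining conditions, $N_{22}\leq 0$ and $\ker N_{22}\subseteq\ker N_{12}$, are assumed outright, and these are precisely what is needed for the generalized Schur factorization \eqref{e:gen-schur}, and the resulting inequality \eqref{e:obs gen schur}, to apply to $N$; this gives
$$
N\schur N_{22}\;\geq\;\begin{bmatrix} I_q\\ Z\end{bmatrix}^\top N\begin{bmatrix} I_q\\ Z\end{bmatrix}\qquad\text{for every }Z\in\R^{r\times q}.
$$
Choosing $Z\in\calZ_r^+(N)$, which exists by hypothesis, makes the right-hand side positive definite, so $N\schur N_{22}>0$, hence $N\schur N_{22}\geq 0$, which is the remaining defining condition; thus $N\in\bpi_{q,r}$. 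I would be careful to invoke \eqref{e:obs gen schur} directly here, rather than Theorem~\ref{t:basic props}~\ref{t:basic props.4}, since the latter presupposes membership in $\bpi_{q,r}$ — which is exactly what is being proved at this stage.

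Having $N\in\bpi_{q,r}$ and the assumed $M_{22}\leq 0$, the hypotheses of Corollary~\ref{c:combinedstrictS-lemmaFinslerslemma} are met, and it delivers $\alpha\geq 0$ and $\beta>0$ with \eqref{ineqalphabeta}, completing the proof. As an alternative last step one could note that, since $N\schur N_{22}>0$ is a $q\times q$ block, the congruence \eqref{e:gen-schur} together with Sylvester's law of inertia shows $N$ has at least $q\geq 1$ positive eigenvalues, so Theorem~\ref{t:strictS-lemma} applies directly. I do not expect a real obstacle anywhere: the proposition is entirely subsumed by the earlier results, and the only substantive observation is that its standing hypothesis $\calZ_r^+(N)\neq\emptyset$ — the generalized Slater condition of \cite{vanWaarde2022} — has been traded for the weaker structural requirement $N\in\bpi_{q,r}$, which does not by itself entail $\calZ_r^+(N)\neq\emptyset$; this is the precise sense in which Corollary~\ref{c:combinedstrictS-lemmaFinslerslemma} strengthens Proposition~\ref{p:S-lemmapaper}.
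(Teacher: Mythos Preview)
Your proposal is correct and follows essentially the same approach as the paper: the paper observes that $N_{22}\leq 0$, $\ker N_{22}\subseteq\ker N_{12}$, and $\calZ_r^+(N)\neq\emptyset$ together force $N\schur N_{22}>0$ (via \eqref{e:expression for ztpiz}/\eqref{e:obs gen schur}), hence $N\in\bpi_{q,r}$, after which Corollary~\ref{c:combinedstrictS-lemmaFinslerslemma} applies directly. Your care in invoking \eqref{e:obs gen schur} rather than Theorem~\ref{t:basic props}\eqref{t:basic props.4} to avoid circularity, and your remark on the alternative route through Theorem~\ref{t:strictS-lemma}, are both sound refinements of what the paper leaves implicit.
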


In \cite{vanWaarde2022}, the nonemptiness of $\calZ_r^+(N)$ was referred to as the generalized Slater condition. This condition, combined with the assumptions that $N_{22} \leq 0$ and $\ker N_{22} \subseteq \ker N_{12}$, implies that $N \schur N_{22} > 0$ (see \eqref{e:expression for ztpiz}). As such, the conditions of Proposition~\ref{p:S-lemmapaper} imply that $N \in \bpi_{q,r}$. Proposition~\ref{p:S-lemmapaper} thus follows immediately from the more general Corollary~\ref{c:combinedstrictS-lemmaFinslerslemma}. The relevance of Corollary~\ref{c:combinedstrictS-lemmaFinslerslemma} is that the condition $N \schur N_{22} > 0$ has been replaced by the less restrictive assumption $N \schur N_{22} \geq 0$. In Section~\ref{sec:appl}, we will illustrate the benefits of this weaker condition.

In addition to \cite[Thm. 13]{vanWaarde2022}, also a matrix S-lemma for non-strict inequalities was presented in \cite[Thm. 9]{vanWaarde2022}. This matrix S-lemma is not directly comparable to Theorem~\ref{t:nonstrictS-lemma} of this paper because it operates under different assumptions. In fact, \cite[Thm. 9]{vanWaarde2022} does not assume that $N \in \bpi_{q,r}$, but does instead assume the generalized Slater condition. For the application of data-driven control that we have in mind, however, the condition $N \in \bpi_{q,r}$ is always satisfied (see Section~\ref{sec:appl}). In this case, the relevance of Theorem~\ref{t:nonstrictS-lemma} lies in the fact that it only assumes the ``ordinary" Slater condition ($N$ has at least one positive eigenvalue), rather than the generalized Slater condition which requires $N$ to have at least $q$ positive eigenvalues.

Finally, in \cite{vanWaarde2021}, a matrix version of Finsler's lemma was provided. This results, however, operates under rather strong conditions such as $M_{11} > 0$ and $M_{12} = 0$. It can be shown that \cite[Thm. 1]{vanWaarde2021}, in fact, follows from Theorem~\ref{t:matFinsler} of this paper.

\subsubsection{Petersen's lemma}

In the recent paper \cite{Bisoffi2021}, the problem of data-driven stabilization is tackled using a result known as \emph{Petersen's lemma}, which originates from \cite{Petersen1987,Petersen1986}. In \cite{Bisoffi2021}, a non-strict and a strict version of Petersen's lemma are stated. These results are relevant for this paper, because they are intimately related to matrix S-lemmas. In fact, as we will see shortly, they can be regarded as matrix S-lemmas for special choices of the matrices $M$ and $N$, given in \eqref{petersenMN}. 

Petersen's lemma, as well as the resulting conditions for data-driven stabilization \cite{Bisoffi2021}, are interesting because they work under different assumptions than the matrix S-lemmas of \cite{vanWaarde2022}. Specifically, the results of \cite{vanWaarde2022} apply to unbounded sets $\calZ_r(N)$ but assume the generalized Slater condition. On the other hand, when considering the results of \cite{Bisoffi2021} in the framework of this paper, the particular noise model \cite[Eq. 8]{Bisoffi2021} and assumption on the data \cite[As. 1]{Bisoffi2021} imply that the resulting set $\calZ_r(N)$ is bounded. However, Petersen's lemma does not require the generalized Slater condition (only the ``ordinary" Slater condition is required in the non-strict Petersen's lemma, while the strict version does not require a Slater condition). 

The results in Section~\ref{s:FinslerandSlemma} of the current paper can be regarded as the best of both worlds. In fact, we will show in Proposition~\ref{p:Petersenslemma} that both the non-strict and the strict Petersen's lemma can be obtained directly from Theorems~\ref{t:nonstrictS-lemma} and \ref{t:strictS-lemmaN22}, respectively. With respect to the literature on Petersen's lemma, we conclude that the results of Section~\ref{s:FinslerandSlemma} are relevant because i) they are more general (i.e., they recover Petersen's lemmas as special cases, but also apply to unbounded sets $\calZ_r(N)$), and ii) they appear \emph{naturally} in data-driven control applications that can often be formulated as implications involving QMI's. In comparison, the application of Petersen's lemma requires a less straightforward reformulation of the set \eqref{ineqAB}, cf.
\cite{Bisoffi2021}.

\begin{proposition}[Petersen's lemma]
\label{p:Petersenslemma}
Consider $C \in \mathbb{R}^{n \times n}$, $E \in \mathbb{R}^{n \times p}$, $\bar{F} \in \mathbb{R}^{q \times q}$ and $G \in \mathbb{R}^{q \times n}$ with $C = C^\top$ and $\bar{F} \geq 0$, and define $\calF := \{F\in \mathbb{R}^{p \times q} \mid F^\top F \leq \bar{F} \}$.
\begin{enumerate}[label=\emph{(\alph*)}, ref=\alph*]
\item\label{p:Petersenslemma.1} The two following statements are equivalent\vspace{-5pt}
\begin{subequations}
\begin{align}\label{strictpetersenineq}
C+EFG+G^\top F^\top E^\top < 0 \:\:\text{ for all } F \in \calF,\\
\label{lambdastrictpetersen}
\textrm{there exists } \lambda>0 \textrm{ s.t. } C+\lambda EE^\top + \lambda^{-1} G^\top \bar{F}G < 0.
\end{align}
\end{subequations}
\item\label{p:Petersenslemma.2} \vspace{-5pt} Suppose that $E \neq 0$, $\bar{F} > 0$ and $G \neq 0$. Then the following are equivalent\vspace{-5pt}
\begin{subequations}
\begin{align}
\label{nonstrictpetersenineq}
C+EFG+G^\top F^\top E^\top \leq 0 \:\: \text{ for all } F \in \calF, \\
\label{lambdanonstrictpetersen}
\textrm{there exists } \lambda>0 \textrm{ s.t. } C+\lambda EE^\top + \lambda^{-1} G^\top \bar{F}G \leq 0.
\end{align}\end{subequations}\end{enumerate}\end{proposition}

\vspace{-5pt}\begin{proof}
First, note that by applying Lemma~\ref{l:A A <= B B 2} to $A = F$ and $B = \bar{F}^\half$, we get \vspace{-5pt}
\[\vspace{-5pt}\{FG \in \mathbb{R}^{p \times n} \!\mid\! F \in \mathbb{R}^{p \times q} \text{ and } F^\top F \leq \bar{F}\} = \{ S\bar{F}^\half G \in \mathbb{R}^{p \times n} \!\mid\! S \in \mathbb{R}^{p \times q} \text{ and } S^\top S \leq I \}.\]
Applying the same lemma with $A = H$ and $B = \bar{F}^\half G$, this set is equal to $\{H \in \mathbb{R}^{p \times n} \mid H^\top H \leq G^\top \bar{F} G \}$. Therefore, by defining the matrices in \eqref{partitionMN} as\vspace{-5pt}
\begin{equation}
\label{petersenMN}
\vspace{-5pt}M := \begin{bmatrix}
-C & -E \\ -E^\top & 0
\end{bmatrix} \text{ and } N := \begin{bmatrix}
G^\top \bar{F} G & 0 \\ 0 & -I
\end{bmatrix},
\end{equation}
we see that \eqref{strictpetersenineq} is equivalent to the inclusion $\calZ_p(N) \subseteq \calZ_p^+(M)$, while \eqref{nonstrictpetersenineq} is equivalent to $\calZ_p(N) \subseteq \calZ_p(M)$. Note that $N \in \bpi_{n,p}$ and $N_{22} < 0$. 

Now, we prove statement \eqref{p:Petersenslemma.1}. By Theorem~\ref{t:strictS-lemmaN22}, $\calZ_p(N) \subseteq \calZ_p^+(M)$ is equivalent to the existence of $\alpha \geq 0$ such that $M-\alpha N > 0$. In fact, $\alpha$ is necessarily positive because $M_{22} = 0$ and $N_{22} < 0$. By using a Schur complement argument, $M-\alpha N > 0$ is equivalent to $-C-\alpha G^\top \bar{F} G -\frac{1}{\alpha} EE^\top > 0$, thus \eqref{lambdastrictpetersen} holds for $\lambda := \frac{1}{\alpha}$.

The proof of statement \eqref{p:Petersenslemma.2} is analogous, but uses Theorem~\ref{t:nonstrictS-lemma} instead of Theorem~\ref{t:strictS-lemmaN22}. Note that the conditions of that theorem are satisfied since $\bar{F} > 0$ and $G \neq 0$ imply that $N$ has at least one positive eigenvalue. Moreover, $M - \alpha N \geq 0$, $E \neq 0$, and $\alpha \geq 0$ imply that $\alpha > 0$. This proves the proposition. 
\end{proof}

\section{Applications to data-driven control}\label{sec:appl}

In this section we apply the results to data-driven control problems. For brevity, we restrict our attention to data-based stabilization. However, we note that the results of this paper are also relevant for other analysis and design problems such as data-based dissipativity analysis \cite{Koch2021,vanWaarde2022b}, $H_2$- and $H_\infty$-control \cite{vanWaarde2022}, and even data-driven reduced order modeling \cite{Burohman2021}. In Section~\ref{sec:stab} we provide a general necessary and sufficient condition for informativity for quadratic stabilization. Subsequently, in Section~\ref{sec:reduction} we provide alternative conditions for quadratic stabilization that are more appealing from a computational point of view. We also provide an explicit formula for the controller. 

\subsection{Data-driven quadratic stabilization}\label{sec:stab}

Using the theory of Section~\ref{s:FinslerandSlemma} we want to understand i) under which conditions $(U_-,X)$ are informative for quadratic stabilization, and ii) how to construct a controller $K$ given informative data. Throughout this section, we will assume that $\Phi \in \bpi_{n,T}$. 
Define the matrices\vspace{-5pt}
\begin{align}
    M :=& 
\begin{pmat}[{|}]
M_{11} & M_{12} \cr\-
M_{21} & M_{22} \cr
\end{pmat} := \small \begin{pmat}[{|.}]
    P & 0 & 0 \cr\-
    0 & -P & -PK^\top \cr
    0 & -KP & -KPK^\top \cr
    \end{pmat}\normalsize\label{defM}  \\ 
N :=& \begin{pmat}[{|}]
N_{11} & N_{12} \cr\- N_{21} & N_{22} \cr
\end{pmat} 
:= \small \begin{pmat}[{.}]
    I & X_+ \cr\- 0 & -X_- \cr 0 & -U_- \cr
    \end{pmat}
    \begin{bmatrix}
    \Phi_{11} & \Phi_{12} \\
    \Phi_{21} & \Phi_{22}
    \end{bmatrix}
    \begin{pmat}[{.}]
    I & X_+ \cr\- 0 & -X_- \cr 0 & -U_- \cr
    \end{pmat}^\top.\normalsize \label{defN}
\end{align}
Recall from Section~\ref{s:motivation} and \eqref{ineqAB} that $\Sigma=\calZ_{n+m}(N)$. Next, suppose that we fix a Lyapunov matrix $P > 0$ and a feedback gain $K$. The inequality \eqref{lyapunovineq} is equivalent to $\begin{bmatrix} A & B \end{bmatrix}^\top \in \calZ_{n +m}^+(M)$.
Then we need to find conditions on the data such that there exist $P >0$ and $K$ for which the inclusion \vspace{-5pt}
\begin{equation} \label{QMI inclusion}
\vspace{-5pt}\calZ_{n +m}(N) \subseteq \calZ_{n +m}^+(M)
\end{equation}
holds. In order to find such conditions, we will apply Corollary~\ref{c:combinedstrictS-lemmaFinslerslemma}. 
To do so, we need to verify its assumptions. In particular, we need to verify that  $N_{22} \leq 0$, $N\schur N_{22} \geq 0$, $\ker N_{22} \subseteq \ker N_{12}$ and  $M_{22} \leq 0$. Note that 
$N_{22} = \begin{bmatrix}X_- \\ U_-\end{bmatrix} \Phi_{22} \begin{bmatrix}X_- \\ U_-\end{bmatrix}^\top \leq 0$ because $\Phi_{22} \leq 0$. This implies that\vspace{-5pt}
\[\vspace{-5pt}\ker N_{22} = \ker \Phi_{22}\begin{bmatrix}
X_- \\ U_-
\end{bmatrix}^\top, \text{ and } \ker N_{12} = \ker \bigg( (\Phi_{12}+X_+ \Phi_{22})\begin{bmatrix}
X_- \\ U_-
\end{bmatrix}^\top \bigg).\]
Since $\ker \Phi_{22} \subseteq \ker \Phi_{12}$, also $\ker N_{22} \subseteq \ker N_{12}$. In addition, \eqref{e:obs gen schur} and $(A_s,B_s) \in \Sigma$ imply $N\schur N_{22}\geq 0$. Since $P >0$, $M_{22} = - \begin{bmatrix}
I \\ K
\end{bmatrix} P \begin{bmatrix}
I \\ K
\end{bmatrix}^\top \leq 0$. Now, Corollary \ref{c:combinedstrictS-lemmaFinslerslemma} asserts that \eqref{QMI inclusion} holds if and only if there exist scalars $\alpha \geq 0$ and $\beta > 0$ such that \vspace{-5pt}
\begin{equation}
\label{ineqMNab}
\vspace{-5pt}M - \alpha N \geq \begin{bmatrix}
\beta I_n & 0 \\ 0  &  0_{(n+m)\times(n+m)}
\end{bmatrix}.
\end{equation}
From a design point of view, the matrices $P$ and $K$ that appear in $M$ are not given. However, the idea is now to \emph{compute} matrices $P$, $K$ and scalars $\alpha$ and $\beta$ such that \eqref{ineqMNab} holds. In fact, by the above discussion, the data $(U_-,X)$ are informative for quadratic stabilization \emph{if and only if} there exists an $n \times n$ matrix $P > 0$, a $K \in \mathbb{R}^{m \times n}$ and two scalars $\alpha \geq 0$ and $\beta > 0$ such that \eqref{ineqMNab} holds. We note that \eqref{ineqMNab} (in particular, $M$) is not linear in $P$ and $K$. Nonetheless, by a rather standard change of variables and a Schur complement argument, we can transform \eqref{ineqMNab} into a linear matrix inequality. 
Moreover, it turns out that the scalar $\alpha$ is necessarily positive. By a scaling argument then, it can be chosen to be equal to 1.  We summarize this in the following theorem.

\begin{theorem}
\label{t:theoremstab}
Let $\Phi \in \bpi_{n,T}$ and consider the data $(U_-,X)$, generated by \eqref{systemprocessnoise} with noise model \eqref{asnoise}. The following statements hold. 
\begin{enumerate}[label=\emph{(\alph*)}, ref=\alph*,leftmargin=*]
\item \label{i:infstab1} The data $(U_-,X)$ are informative for quadratic stabilization if and only if there exists an $n\times n$ matrix $P > 0$, an $L \in \mathbb{R}^{m \times n}$ and a scalar 
$\beta > 0$ satisfying \vspace{-5pt}
\begin{equation}\label{LMIstab}
\vspace{-5pt} \small\begin{bmatrix}
    P-\beta I & 0 & 0 & 0 \\
    0 & -P & -L^\top & 0 \\
    0 & -L & 0 & L \\
    0 & 0 & L^\top & P
    \end{bmatrix} \! - \! \begin{bmatrix}
    I & X_+ \\ 0 & -X_- \\ 0 & -U_- \\ 0 & 0
    \end{bmatrix}\!
    \begin{bmatrix}
    \Phi_{11} & \Phi_{12} \\
    \Phi_{21} & \Phi_{22}
    \end{bmatrix}\!
    \begin{bmatrix}
    I & X_+ \\ 0 & -X_- \\ 0 & -U_- \\ 0 & 0
    \end{bmatrix}^\top \geq 0 \normalsize . 
\end{equation}
\item \label{i:infstab2} Assume, in addition, that $\Phi_{22} < 0$ and
$
\rank \begin{bmatrix}
X_-^\top & U_-^\top
\end{bmatrix}^\top = n+m
$.
Then the data $(U_-,X)$ are informative for quadratic stabilization if and only if there exists an $n\times n$ matrix $P > 0$ and a matrix $L \in \mathbb{R}^{m \times n}$ satisfying \vspace{-5pt}
\begin{equation}\label{LMIstabstrict}
\vspace{-5pt}\small \begin{bmatrix}
    P & 0 & 0 & 0 \\
    0 & -P & -L^\top & 0 \\
    0 & -L & 0 & L \\
    0 & 0 & L^\top & P
    \end{bmatrix} - \begin{bmatrix}
    I & X_+ \\ 0 & -X_- \\ 0 & -U_- \\ 0 & 0
    \end{bmatrix}
    \begin{bmatrix}
    \Phi_{11} & \Phi_{12} \\
    \Phi_{21} & \Phi_{22}
    \end{bmatrix}
    \begin{bmatrix}
    I & X_+ \\ 0 & -X_- \\ 0 & -U_- \\ 0 & 0
    \end{bmatrix}^\top > 0 \normalsize . 
\end{equation}
\end{enumerate}
Moreover, if either \eqref{LMIstab} or \eqref{LMIstabstrict} is feasible then $K := L P^{-1}$ is a stabilizing feedback gain for all $(A,B) \in \Sigma$.
\end{theorem}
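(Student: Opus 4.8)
The plan is to pick up from the discussion immediately preceding the theorem, which has already shown (via Corollary~\ref{c:combinedstrictS-lemmaFinslerslemma}, whose hypotheses $N\in\bpi_{n,T}$ and $M_{22}\leq 0$ are verified there) that the data $(U_-,X)$ are informative for quadratic stabilization if and only if there exist $P>0$, $K$, $\alpha\geq 0$ and $\beta>0$ satisfying \eqref{ineqMNab}. Given this, proving part~\eqref{i:infstab1} amounts to three routine reductions: (i) show $\alpha$ must be positive; (ii) rescale to normalize $\alpha=1$; (iii) change variables and take a Schur complement to recover the LMI \eqref{LMIstab}.

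For (i): if $\alpha=0$, then the lower-right $(n+m)\times(n+m)$ principal block of the matrix in \eqref{ineqMNab} is $M_{22}=-\begin{bmatrix}I\\K\end{bmatrix}P\begin{bmatrix}I\\K\end{bmatrix}^\top$, which would then have to be positive semidefinite; since $P>0$ this is impossible, so $\alpha>0$. For (ii): every entry of $M$ is linear in $P$, so dividing \eqref{ineqMNab} by $\alpha$ replaces $(P,K,\alpha,\beta)$ by $(P/\alpha,K,1,\beta/\alpha)$ without changing $K$. For (iii): writing $L:=KP$, so that $K=LP^{-1}$ (here $P>0$ is used), one computes $M=\begin{bmatrix}P&0&0\\0&-P&-L^\top\\0&-L&-LP^{-1}L^\top\end{bmatrix}$, so \eqref{ineqMNab} with $\alpha=1$ reads $\begin{bmatrix}P-\beta I&0&0\\0&-P&-L^\top\\0&-L&-LP^{-1}L^\top\end{bmatrix}-N\geq 0$. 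The $-LP^{-1}L^\top$ term is precisely the correction produced by taking the Schur complement with respect to the bottom-right block $P$ of the matrix in \eqref{LMIstab} (noting that the product subtracted in \eqref{LMIstab} equals $\begin{bmatrix}N&0\\0&0\end{bmatrix}$), so by the standard Schur complement lemma and $P>0$ this inequality is equivalent to \eqref{LMIstab}. Chaining the equivalences proves \eqref{i:infstab1}; running the chain backwards shows that any feasible $(P,L,\beta)$ gives $\calZ_{n+m}(N)\subseteq\calZ_{n+m}^+(M)$ with $K=LP^{-1}$, which by \eqref{ineqABPK-new} and \eqref{lyapunovineq} is exactly the assertion that $K$ stabilizes every $(A,B)\in\Sigma$; this also yields the ``Moreover'' claim.

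For part~\eqref{i:infstab2}, I would first note that the extra assumptions force $N_{22}=\begin{bmatrix}X_-\\U_-\end{bmatrix}\Phi_{22}\begin{bmatrix}X_-\\U_-\end{bmatrix}^\top<0$. Hence Theorem~\ref{t:strictS-lemmaN22} (rather than Corollary~\ref{c:combinedstrictS-lemmaFinslerslemma}) applies and shows that informativity is equivalent to the existence of $P>0$, $K$ and $\alpha\geq0$ with $M-\alpha N>0$. Step~(i) again gives $\alpha>0$ (the lower-right block of $M-\alpha N$ must be positive definite, which rules out $\alpha=0$ since $M_{22}\leq 0$ is nonzero), and the same rescaling and substitution $L:=KP$ reduce this to $\begin{bmatrix}P&0&0\\0&-P&-L^\top\\0&-L&-LP^{-1}L^\top\end{bmatrix}-N>0$, which by the strict Schur complement lemma (using $P>0$) is equivalent to \eqref{LMIstabstrict}. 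The converse implication and the controller claim follow as in part~\eqref{i:infstab1}, invoking the ``if'' direction of Theorem~\ref{t:strictS-lemmaN22}.

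I do not expect a genuine obstacle here: the substantive work (the membership $N\in\bpi_{n,T}$ and the matrix S-lemma/Finsler machinery) has already been done, and what remains is linear-algebraic. The point that needs the most care is bookkeeping -- keeping the four block sizes $n,n,m,n$ aligned through the Schur complement, and making sure the rescaling in step~(ii) is applied to $P$ and $\beta$ but not to $K$, so that the recovered gain is precisely $K=LP^{-1}$.
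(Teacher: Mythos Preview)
Your proposal is correct and follows essentially the same approach as the paper's proof: the paper also invokes Corollary~\ref{c:combinedstrictS-lemmaFinslerslemma} (resp.\ Theorem~\ref{t:strictS-lemmaN22}) for part~\eqref{i:infstab1} (resp.\ \eqref{i:infstab2}), argues $\alpha>0$ from a diagonal block (the paper uses the $(2,2)$ block $-P-\alpha X_-\Phi_{22}X_-^\top\geq 0$, you use the full $M_{22}$ block --- both work), rescales by $1/\alpha$, substitutes $L:=KP$, and takes the Schur complement with respect to the trailing $P$ block. One small slip: you write $N\in\bpi_{n,T}$, but the correct index set is $\bpi_{n,n+m}$.
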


\begin{proof}
We first focus on statement \eqref{i:infstab1}. To prove the `if' part, suppose that there exist $P$, $L$ and $\beta$ satisfying \eqref{LMIstab}. Define $K := L P^{-1}$. By computing the Schur complement of \eqref{LMIstab} with respect to its fourth diagonal block, we obtain \eqref{ineqMNab} with $\alpha = 1$. As such, \eqref{QMI inclusion} holds. We conclude that the data $(U_-,X)$ are informative for quadratic stabilization and $K = LP^{-1}$ is indeed a stabilizing controller for all $(A,B) \in \Sigma$. 

Conversely, to prove the `only if' part, suppose that the data $(U_-,X)$ are informative for quadratic stabilization. This means that there exist $P > 0$ and $K$ such that \eqref{QMI inclusion} holds. By Corollary~\ref{c:combinedstrictS-lemmaFinslerslemma} there exist $\alpha \geq 0$ and $\beta >0$ satisfying \eqref{ineqMNab}. Then, by defining $L := KP$ and using a Schur complement argument, we conclude that\vspace{-5pt}
\begin{equation}
\label{conditionstabwithalpha}
\vspace{-5pt}\small \begin{bmatrix}
    P-\beta I & 0 & 0 & 0 \\
    0 & -P & -L^\top & 0 \\
    0 & -L & 0 & L \\
    0 & 0 & L^\top & P
    \end{bmatrix} - \alpha \begin{bmatrix}
    I & X_+ \\ 0 & -X_- \\ 0 & -U_- \\ 0 & 0
    \end{bmatrix}
    \begin{bmatrix}
    \Phi_{11} & \Phi_{12} \\
    \Phi_{21} & \Phi_{22}
    \end{bmatrix}
    \begin{bmatrix}
    I & X_+ \\ 0 & -X_- \\ 0 & -U_- \\ 0 & 0
    \end{bmatrix}^\top \geq 0 \normalsize
\end{equation} 
is feasible. 
Zooming in on the $(2,2)$ block, this yields $-P - \alpha X_- \Phi_{22} X_-^\top \geq 0$. 
Since, by assumption, $\Phi_{22} \leq 0$, the inequality $P > 0$ implies $\alpha >0$. As a consequence, by scaling $P$, $L$ and $\beta$ by $\frac{1}{\alpha}$
we may assume that $\alpha =1$, so the LMI \eqref{LMIstab} is feasible.

The proof of statement \eqref{i:infstab2} follows the same steps as the proof of \eqref{i:infstab1}, with the difference that Theorem~\ref{t:strictS-lemmaN22} is invoked instead of Corollary~\ref{c:combinedstrictS-lemmaFinslerslemma}, since the assumptions of \eqref{i:infstab2} imply that $\begin{bmatrix}
X_- \\ U_- 
\end{bmatrix} \Phi_{22} \begin{bmatrix}
X_- \\ U_- 
\end{bmatrix}^\top < 0$. This proves the theorem. 
\end{proof}

Theorem~\ref{t:theoremstab} provides linear matrix inequality conditions for informativity for quadratic stabilization. Such feasibility problems have been studied extensively (see e.g. \cite{Boyd1994}) given their importance in the optimization literature. To be precise, LMI's arise as the constraints in second-order cone programming, or more generally, in semi-definite programming \cite{Boyd1994}. Because of this importance, powerful computational tools such as the solvers Sedumi \cite{Sturm1999} and Mosek have been developed in order to verify LMI feasibility. Theorem~\ref{t:theoremstab}\eqref{i:infstab1} provides a genuine necessary and sufficient condition for quadratic stabilization, in the sense that no additional rank conditions on the data or assumptions on the noise model are needed. From a numerical point of view, however, the strict inequality of Theorem~\ref{t:theoremstab}\eqref{i:infstab2} may be preferred, since LMI solvers are known to be unreliable for LMIs that define feasible sets without interior points. Theorem~\ref{t:theoremstab} is an significant improvement of \cite[Thm. 14]{vanWaarde2022}. In fact, the condition \eqref{conditionstabwithalpha} is exactly the same as in  \cite{vanWaarde2022}, but Theorem~\ref{t:theoremstab} works under weaker assumptions. Indeed, in \cite{vanWaarde2022}, the generalized Slater condition was imposed, which is equivalent to $N$ having $n$ positive eigenvalues. Since $N_{22} \leq 0$, the generalized Slater condition implies that $N\schur N_{22} > 0$. In particular, this implies that the conditions of Corollary~\ref{c:combinedstrictS-lemmaFinslerslemma} are satisfied. It should be clear, however, that $N\schur N_{22} > 0$ is not required in Theorem~\ref{t:theoremstab}. 

The weaker assumptions of Theorem~\ref{t:theoremstab} are especially relevant in the case that $\bmw$ takes its values in an a priori given \emph{subspace} of the state-space, which is the case for noise model~\ref{i:nm4} in Section~\ref{s:motivation}. Indeed, suppose that $w(t) = E \hat{w}(t)$ for all $t = 0,1,2\dots,T-1$, where $\hat{w}(t) \in \mathbb{R}^d$ and $E \in \mathbb{R}^{n \times d}$ is a given matrix of full column rank. The matrix $
\hat{W}_- = \begin{bmatrix}
\hat{w}(0) & \hat{w}(1) & \cdots & \hat{w}(T-1)
\end{bmatrix}$ captures the noise. As before, $\hat{W}_-$ is unknown but is assumed to satisfy $\hat{W}_-^\top \in \calZ_T(\hat{\Phi})$, where $\hat{\Phi} \in \bpi_{d,T}$ is such that $\hat{\Phi}_{22} < 0$. Now, by Theorem~\ref{t:projectionfcr}, $W_- = E\hat{W}_-$ for some $\hat{W}_-^\top \in \calZ_T(\hat{\Phi})$ if and only if $W_-^\top \in \calZ_T(\Phi)$, where\vspace{-5pt}
\begin{equation}\label{PhiwithE}
\vspace{-5pt}\Phi := \begin{bmatrix}
E \hat{\Phi}_{11} E^\top & E \hat{\Phi}_{12} \\
\hat{\Phi}_{21} E^\top & \hat{\Phi}_{22} 
\end{bmatrix} \in \bpi_{n,T}.
\end{equation}
The conclusion is that Theorem~\ref{t:theoremstab} is applicable in cases where the noise is constrained to a known subspace, which is captured by the noise bound \eqref{asnoise} with $\Phi$ in \eqref{PhiwithE}. We note that \cite[Thm. 14]{vanWaarde2022} is generally not applicable in this context. The reason is that the matrix
$\Phi \schur \hat{\Phi}_{22} = E(\hat{\Phi}_{11} - \hat{\Phi}_{12}\hat{\Phi}_{22}^{-1}\hat{\Phi}_{21})E^\top$
can only be positive definite in the case that $d = n$. By \cite[Thm. 3.1]{Dancis1986}, this means that $N$ in \eqref{defN} always has less than $n$ positive eigenvalues whenever $d < n$.

\begin{remark}

We note that Theorem~\ref{t:theoremstab} makes use of state measurements $X$. It is possible to extend this result and to consider data-driven stabilization of input-output systems in autoregressive form $
P_s(\sigma)\bmy(t) = Q_s(\sigma) \bmu(t) + \bmw(t)$, where $\bmu \in \mathbb{R}^m$ is the input, $\bmy \in \mathbb{R}^p$ is the output and $\bmw \in \mathbb{R}^p$ is the noise. Here $P_s$ and $Q_s$ are unknown polynomial matrices and $\sigma$ denotes the shift operator, i.e., $(\sigma \bmf)(t) = \bmf(t+1)$. In this case, the data consist of samples of the inputs and outputs, and the goal is to use these data to construct a controller of the form $G(\sigma) \bmu(t) = F(\sigma) \bmy(t)$,
with $F$ and $G$ polynomial matrices, that renders the interconnected system stable in the sense that all solutions $(\bmu,\bmy)$ to \vspace{-5pt}
\[\vspace{-5pt}\begin{bmatrix}
G(\sigma) & -F(\sigma) \\ -Q_s(\sigma) & P_s(\sigma)
\end{bmatrix}\begin{bmatrix}
\bmu(t) \\ \bmy(t)
\end{bmatrix} = \begin{bmatrix}
0 \\ I
\end{bmatrix} \bmw(t)\]
converge to zero whenever $\bmw = 0$. This can be done by relying on tools for quadratic matrix inequalities, as developed in this paper, and the notion of quadratic difference forms \cite{Willems1998} from behavioral theory. We refer to \cite{vanWaarde2022d} for a detailed analysis of input-output systems. 

\end{remark}

\subsection{Reducing computational complexity}
\label{sec:reduction}
The computational complexity of determining feasibility of an LMI depends on the size of the LMI and the number of unknowns. The LMI \eqref{LMIstab}, together with the constraints $P> 0$ and $\beta > 0$, has size $4n+m+1$ and contains $\frac{n(n+1)}{2}+nm+1$ unknowns. Using Theorem~\ref{t:projectionfcr}, we can separate the computation of the Lyapunov matrix $P$ and the controller $K$. Below it will be shown that this leads to another LMI with size $4n+1$ and $\frac{n(n+1)}{2}+1$ unknowns. This result thus has a significant computational advantage over Theorem~\ref{t:theoremstab}.
\bthe
\label{t:reducedLMIs}
Let $\Phi \in \bpi_{n,T}$ and consider the data $(U_-,X)$, generated by \eqref{systemprocessnoise} with noise model \eqref{asnoise}. Define the matrix $\Theta := \Phi_{12}+X_+\Phi_{22}$. The following statements hold. 
\begin{enumerate}[label=\emph{(\alph*)}, ref=\alph*,leftmargin=*]
\item \label{i:reduction1} The data $(U_-,X)$ are informative for quadratic stabilization if and only if there exists an $n\times n$ matrix $P > 0$ and a scalar 
$\beta > 0$ satisfying \vspace{-10pt}
\begin{subequations}
	\begin{align} 
\small P\!-\!\beta I\!-\!
\bbm
I & X_+
\ebm
\!\Phi\!
\bbm
I \\ X^\top_+
\ebm
+
\Theta
\bbm
X_-\\U_-
\ebm^{\!\top}
\!\!\bigg(
\bbm
X_-\\U_-
\ebm
\Phi_{22}
\bbm
X_-\\U_-
\ebm^{\!\top}
\!\!\bigg)^{\!\!\scalebox{1.2}{$\gi$}}
\bbm
X_-\\U_-
\ebm
\Theta^\top\geq 0, \normalsize \label{e: reduced schur}\\
\small \begin{bmatrix}
    P-\beta I & 0 \\
    0 & -P 
    \end{bmatrix} - \begin{bmatrix}
    I & X_+ \\ 0 & -X_-  
    \end{bmatrix}
    \begin{bmatrix}
    \Phi_{11} & \Phi_{12} \\
    \Phi_{21} & \Phi_{22}
    \end{bmatrix}
    \begin{bmatrix}
    I & X_+ \\ 0 & -X_- 
    \end{bmatrix}^\top \geq 0.\normalsize\label{e: reduced proj} 
\end{align}
\end{subequations}
Moreover, if $P>0$ and $\beta>0$ satisfy \eqref{e: reduced schur} and \eqref{e: reduced proj} then \vspace{-5pt}
\begin{equation}
\label{constructionK}
\vspace{-5pt}K=\big(U_-(\Phi_{22}+\Theta^\top\Gamma\gi\Theta)X_-^\top\big)\big(X_-(\Phi_{22}+\Theta^\top\Gamma\gi\Theta)X_-^\top\big)\gi
\end{equation}
is a stabilizing gain for all $(A,B) \in \Sigma$, where $\Gamma=P-\beta I -
\bbm
I & X_+
\ebm
\Phi
\bbm
I \\ X^\top_+
\ebm$.
\item \label{i:reduction2} Assume, in addition, that $\Phi_{22} < 0$ and $
\rank \begin{bmatrix}
X_-^\top & U_-^\top
\end{bmatrix}^\top = n+m$. Then the data $(U_-,X)$ are informative for quadratic stabilization if and only if there exists an $n\times n$ matrix $P > 0$ satisfying\vspace{-5pt}
\begin{subequations}
\begin{align}
\small P -
\bbm
I & X_+
\ebm
\Phi
\bbm
I \\ X^\top_+
\ebm
+
\Theta
\bbm
X_-\\U_-
\ebm^{\!\top}
\!\!\bigg(
\bbm
X_-\\U_-
\ebm
\Phi_{22}
\bbm
X_-\\U_-
\ebm^{\!\top}
\!\!\bigg)^{-1}
\bbm
X_-\\U_-
\ebm
\Theta^\top > 0,\normalsize \label{e: reduced schur strict}\\
\small\begin{bmatrix}
    P & 0 \\
    0 & -P 
    \end{bmatrix} - \begin{bmatrix}
    I & X_+ \\ 0 & -X_-  
    \end{bmatrix}
    \begin{bmatrix}
    \Phi_{11} & \Phi_{12} \\
    \Phi_{21} & \Phi_{22}
    \end{bmatrix}
    \begin{bmatrix}
    I & X_+ \\ 0 & -X_- 
    \end{bmatrix}^\top > 0.\normalsize\label{e: reduced proj strict}
\end{align}
\end{subequations}
Moreover, if $P>0$ satisfies \eqref{e: reduced schur strict} and \eqref{e: reduced proj strict} then $K$ in \eqref{constructionK} is a stabilizing feedback gain for all systems $(A,B) \in \Sigma$, where $
\Gamma=P -
\bbm
I & X_+
\ebm
\Phi
\bbm
I \\ X^\top_+
\ebm$.
\end{enumerate}
\ethe
\begin{proof}
We first prove \eqref{i:reduction1}. Let $P>0$ be an $n\times n$ matrix and $\beta>0$ be a real number. According to Theorem~\ref{t:theoremstab}, it is enough to show that there exists $L\in\R^{m \times n}$ satisfying \eqref{LMIstab} if and only if \eqref{e: reduced schur} and \eqref{e: reduced proj} are satisfied. By taking the Schur complement of the left hand side in \eqref{LMIstab} with respect to $P$, one can see that there exists $L\in\R^{m \times n}$ satisfying \eqref{LMIstab} if and only if there exists $L\in\R^{m \times n}$ satisfying\vspace{-5pt}
\begin{equation}\label{LMIstab-alt}
\vspace{-5pt}\begin{bmatrix}
    P-\beta I & 0 & 0 \\
    0 & -P & -L^\top  \\
    0 & -L & -LP\inv L^\top
    \end{bmatrix} - \begin{bmatrix}
    I & X_+ \\ 0 & -X_- \\ 0 & -U_- 
    \end{bmatrix}
    \begin{bmatrix}
    \Phi_{11} & \Phi_{12} \\
    \Phi_{21} & \Phi_{22}
    \end{bmatrix}
    \begin{bmatrix}
    I & X_+ \\ 0 & -X_- \\ 0 & -U_- 
    \end{bmatrix}^\top \geq 0.
\end{equation}
By direct inspection, one can verify that \eqref{LMIstab-alt} is equivalent to the following QMI:\vspace{-5pt}
\begin{equation}\label{e:alt qmi}
\vspace{-5pt}\small\bbm
I_n & 0 & 0\\
0 & I_n & 0\\
0 & 0 & I_m\\
0 & 0 & P\inv L^\top 
\ebm^\top
\underbrace{\bbm
\Psi_{11} &\Psi_{12}\\
\Psi_{21} & \Psi_{22}
\ebm}_{:=\Psi}
\bbm
I_n & 0 & 0\\
0 & I_n & 0\\
0 & 0 & I_m\\
0 & 0 & P\inv L^\top
\ebm\geq 0 \normalsize, \textrm{ where }
\end{equation}
\[\Psi_{11}=\small\begin{bmatrix}
    P-\beta I & 0 & 0 \\
    0 & -P & 0 \\
    0 & 0 & 0
    \end{bmatrix}
    -
\begin{bmatrix}
    I & X_+ \\ 0 & -X_- \\ 0 & -U_- 
    \end{bmatrix}
    \begin{bmatrix}
    \Phi_{11} & \Phi_{12} \\
    \Phi_{21} & \Phi_{22}
    \end{bmatrix}
    \begin{bmatrix}
    I & X_+ \\ 0 & -X_- \\ 0 & -U_- 
    \end{bmatrix}^\top\!\!,\,\,\,\normalsize \Psi_{12}=\bbm 0_{n,n} \\ -P \\ 0\ebm, \]
and $\Psi_{22}=-P$. Observe that \eqref{e:alt qmi} is equivalent to \vspace{-5pt}
\begin{equation}\label{e:zero zero K in zset}
\vspace{-5pt}\bbm
0_{n,2n} & P\inv L^\top
\ebm\in\calZ_n(\Psi).
\end{equation}

Now, by defining $W=\bbm I_{2n} & 0_{2n,m}\ebm^\top$ and $Y = 0_{n,2n}$ it follows from Corollary~\ref{cor:elimination} that there exists an $L$ such that \eqref{e:zero zero K in zset} holds if and only if \vspace{-5pt}
\begin{equation}\label{e:condition involving P only}
\vspace{-5pt}\Psi\in\bpi_{2n+m,n}\qand 0_{n,2n}\in\calZ_n(\Psi_W)
\end{equation}
where $\Psi_W$ is defined in \eqref{e:PiW}. Next, we observe that $\Psi\in\bpi_{2n+m,n}$ if and only if $\Psi\schur\Psi_{22}\geq 0$ since $\Psi_{22}=-P<0$. Note that\vspace{-5pt}
\[\vspace{-5pt}\Psi\schur\Psi_{22}=
\small\begin{bmatrix}
    P-\beta I & 0 & 0 \\
    0 & 0 & 0 \\
    0 & 0 & 0
    \end{bmatrix}
    -
\begin{bmatrix}
    I & X_+ \\ 0 & -X_- \\ 0 & -U_- 
    \end{bmatrix}
    \begin{bmatrix}
    \Phi_{11} & \Phi_{12} \\
    \Phi_{21} & \Phi_{22}
    \end{bmatrix}
    \begin{bmatrix}
    I & X_+ \\ 0 & -X_- \\ 0 & -U_- 
    \end{bmatrix}^\top\normalsize. \]
By a Schur complement argument, we see that $\Psi\schur\Psi_{22}\geq 0$ if and only if \eqref{e: reduced schur} holds. Now, observe that $0_{n,2n}\in\calZ_n(\Psi_W)$ if and only if $W^\top\Psi_{11}W\geq 0$. Note that\vspace{-5pt}
\begin{equation}
\label{WPsi11W}
\vspace{-5pt}W^\top\Psi_{11}W=\begin{bmatrix}
    P-\beta I & 0 \\
    0 & -P 
    \end{bmatrix} - \begin{bmatrix}
    I & X_+ \\ 0 & -X_-  
    \end{bmatrix}
    \begin{bmatrix}
    \Phi_{11} & \Phi_{12} \\
    \Phi_{21} & \Phi_{22}
    \end{bmatrix}
    \begin{bmatrix}
    I & X_+ \\ 0 & -X_- 
    \end{bmatrix}^\top.
\end{equation}
Therefore, $0_{n,2n}\in\calZ_n(\Psi_W)$ if and only if \eqref{e: reduced proj} holds. Consequently, the data are informative for quadratic stabilization if and only if there exists an $n\times n$ matrix $P > 0$ and a scalar 
$\beta > 0$ satisfying \eqref{e: reduced schur} and \eqref{e: reduced proj}.

For the construction of the controller, assume that $P>0$ and $\beta>0$ satisfy \eqref{e: reduced schur} and \eqref{e: reduced proj}. By \eqref{WPsi11W}, this implies that $W^\top \Psi_{11} W \geq 0$. This is equivalent to
$- W^\top \Psi_{12} \Psi_{22}^{-1} \Psi_{21} W \leq W^\top (\Psi \schur \Psi_{22}) W$. 

Using Lemma~\ref{l:A A <= B B 2} with $A\! =\! -(-\Psi_{22})^{-\frac{1}{2}} \Psi_{21} W$ and $B = (\Psi\schur \Psi_{22})^\half W$ there exists a matrix $S \in \mathbb{R}^{n \times (2n+m)}$ satisfying $S^\top S \leq I$ and \vspace{-5pt}
\begin{equation}
\label{eqSPsiW}
\vspace{-5pt}-(-\Psi_{22})^{-\frac{1}{2}} \Psi_{21} W = S(\Psi\schur \Psi_{22})^\half W.
\end{equation}
In fact, the matrix $S := -(-\Psi_{22})^{-\frac{1}{2}} \Psi_{21} W \left( (\Psi\schur \Psi_{22})^\half W \right)^\dagger$ works. Since $\Psi_{22} < 0$ and $S^\top S \leq I$, Theorem~\ref{thm:para new} yields $Z := -\Psi_{22}^{-1} \Psi_{21} + (-\Psi_{22})^{-\frac{1}{2}}S(\Psi\schur \Psi_{22})^\half \in \calZ_n(\Psi)$. It follows from \eqref{eqSPsiW} that $ZW = 0$. Now define $K := \begin{bmatrix}
0 & I_m
\end{bmatrix}Z^\top$. Then, by \eqref{e:zero zero K in zset} and Theorem~\ref{t:theoremstab}, $K$ is a stabilizing feedback for all $(A,B) \in \Sigma$. It remains to be shown that $K$ is equal to \eqref{constructionK}. 

First, observe that $Z \begin{bmatrix}
0 \\ I_m
\end{bmatrix} = \Psi_{22}^{-1} \Psi_{21} W \left( (\Psi\schur \Psi_{22})^\half W \right)^\dagger(\Psi\schur \Psi_{22})^\half\begin{bmatrix}
0 \\ I_m
\end{bmatrix}$ because $\Psi_{21} \begin{bmatrix}
0 \\ I_m
\end{bmatrix} = 0$. It can be shown that $\left( (\Psi\schur \Psi_{22})^\half W \right)^\dagger = (\Psi_W \schur \Psi_{22})^\dagger W^\top (\Psi \schur \Psi_{22})^\half$. Then, using the fact that $\Psi_{22}^{-1} \Psi_{21} W = \begin{bmatrix}
0 & I_n
\end{bmatrix}$, this yields\vspace{-5pt}
\[\vspace{-5pt}Z \begin{bmatrix}0 \\ I_m\end{bmatrix} = \begin{bmatrix}0 & I_n\end{bmatrix} (\Psi_W \schur \Psi_{22})^\dagger W^\top (\Psi \schur \Psi_{22}) \begin{bmatrix}0 \\ I_m \end{bmatrix}, \textrm{ which implies }\]
\vspace{-5pt}\begin{equation}
\vspace{-5pt}\label{Kproof}
K = \begin{bmatrix} 0 & I_m \end{bmatrix} (\Psi \schur \Psi_{22}) W (\Psi_W \schur \Psi_{22})^\dagger \begin{bmatrix} 0 \\ I_n \end{bmatrix}.
\end{equation}
Observe that\vspace{-5pt}
\begin{equation}\label{e:K left term}
\vspace{-5pt}\bbm 0 & I_m \ebm
(\Psi\schur\Psi_{22}) W=U_-\bbm \Theta^\top&-\Phi_{22}X_-^\top\ebm.
\end{equation}
Moreover, note that $\Psi_W\schur\Psi_{22}=W^\top(\Psi\schur\Psi_{22})W=
\bbm
\Gamma&\Theta X_-^\top\\
X_-\Theta^\top & \Omega
\ebm$, where $\Omega=-X_-\Phi_{22}X_-^\top$. It follows from \cite[Thm. 2.10]{Tian2005} that\vspace{-5pt}
\begin{equation}\label{e:K right term}
\vspace{-5pt}(\Psi_W\schur\Psi_{22})\gi \bbm 0 \\ I_n\ebm=\bbm \Gamma\gi\Theta X_-^\top\\-I_n\ebm\big(X_-(\Phi_{22}+\Theta^\top\Gamma\gi\Theta)X_-^\top\big)\gi.
\end{equation}
By substituting \eqref{e:K left term} and \eqref{e:K right term} into \eqref{Kproof}, we see that $K$ is equal to \eqref{constructionK}.

The proof of \eqref{i:reduction2} can be established by following the same steps as the proof of \eqref{i:reduction1}, but replaces the term $P-\beta I$ by $P$ and non-strict inequalities by strict ones. Note that in this case, the proof of the if and only if statement relies on Corollary~\ref{cor:strictelimination} rather than Corollary~\ref{cor:elimination} and on Theorem~\ref{t:theoremstab}\eqref{i:infstab2} instead of Theorem~\ref{t:theoremstab}\eqref{i:infstab1}. Also the construction of the controller builds on the results for strict inequalities in Lemma~\ref{l:A A <= B B 2}\ref{item<I} and Theorem~\ref{thm:para new}\eqref{thm:para new.2}, rather than their non-strict counterparts. This proves the theorem.
\end{proof}\vspace{-5pt}

\subsection{Data-driven control of Lur'e systems}

In this section, we will apply some of the results of this paper to the control of Lur'e systems. First, we will explain the classical problem of absolute stability for such systems. Consider the Lur'e system\vspace{-5pt}
\begin{equation}
\vspace{-5pt}\begin{aligned}
\label{Lure}
\bmx(t+1) &= A\bmx(t) + B \bmu(t) + E\varphi(\bmy(t)) \\
\bmy(t) &= C\bmx(t) + D\bmu(t)
\end{aligned}
\end{equation} 
where $\bmx \in \mathbb{R}^n$ is the state, $\bmu \in \mathbb{R}^m$ is the input, $\bmy \in \mathbb{R}$ is the output and  $\varphi : \mathbb{R} \to \mathbb{R}$ is a (nonlinear) function satisfying the so-called \emph{sector condition}\vspace{-5pt}
\begin{equation}
\label{sector}
\vspace{-5pt}\varphi(y) (\varphi(y)-2y) \leq 0 \quad \forall y \in \mathbb{R}.
\end{equation}
We note that more general sector conditions can be transformed to \eqref{sector} by means of loop transformations \cite{Boyd1994}. The real matrices $A, B, C, D$ and $E$ are of appropriate dimensions. Suppose that we apply a state feedback controller $\bmu = K \bmx$ resulting in 
\begin{equation}
\label{Lureclosed}
\begin{aligned}
\bmx(t+1) &= (A+BK)\bmx(t) + E\varphi(\bmy(t)) \\
\bmy(t) &= (C+DK)\bmx(t).
\end{aligned}
\end{equation}
For systems of the form \eqref{Lureclosed}, a problem with a rich history is that of \emph{absolute stability}, i.e. global asymptotic stability of the equilibrium point $0$ \emph{for all} sector-bounded nonlinearities, see \cite{Boyd1994} and the references therein. We focus on showing absolute stability of \eqref{Lureclosed} by means of a quadratic Lyapunov function $V(x) := x^\top P x$ where $P = P^\top > 0$. We thus want that $V(x(t+1)) < V(x(t))$ for all sector-bounded nonlinearities $\varphi$ and all nonzero $x(t)$ and resulting $x(t+1)$ satisfying \eqref{Lureclosed}. We will mimic the continuous-time setting of \cite[Ch. 5]{Boyd1994}. Let $A_K := A+BK$ and $C_K := C+DK$. Then it can be shown \cite{vanWaarde2021} that proving absolute stability of \eqref{Lureclosed} by a quadratic Lyapunov function boils down to finding $P = P^\top > 0$ such that \vspace{-5pt}
\begin{equation}
\label{LMIabsstab}
\begin{bmatrix}
P - A_K^\top P A_K & -A_K^\top P E-C_K^\top \\ -E^\top P A_K-C_K & 1-E^\top P E 
\end{bmatrix} > 0.
\end{equation} 
Next, we focus on data-based stabilization of Lur'e systems. Consider the system \vspace{-5pt}
\begin{equation}
\label{Lure2}
\begin{aligned}
\bmx(t+1) &= A_s\bmx(t) + B_s \bmu(t) + E \varphi(\bmy(t)) +\bmw(t) \\
\bmy(t) &= C_s \bmx(t) + D_s \bmu(t) +\bmv(t),
\end{aligned}
\end{equation}
where $A_s, B_s, C_s$ and $D_s$ are unknown real matrices and the matrix $E$ is known. The signals $\bmw$ and $\bmv$ are process and measurement noise terms that are unknown. We obtain state and input measurements from \eqref{Lure2}, collected in the matrices $X$ and $U_-$ as in Section~\ref{sec:stab}, in addition to corresponding measurements of the form $Y_- = \begin{bmatrix}
y(0) & y(1) & \cdots y(T-1)
\end{bmatrix}$ and $F_- = \begin{bmatrix}
\varphi(y(0)) & \varphi(y(1)) & \cdots & \varphi(y(T-1)) 
\end{bmatrix}$. During the experiment, the noise samples\vspace{-5pt}
\[W_- := \begin{bmatrix}
w(0) & w(1) & \cdots & w(T-1) \\
v(0) & v(1) & \cdots & v(T-1)
\end{bmatrix}\]
are assumed to satisfy $W_-^\top \in \mathcal{Z}_T(\Phi)$ for some known matrix $\Phi \in \bpi_{n+1,T}$. If we define $X_+$ and $X_-$ as before then all systems $(A,B,C,D)$ explaining the data are given by the set $\Sigma$ defined by \vspace{-5pt}
\[\Sigma \!:=\! \set{(A,B,C,D)}{ \begin{bmatrix} X_+-EF_- \\ Y_- \end{bmatrix} \!-\! \begin{bmatrix}
A & B \\ C & D
\end{bmatrix} \begin{bmatrix}
X_- \\ U_-
\end{bmatrix} = W_- \:\: \text{ for some } W_-^\top \in \mathcal{Z}_T(\Phi)}.\]
This leads to the following definition of informative data. 
\begin{definition}
Let $\Phi \in \bpi_{n+1,T}$. Suppose that the data $(U_-,F_-,X,Y_-)$ have been generated by \eqref{Lure2} for some noise sequence $W_-^\top \in \mathcal{Z}_T(\Phi)$. Then $(U_-,F_-,X,Y_-)$ are called \emph{informative for absolute quadratic stabilization} if there exists an $n \times n$ matrix $P > 0$ and a $K \in \mathbb{R}^{m \times n}$ such that \eqref{LMIabsstab} holds for all $(A,B,C,D) \in \Sigma$. 
\end{definition}
Next, we state the following theorem that gives a necessary and sufficient condition for informativity for absolute quadratic stabilization. 
\begin{theorem}\label{thm:lure}
Let $\Phi \in \bpi_{n+1,T}$ and consider the data $(U_-,F_-,X,Y_-)$, generated by \eqref{Lure2} for some $W_-^\top \in \mathcal{Z}_T(\Phi)$. Then $(U_-,F_-,X,Y_-)$ are informative for absolute quadratic stabilization if and only if there exists an $n \times n$ matrix $Q > 0$, an $L \in \mathbb{R}^{m\times n}$ and scalars $\alpha \in \mathbb{R}$ and $\beta > 0$ such that 
\vspace{-10pt}\begin{equation*}
\small\begin{bmatrix}
    Q-\beta I & -E & 0 & 0 & 0 \\
    -E^\top & 1-\beta & 0 & 0 & 0 \\
    0 & 0 & 0 & 0 & Q \\
    0 & 0 & 0 & 0 & L \\
    0 & 0 & Q & L^\top & Q 
    \end{bmatrix}
     +\alpha\! \begin{bmatrix}
I & 0 & X_+-EF_- \\ 0 & 1 & Y_- \\ 0 & 0 & -X_- \\ 0 & 0 & -U_- \\ 0 & 0 & 0
\end{bmatrix}
\!\Phi \!
\begin{bmatrix}
I & 0 & X_+-EF_- \\ 0 & 1 & Y_- \\ 0 & 0 & -X_- \\ 0 & 0 & -U_- \\ 0 & 0 & 0
\end{bmatrix}^\top \!\! \geq  0\normalsize. \label{LMIabsstabdata}
\end{equation*}
In this case, $K := LQ^{-1}$ is such that \eqref{Lureclosed} is absolutely stable for all $(A,B,C,D) \in \Sigma$. 
\end{theorem}
The proof follows similar lines as that of Theorem~\ref{t:theoremstab}. It uses a dualization step \cite[Lem. 4.9]{Scherer1999} on the inequality \eqref{LMIabsstab} and relies on Corollary~\ref{c:combinedstrictS-lemmaFinslerslemma} using the relation $Q = P^{-1}$ between $P$ and $Q$.

	\subsection{The setting of stochastic noise}
	\label{sec:stochastic}
	
	Consider an unknown system\vspace{-5pt}
	\[\vspace{-5pt}\bmx(t+1) = A_s\bmx(t) + B_s \bmu(t) + \bmw(t), \]
	where $\bmx(t), \bmw(t)\in\mathbb{R}^n$, $\bmu(t)\in\mathbb{R}^m$, and the matrices $A_s,B_s$ have the appropriate dimensions. Assume that $\bmw \sim \calN(0,\sigma^2I)$, that is, $\bmw$ is normally distributed with mean $0$ and covariance $\sigma^2I$. We suppose that $\sigma$ is known and that we have access to a finite number of state and input measurements. Clearly, any linear system of the given dimensions $n$ and $m$ could have generated these measurements, yet not every system is a probable explanation. We are interested in characterizing sets of systems that contain the true system with a given probability. For this, we will follow the method of \cite{Umenberger2019}.
	
	Let $X$, $U_-$, $X_-$, and $X_+$ be as before. Clearly, if no noise were present we would have $X_+ = A_sX_- + B_sU_-$, and the true system $(A_s,B_s)$ would be a solution to this inhomogeneous linear equation. However, due to the noise term, we consider its least squares solutions.
	Let $\Vert\cdot\Vert_F$ denote the Frobenius norm of a given matrix and let $(\hat{A},\hat{B})$ be an ordinary least squares estimate of the matrix pair $(A_s,B_s)$, that is, \vspace{-5pt}
	\[ (\hat{A},\hat{B}) \in \argmin_{(A,B)\in\mathbb{R}^{n\times n+m}} \Big\Vert X_+ - \begin{bmatrix} A & B \end{bmatrix}\begin{bmatrix} X_- \\ U_-\end{bmatrix}\Big\Vert_F . \] 
	We can employ the Moore-Penrose inverse to explicitly write a solution of the previous as $\begin{bmatrix} \hat{A} &\hat{B}\end{bmatrix} = X_+ \begin{bmatrix} X_- \\ U_-\end{bmatrix}^\dagger$. This solution is {\em unique} if and only if the matrix $\begin{bmatrix} X_- \\ U_-\end{bmatrix}$ has full row rank $n + m$. Assume this to be the case in the sequel.
	
	
Denote the $\chi$-squared probability distribution with $k$ degrees of freedom by $\chi^2_k$. The paper \cite{Umenberger2019} shows that the vectorization of $\begin{bmatrix} A & B\end{bmatrix}$ is distributed according to $\chi^2_{n(n+m)}$, with as mean the vectorization of the least square estimate $\begin{bmatrix} \hat{A} &\hat{B}\end{bmatrix}$. Using a generalization of the aforementioned confidence intervals for vector variables, we can then formulate confidence intervals for the matrix variable $(A,B)$. Given probability $0<\delta<1$, denote the quantile function for probability $\delta$ of $\chi^2_{n(n+m)}$ by $c_\delta$. Now, define the set\vspace{-5pt}
\[\vspace{-5pt}\Theta_\delta :=\left\lbrace (A,B) \mid \begin{bmatrix} \hat{A}^\top-A^\top \\ \hat{B}^\top-B^\top\end{bmatrix}^\top \begin{bmatrix} X_- \\ U_-\end{bmatrix}\begin{bmatrix} X_- \\ U_-\end{bmatrix}^\top \begin{bmatrix} \hat{A}^\top-A^\top \\ \hat{B}^\top-B^\top\end{bmatrix} \leq \sigma^2c_\delta I \right\rbrace.\]
	
The following lemma was proven in \cite[Lem. 3.1]{Umenberger2019}.
	
\begin{lemma}\label{lem: stoch} \vspace{-3pt}Let $0<\delta<1$. Then with probability $1-\delta$ we have that $(A_s, B_s) \in \Theta_\delta$.
\end{lemma}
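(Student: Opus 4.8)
The plan is to reduce the claim to a standard fact about quadratic forms in a Gaussian vector. Write $Z_- := \begin{bmatrix} X_- \\ U_- \end{bmatrix}$, which by hypothesis has full row rank $n+m$; hence $Z_- Z_-^\dagger = I_{n+m}$ and $Z_-^\dagger = Z_-^\top (Z_- Z_-^\top)^{-1}$. Since the measurements are generated by the true system, $X_+ = \begin{bmatrix} A_s & B_s \end{bmatrix} Z_- + W_-$; combining this with the least-squares formula \eqref{eq:hatZ} yields the error identity
\begin{equation}
\label{eq:errid}
\begin{bmatrix} \hat{A} - A_s & \hat{B} - B_s \end{bmatrix} = W_- Z_-^\top (Z_- Z_-^\top)^{-1}.
\end{equation}
Substituting \eqref{eq:errid} into the quadratic form appearing in the definition of $\Theta_\delta$, evaluated at $(A,B) = (A_s,B_s)$, and abbreviating by $P := Z_-^\top (Z_- Z_-^\top)^{-1} Z_-$ the orthogonal projector onto the row space of $Z_-$ (so that $\rank P = n+m$), I obtain
\begin{equation}
\label{eq:Qform}
Q := \begin{bmatrix} \hat{A}^\top - A_s^\top \\ \hat{B}^\top - B_s^\top \end{bmatrix}^\top Z_- Z_-^\top \begin{bmatrix} \hat{A}^\top - A_s^\top \\ \hat{B}^\top - B_s^\top \end{bmatrix} = W_- P W_-^\top .
\end{equation}
Thus $(A_s,B_s) \in \Theta_\delta$ if and only if $W_- P W_-^\top \leq \sigma^2 c_\delta I_n$, and everything reduces to controlling $W_- P W_-^\top$.

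Next I would pin down the relevant distribution. Because $w(0),\dots,w(T-1)$ are i.i.d.\ $\calN(0,\sigma^2 I_n)$, the entries of $W_-$ are independent, each distributed as $\calN(0,\sigma^2)$; in particular the $n$ \emph{rows} of $W_-$, regarded as column vectors $\rho_1,\dots,\rho_n\in\mathbb{R}^T$, are mutually independent and each distributed as $\calN(0,\sigma^2 I_T)$. Since $\trace(Q) = \trace(W_- P W_-^\top) = \sum_{i=1}^n \rho_i^\top P \rho_i$ and $P$ is an orthogonal projection of rank $n+m$, each term satisfies $\rho_i^\top P \rho_i/\sigma^2 \sim \chi^2_{n+m}$, and by independence
\begin{equation}
\label{eq:chistat}
\tfrac{1}{\sigma^2}\trace(Q) \sim \chi^2_{n(n+m)} .
\end{equation}
(Equivalently: $\operatorname{vec}(W_-) \sim \calN(0,\sigma^2 I_{nT})$ and $\trace(W_- P W_-^\top) = \operatorname{vec}(W_-)^\top (P\otimes I_n)\operatorname{vec}(W_-)$, with $P\otimes I_n$ an orthogonal projection of rank $n(n+m)$.)

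Finally I would pass from \eqref{eq:chistat} back to the semidefinite inequality. Since $Q = W_- P W_-^\top \geq 0$, its eigenvalues are nonnegative, so $\lambda_{\mathrm{max}}(Q)\leq \trace(Q)$ and hence $Q \leq \lambda_{\mathrm{max}}(Q) I_n \leq \trace(Q)\, I_n$. Consequently $\{\trace(Q)\leq \sigma^2 c_\delta\} \subseteq \{Q \leq \sigma^2 c_\delta I_n\} = \{(A_s,B_s)\in\Theta_\delta\}$. Using the defining property of the quantile $c_\delta$, namely $\Pr[\chi^2_{n(n+m)}\leq c_\delta] = 1-\delta$, together with \eqref{eq:chistat}, I conclude
\[
\Pr\!\big[(A_s,B_s)\in\Theta_\delta\big] \;\geq\; \Pr\!\big[\trace(Q)\leq \sigma^2 c_\delta\big] \;=\; 1-\delta .
\]

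The step I expect to be the main obstacle is the distributional identity \eqref{eq:chistat}: one must be careful that it is the $n$ rows (not the $T$ columns) of $W_-$ that furnish the independent standard Gaussian vectors in $\mathbb{R}^T$ --- independence holding both across time, since the $w(t)$ are independent, and across coordinates, since the components of each $w(t)$ are independent --- and one must invoke the classical fact that $\rho^\top P\rho$ is $\sigma^2$ times a $\chi^2_{\rank P}$ variable when $\rho\sim\calN(0,\sigma^2 I)$ and $P$ is an orthogonal projection. A minor subtlety is that the definition of $\Theta_\delta$ uses a matrix inequality $\leq \sigma^2 c_\delta I_n$ rather than the trace, which is why the bound $Q\leq\trace(Q)I_n$ is needed and why one obtains probability \emph{at least} $1-\delta$.
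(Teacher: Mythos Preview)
The paper itself does not prove this lemma; it simply cites \cite[Lem.~3.1]{Umenberger2019}. So there is no in-paper argument to compare against --- you are supplying a proof where the paper defers to an external reference.

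Your argument has a genuine gap at the distributional step \eqref{eq:chistat}. You treat the projection $P = Z_-^\top (Z_- Z_-^\top)^{-1} Z_-$ as if it were deterministic (or independent of $W_-$) when you assert $\rho_i^\top P \rho_i / \sigma^2 \sim \chi^2_{n+m}$. But $P$ is built from $X_-$, and since $x(t+1) = A_s x(t) + B_s u(t) + w(t)$, the state block $X_- = [x(0),\ldots,x(T-1)]$ is a function of $w(0),\ldots,w(T-2)$ --- that is, of all but the last column of $W_-$. Hence $P$ and each row $\rho_i$ of $W_-$ are correlated, and the classical fact ``$\rho^\top P \rho \sim \sigma^2 \chi^2_{\rank P}$ for $\rho\sim\calN(0,\sigma^2 I)$ and $P$ a fixed orthogonal projection'' does not apply. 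Conditioning on $Z_-$ does not rescue the claim either: given $Z_-$ (and the fixed parameters $A_s,B_s$), the first $T-1$ columns of $W_-$ are determined, so the conditional law of $\trace(W_-PW_-^\top)$ has at most $n$ random degrees of freedom, not $n(n+m)$.

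This dependence of regressors on past noise is exactly what separates autoregressive least squares from classical linear regression with fixed design; in finite samples the OLS error does \emph{not} enjoy the exact Gaussian/chi-squared structure you invoke. To make the statement rigorous you must either (i) assume explicitly that $Z_-$ is exogenous/deterministic --- which is presumably the implicit assumption in \cite{Umenberger2019}, and should be stated --- or (ii) replace the exact $\chi^2$ calculation by a self-normalized martingale concentration bound, which yields a different (and generally looser) confidence set than $\Theta_\delta$.
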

	
	\vspace{-3pt}Recall that we are interested in finding a stabilizing controller for the system $(A_s,B_s)$. Suppose that there exists a gain $K$ and a matrix $P>0$ such that the Lyapunov inequality \eqref{lyapunovineq} holds for all $(A,B)\in\Theta_\delta$. Then we can conclude from Lemma~\ref{lem: stoch} that, with probability $1-\delta$, this gain also stabilizes the true system $(A_s,B_s)$. This leads to the following definition:

	
	\begin{definition}\label{def:stoch}
		\vspace{-5pt}Given $0<\delta<1$, the data $(U_-,X)$ are called \emph{informative for quadratic stabilization with probability $1-\delta$} if there exists a feedback gain $K$ and a matrix $P > 0$ such that the Lyapunov inequality \eqref{lyapunovineq} holds for all $(A,B)\in\Theta_\delta$. 
	\end{definition}
	
	It should be stressed that Definition~\ref{def:stoch} is dependent on the construction of $\Theta_\delta$. However, the choice of confidence interval is not unique. In general, there are different sets $\bar{\Theta}$ that also contain $(A_s,B_s)$ with probability $1-\delta$. Each of such sets could lead to a definition analogous to Definition~\ref{def:stoch}. In order to apply the results of this paper, we choose to base our definition on the specific choice of $\Theta_\delta$.
	In the following theorem we will apply the framework of this paper to provide necessary and sufficient conditions for this notion of informativity. \vspace{-5pt}
	
	\begin{theorem}\label{t:stoch}
		Given $0<\delta<1$, and data $(U_-,X)$ such that $\begin{bmatrix} X_- \\ U_-\end{bmatrix}$ has full row rank. Then the data $(U_-,X)$ are informative for quadratic stabilization with probability $1-\delta$ if and only if there exists an $n\times n$ matrix $P > 0$, an $L \in \mathbb{R}^{m \times n}$ and a scalar $\beta > 0$ satisfying \vspace{-5pt}
		\begin{equation}
		\vspace{-5pt}\small\begin{bmatrix}
				P-\beta I\!\!\!\! & 0 & 0 & 0 \\
				0 & \!\!-P & \!\!-L^\top & 0 \\
				0 & \!\!-L & 0 & L \\
				0 & 0 & L^\top & P \\
				\end {bmatrix}\!\! - \!\!
				\begin{bmatrix} I & X_+ \\ 0 & \!-X_- \\ 0 & \!-U_- \\ 0 &0 \end{bmatrix} \begin{bmatrix} 	\sigma^2c_\delta I &0 \\ 0& -\begin{bmatrix} X_- \\ U_-\end{bmatrix}^\dagger\begin{bmatrix} X_- \\ U_-\end{bmatrix}\end{bmatrix} \begin{bmatrix} I & X_+ \\ 0 & \!-X_- \\ 0 & \!-U_- \\ 0 &0 \end{bmatrix}^\top \!\!\geq 0.
				\normalsize\label{LMIstabstoch}
			\end{equation}
			Moreover, if $P$ and $L$ satisfy \eqref{LMIstabstoch} then $K := L P^{-1}$ is a stabilizing feedback gain for $(A_s,B_s)$ with probability at least $1-\delta$.
		\end{theorem}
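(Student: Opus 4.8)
The plan is to recognize the confidence set $\Theta_\delta$ as the solution set of a quadratic matrix inequality of the form \eqref{ineqAB-new} and then to invoke Theorem~\ref{t:theoremstab}. Write $D := \begin{bmatrix} X_- \\ U_- \end{bmatrix}$ and introduce
$$
\Phi_\delta := \begin{bmatrix} \sigma^2 c_\delta I & 0 \\ 0 & -D^\dagger D \end{bmatrix} \in \mathbb{S}^{n+T}.
$$
First I would check that $\Phi_\delta \in \bpi_{n,T}$: the matrix $D^\dagger D$ is an orthogonal projection, so $(\Phi_\delta)_{22} = -D^\dagger D \leq 0$; the inclusion $\ker (\Phi_\delta)_{22} \subseteq \ker (\Phi_\delta)_{12}$ is trivial since $(\Phi_\delta)_{12} = 0$; and $\Phi_\delta \schur (\Phi_\delta)_{22} = \sigma^2 c_\delta I \geq 0$.

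The crux is to show that $\Theta_\delta$ coincides with the set of all $(A,B)$ satisfying \eqref{ineqAB-new} with $\Phi$ replaced by $\Phi_\delta$; equivalently, with $\calZ_{n+m}(N_\delta)$, where $N_\delta$ denotes the matrix \eqref{defN} built from $\Phi_\delta$. To this end, fix $(A,B)$ and set $R := X_+ - A X_- - B U_- = X_+ - \begin{bmatrix} A & B \end{bmatrix} D$. Using \eqref{eq:hatZ} in the form $\begin{bmatrix} \hat{A} & \hat{B} \end{bmatrix} = X_+ D^\dagger$ together with $D D^\dagger D = D$, one obtains $\begin{bmatrix} \hat{A} - A & \hat{B} - B \end{bmatrix} D = X_+ D^\dagger D - \begin{bmatrix} A & B \end{bmatrix} D = R D^\dagger D$; since $D^\dagger D$ is a symmetric idempotent, this yields
$$
\begin{bmatrix} \hat{A}^\top - A^\top \\ \hat{B}^\top - B^\top \end{bmatrix}^\top D D^\top \begin{bmatrix} \hat{A}^\top - A^\top \\ \hat{B}^\top - B^\top \end{bmatrix} = \bigl( R D^\dagger D \bigr)\bigl( R D^\dagger D \bigr)^\top = R\, D^\dagger D\, R^\top .
$$
On the other hand, carrying out the congruence in \eqref{ineqAB-new} with $\Phi_\delta$ gives
$$
\begin{bmatrix} I & X_+ \\ 0 & -X_- \\ 0 & -U_- \end{bmatrix}^\top \begin{bmatrix} I \\ A^\top \\ B^\top \end{bmatrix} = \begin{bmatrix} I \\ R^\top \end{bmatrix},
$$
so the left-hand side of \eqref{ineqAB-new} equals $\sigma^2 c_\delta I - R\, D^\dagger D\, R^\top$. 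Comparing the two displays shows that $(A,B) \in \Theta_\delta$ if and only if $\begin{bmatrix} I \\ A^\top \\ B^\top \end{bmatrix}^\top N_\delta \begin{bmatrix} I \\ A^\top \\ B^\top \end{bmatrix} \geq 0$; that is, $\Theta_\delta = \calZ_{n+m}(N_\delta)$.

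With this identification in hand, Definitions~\ref{def:quad stab} and \ref{def:stoch} say that the data are informative for quadratic stabilization with probability $1-\delta$ if and only if they are informative for quadratic stabilization (in the sense of Definition~\ref{def:quad stab}) for the noise model $\Phi_\delta$. I would then run the proof of Theorem~\ref{t:theoremstab}\eqref{i:infstab1} with $\Phi = \Phi_\delta$: its only use of the standing assumption \eqref{e:assumption} is to secure $N_\delta \schur (N_\delta)_{22} \geq 0$ (hence $N_\delta \in \bpi_{n,n+m}$), and here this follows instead from the unconditional fact that $(\hat{A},\hat{B}) \in \Theta_\delta = \calZ_{n+m}(N_\delta)$, so $\calZ_{n+m}(N_\delta) \neq \emptyset$. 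The theorem then gives that informativity is equivalent to feasibility of \eqref{LMIstab} with $\Phi = \Phi_\delta$ --- which is precisely \eqref{LMIstabstoch} --- and that in the feasible case $K = L P^{-1}$ stabilizes every $(A,B) \in \Theta_\delta$. Finally, Lemma~\ref{lem: stoch} gives $(A_s,B_s) \in \Theta_\delta$ with probability $1-\delta$, so this $K$ stabilizes $(A_s,B_s)$ with probability at least $1-\delta$, which completes the argument.

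I expect the main obstacle to be the bookkeeping in establishing $\Theta_\delta = \calZ_{n+m}(N_\delta)$ --- keeping the pseudo-inverse identities ($D D^\dagger D = D$ and $(D^\dagger D)^\top = D^\dagger D = (D^\dagger D)^2$) and the transpositions straight --- together with the minor subtlety that Theorem~\ref{t:theoremstab} is phrased under the hypothesis that the data obey the noise model, a hypothesis that here must be traded for the unconditional membership $(\hat{A},\hat{B}) \in \Theta_\delta$.
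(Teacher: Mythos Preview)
Your proposal is correct and follows the same overall route as the paper: identify $\Theta_\delta$ with $\calZ_{n+m}(N_\delta)$ for the noise model $\Phi_\delta$, and then invoke Theorem~\ref{t:theoremstab}\eqref{i:infstab1}. Your verification of $\Theta_\delta=\calZ_{n+m}(N_\delta)$ via the residual $R=X_+-\begin{bmatrix}A&B\end{bmatrix}D$ and the identity $\begin{bmatrix}I&X_+\\0&-D\end{bmatrix}^\top\begin{bmatrix}I\\A^\top\\B^\top\end{bmatrix}=\begin{bmatrix}I\\R^\top\end{bmatrix}$ is in fact a bit cleaner than the paper's, which instead expands $N_\delta$ block by block and matches it against the definition of $\Theta_\delta$.

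Two small points worth noting. First, you correctly record only $(\Phi_\delta)_{22}=-D^\dagger D\leq 0$; the paper asserts $(\Phi_\delta)_{22}<0$ from full row rank of $D$, which is only true when $T=n+m$, but since Theorem~\ref{t:theoremstab}\eqref{i:infstab1} merely needs $\Phi_\delta\in\bpi_{n,T}$ this does not affect the argument. Second, your observation that the hypothesis \eqref{e:assumption} underlying Theorem~\ref{t:theoremstab} (used there to obtain $N\schur N_{22}\geq 0$) must here be replaced by the unconditional membership $(\hat A,\hat B)\in\Theta_\delta$ is a genuine subtlety that the paper glosses over; your fix is the right one.
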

		
		\begin{proof} 
		\vspace{-5pt}Let $M$ be defined as in \eqref{defM}. Clearly, the data is informative for quadratic stabilization with probability $1-\delta$ if and only if there exists $P>0$ and $K$ such that $\Theta_\delta \subseteq \calZ_{n+m}^+(M)$. The proof of this theorem will continue as follows. First, we will define a matrix $\Phi\in\bm{\Pi}_{n,T}$. After this, we show that by defining $N$ as in \eqref{defN}, we obtain $\Theta_\delta=Z_{n+m}(N)$. Using this, we will prove the theorem by invoking Theorem~\ref{t:theoremstab}. Let\vspace{-5pt}
		\[ \vspace{-5pt}\Phi := \small\begin{bmatrix}\sigma^2c_\delta I &0 \\ 0& -\begin{bmatrix} X_- \\ U_-\end{bmatrix}^\dagger\begin{bmatrix} X_- \\ U_-\end{bmatrix}\end{bmatrix}\normalsize. \]  
		Since $\begin{bmatrix} X_- \\ U_-\end{bmatrix}$ has full row rank, we see that $\Phi_{22}\leq 0$. Moreover, we can immediately see that $\Phi\schur \Phi_{22} = \sigma^2c_\delta I>0$. We may conclude that $\Phi\in\bm{\Pi}_{n,T}$. Let $N$ be defined as in \eqref{defN}.
			By performing some routine operations regarding the pseudo-inverse, and employing
			the definition of $(\hat{A},\hat{B})$, we can conclude that\vspace{-5pt}
			\[\vspace{-5pt}N= \small\begin{bmatrix} \sigma^2c_\delta I - 
				\begin{bmatrix} \hat{A}^\top \\ \hat{B}^\top \end{bmatrix}^\top\begin{bmatrix} X_- \\ U_-\end{bmatrix}\begin{bmatrix}X_- \\ U_-\end{bmatrix}^\top
				\begin{bmatrix} \hat{A}^\top \\ \hat{B}^\top \end{bmatrix}  & 
				\begin{bmatrix} \hat{A}^\top \\ \hat{B}^\top \end{bmatrix}^\top\begin{bmatrix} X_- \\ U_-\end{bmatrix}\begin{bmatrix}X_- \\ U_-\end{bmatrix}^\top \\\begin{bmatrix} X_- \\ U_-\end{bmatrix}\begin{bmatrix}X_- \\ U_-\end{bmatrix}^\top
				\begin{bmatrix} \hat{A}^\top \\ \hat{B}^\top \end{bmatrix} & -\begin{bmatrix} X_- \\ U_-\end{bmatrix}\begin{bmatrix} X_- \\ U_-\end{bmatrix}^\top \end{bmatrix}. \normalsize
			\]
			
			As such, it can be concluded that $\Theta_\delta = Z_{n+m}(N)$. This shows that informativity for quadratic stabilization with probability $1-\delta$ is equivalent to informativity for quadratic stabilization corresponding to the specific noise model given by $\Phi$. We can now invoke Theorem~\ref{t:theoremstab} to finalize the proof.
		\end{proof}
\section{Conclusions}
\label{s:conclusions}

In this paper we have studied properties of quadratic matrix inequalities. We have established conditions under which the solution sets of such QMI's are nonempty, convex, bounded, or have nonempty interior (Theorem~\ref{t:basic props}). In addition, we have given parameterizations of the solution set of a given QMI, for both strict and nonstrict inequalities (Theorem~\ref{thm:para new}). We have also shown that under suitable conditions all matrices of the form $ZW$, where $Z$ is a solution to a QMI and $W$ is a given matrix, are again solutions to a (different) QMI (Theorems~\ref{t:projectionfcr} and \ref{t:projectionfcrstrict}). Finally, we have established matrix versions of the classical S-lemma and Finsler's lemma, that provide LMI conditions under which all solutions to one QMI also satisfy another QMI (Theorems~\ref{t:nonstrictS-lemma}, \ref{t:matFinsler}, \ref{t:strictS-lemmaN22}, \ref{t:strictS-lemma} and \ref{t:strictmatFinsler}). We have demonstrated that our results generalize previous work, and in particular, do not require the assumption of the generalized Slater condition \cite{vanWaarde2022}. In the case of a strict inequality, we have given a combined matrix S-lemma and Finsler's lemma, that does not even require the ``ordinary" Slater condition (Corollary~\ref{c:combinedstrictS-lemmaFinslerslemma}). It was also shown that both the strict and the nonstrict Petersen's lemma \cite{Petersen1986,Petersen1987} can be obtained from the matrix S-lemmas of this paper (Proposition~\ref{p:Petersenslemma}). 

We have studied applications of the various QMI results in the context of data-driven control. In particular, we have studied the problem of finding a stabilizing controller of an unknown LTI system influenced by noise from a finite set of input-state samples. To this end, we have assumed that the (unknown) matrix of noise samples satisfies a QMI. This led to a set of solutions to a QMI consisting of LTI systems that ``explain" the data. We have applied the combined matrix S-lemma and Finsler's lemma to establish a necessary and sufficient LMI condition under which all systems explaining the data can be quadratically stabilized by a single controller (Theorem~\ref{t:theoremstab}). Subsequently, we have reduced the computational complexity of this scheme by separating the computation of the Lyapunov function and the controller, leading to lower-dimensional LMI's (Theorem~\ref{t:reducedLMIs}). This result also included an explicit formula for a stabilizing controller, given a (pre-computable) Lyapunov function. We then tackled data-based stabilization of a class of Lur'e systems in Theorem~\ref{thm:lure}. Lastly, we applied the methods of this paper in order to derive Theorem~\ref{t:stoch}, which provides probabilistic stabilization guarantees in the presence of Gaussian noise.

So far, we have only applied the QMI results to data-driven stabilization. Nonetheless, we believe that our results are also relevant for other data-based analysis and control problems that involve QMI's. Some examples of this are $H_{2}$ and $H_{\infty}$ control \cite{Berberich2019c,Steentjes2021}, data-driven dissipativity analysis \cite{Koch2021}, as well as data-driven reduced order modeling \cite{Burohman2021}. In particular, the more general matrix S-lemmas of this paper will lead to weaker conditions on the data for these analysis and design problems. Another topic for future research is to develop dedicated algorithms for solving the various linear matrix inequalities in this paper.

\appendix

\section{Facts from matrix theory}\label{s:appendix} We start with the following:

\begin{appxlem}\label{l:A A <= B B 2}
Let $A\in\mathbb{R}^{r\times q}$ and $B\in\mathbb{R}^{p\times q}$. 
\begin{enumerate}[label=(\alph*),ref=(\alph*)]
\item\label{item<=I} $A^\top A\leq B^\top B$ if and only if there exists $S\in\R^{r\times p}$ such that \vspace{-5pt}
\begin{equation}\label{eqsS}
\vspace{-5pt}A=SB \text{ and } S^\top S\leq I.
\end{equation} 
\item\label{item<I} Assume, in addition, that $B$ has full column rank. Then $A^\top A < B^\top B$ if and only if there exists $S\in\R^{r\times p}$ such that \vspace{-5pt}
\begin{equation}\label{eqsSstrict}
\vspace{-5pt}A=SB \text{ and } S^\top S < I.
\end{equation}
\end{enumerate}
Moreover, if $A^\top A - B^\top B \leq 0$ (respectively, $< 0$), then $S := AB^\dagger$ satisfies \eqref{eqsS} (respectively, \eqref{eqsSstrict}).
\end{appxlem}

The proof of Lemma~\ref{l:A A <= B B 2} for the case $r=q=p$ is given in \cite[Fact 5.10.19]{Bernstein2009} and for the case $r=p$ in \cite[Lem. 3]{Rantzer1996}. Here we provide a constructive proof for the case that $r$ and $p$ are not necessarily equal.

\begin{proof} 

To prove the `if' parts of statements \ref{item<=I} and \ref{item<I}, assume that $A = SB$ with $S^\top S \leq I$ (respectively, $< I$). Then $A^\top A = B^\top S^\top SB \leq B^\top B$ (respectively, $< B^\top B$), where we have made use of full column rank of $B$ to prove the strict inequality.

Next, we prove the `only if' part of \ref{item<=I}. We thus assume that $A^\top A \leq B^\top B$. Our goal is to show that $S:=AB^\dagger$ satisfies \eqref{eqsS}. First, note that $A^\top A \leq B^\top B$ implies that $\ker B \subseteq \ker A$, equivalently, $\im A^\top \subseteq \im B^\top$. Thus, there exists a matrix $Z \in \mathbb{R}^{r \times q}$ such that $A^\top = B^\top Z^\top$, equivalently, $A = Z B$. Therefore, $SB = A B^\dagger B = Z B B^\dagger B = Z B = A$. Moreover, $S^\top S = (B^\dagger)^\top A^\top A B^\dagger \leq (B^\dagger)^\top B^\top B B^\dagger = B B^\dagger \leq I$, where the last inequality follows from the fact that $BB^\dagger$ is an orthogonal projection matrix. This shows that $S$ satisfies \eqref{eqsS}. To prove the `only if' part of statement \ref{item<I}, assume that $B$ has full column rank and $A^\top A < B^\top B$. This implies that there exists an $\epsilon > 0$ such that $(1+\epsilon) A^\top A \leq B^\top B$. As such, by statement \ref{item<=I}, the matrix $\bar{S}:= \sqrt{1+\epsilon}AB^\dagger$ satisfies $\sqrt{1+\epsilon} A = \bar{S} B$ and $\bar{S}^\top \bar{S} \leq I$. Define $S := \frac{1}{\sqrt{1+\epsilon}} \bar{S} = AB^\dagger$. Then $A = SB$ and $S^\top S = \frac{1}{1+\epsilon}\bar{S}^\top \bar{S} < I$. We conclude that $S$ satisfies \eqref{eqsSstrict}.
\end{proof}

The following is a direct consequence of \cite[Prop. 6.1.7]{Bernstein2009}.
\begin{appxlem}\label{lem: image-exist} Let $A\in\mathbb{R}^{q\times p}$ and $B\in\mathbb{R}^{r\times p}$. Then $AM=B$ if and only if $\im B \subseteq\im A$ and there exists $T\in\mathbb{R}^{r\times q}$ such that $M = A\gi B+ (I_q-A\gi A)T$. 
\end{appxlem}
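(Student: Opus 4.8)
The plan is to obtain the lemma directly from two elementary Moore--Penrose identities: $AA\gi A = A$, and the fact that $AA\gi$ is the orthogonal projector onto $\im A$, so that for any matrix $C$ of compatible size one has $AA\gi C = C$ if and only if $\im C \subseteq \im A$. Both facts are standard (they are contained in \cite[Prop. 6.1.7]{Bernstein2009}, from which the present lemma can also be quoted directly), so the proof reduces to two short substitutions.

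For the `if' direction, assume $\im B \subseteq \im A$ and $M = A\gi B + (I_q - A\gi A)T$ for some $T\in\R^{r\times q}$. Then
\[
AM = AA\gi B + (A - AA\gi A)T = AA\gi B = B,
\]
where the middle term vanishes because $AA\gi A = A$, and the final equality holds because $AA\gi$ fixes every element of $\im A \supseteq \im B$. Hence $AM = B$.

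For the `only if' direction, suppose $AM = B$. Taking images immediately gives $\im B = \im(AM) \subseteq \im A$, which is the first required conclusion. It remains to produce a parameter $T$ realizing the claimed formula; the key observation is that $M$ itself works. Indeed, from $AM = B$ we get $A\gi B = A\gi A M$, and therefore
\[
A\gi B + (I_q - A\gi A)M = A\gi A M + M - A\gi A M = M,
\]
so $M = A\gi B + (I_q - A\gi A)T$ with $T := M$. Note that this last computation does a little more than required: it shows that \emph{every} solution of $AM = B$ has the stated form, so the formula is a complete parameterization of the solution set and not merely an existence statement. There is no substantial obstacle in this argument; the only point that deserves to be stated carefully is precisely this completeness of the parameterization, which the second display supplies.
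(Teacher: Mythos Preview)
Your proof is correct. The paper does not give its own argument here but simply records the lemma as a direct consequence of \cite[Prop.~6.1.7]{Bernstein2009}; what you have written is precisely the standard two-line verification that citation points to, so your approach coincides with the paper's.
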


\bibliographystyle{siamplain}
\bibliography{references}
\end{document}